\documentclass[11pt]{amsart}
\setcounter{tocdepth}{3}
\usepackage[letterpaper, total={6.5in,9in}, includefoot, centering]{geometry}
\usepackage{amsfonts}
\usepackage{amsmath,amsthm,amssymb,amscd,stmaryrd,mathtools}
\usepackage[foot]{amsaddr}
\usepackage[dvipsnames]{xcolor}

\usepackage[mathscr]{eucal}
\usepackage{latexsym}
\usepackage[new]{old-arrows}
\usepackage{graphics}
\usepackage{verbatim}
\usepackage{upgreek}
\usepackage{overpic}
\usepackage[all,cmtip]{xy} 
\usepackage{enumitem}
\usepackage{tikz}
\usepackage{tikz-cd}
\usepackage[pagebackref]{hyperref}
\renewcommand*\backref[1]{\ifx#1\relax \else (Page #1) \fi}
\usepackage[font=scriptsize]{caption}
\usepackage{upgreek}
\usepackage{algorithm}
\usepackage{algpseudocode}
\makeatletter
\newsavebox{\@brx}
\newcommand{\llangle}[1][]{\savebox{\@brx}{\(\m@th{#1\langle}\)}%
  \mathopen{\copy\@brx\kern-0.5\wd\@brx\usebox{\@brx}}}
\newcommand{\rrangle}[1][]{\savebox{\@brx}{\(\m@th{#1\rangle}\)}%
  \mathclose{\copy\@brx\kern-0.5\wd\@brx\usebox{\@brx}}}
\makeatother
\usepackage{float}
\graphicspath{ {./figures/} }

\usepackage[authoryear,sort&compress]{natbib}
\usepackage{adjustbox}
\bibpunct{[}{]}{;}{n}{,}{,}
\usepackage{cancel}
\usepackage{cleveref}
\hypersetup{
	colorlinks=true,
    linkcolor=red,
    citecolor=blue,
}

\makeatletter \@addtoreset{equation}{section}

\makeatother

\newtheorem{definition}{Definition}[section]
\newtheorem{remark}{Remark}[section]
\newtheorem{example}{Example}[section]
\newtheorem{prop}{Proposition}[section]

\newcommand{\bE}{{\vec{E}}}
\newcommand{\bT}{{\vec{T}}}
\newcommand{\bx}{\mathbf{x}}
\newcommand{\by}{\mathbf{y}}
\newcommand{\bu}{\mathbf{u}}
\newcommand{\bv}{\mathbf{v}}
\newcommand{\bp}{\mathbf{p}}
\newcommand{\bq}{\mathbf{q}}
\newcommand{\boldf}{\mathbf{f}}
\newcommand{\boldF}{\mathbf{F}}
\newcommand{\Em}{\textup{Em}}
\newcommand{\bxx}{{\mathbf{x}\mathbf{x}}}
\newcommand{\bxy}{{\mathbf{x}\mathbf{y}}}
\newcommand{\byx}{{\mathbf{y}\mathbf{x}}}
\newcommand{\byy}{{\mathbf{y}\mathbf{y}}}


\setlength{\parskip}{1ex}
\setlength{\parindent}{0ex}

\usepackage{siunitx}
\DeclareSIUnit\str{str}
\DeclareSIUnit\sh{sh}
\DeclareSIUnit\erg{erg}
\sisetup{per-mode=fraction}

\begin{document}
\title{Preconditioning transformations of adjoint systems for evolution equations}
\author{Brian K. Tran$^1$, Ben S. Southworth$^1$, Hannah F. Blumhoefer$^2$ and Samuel Olivier$^3$}
\address{$^1$Los Alamos National Laboratory, Theoretical Division, Los Alamos, New Mexico 87545}
\address{$^2$Iowa State University, Department of Aerospace Engineering, Ames, Iowa 50011}
\address{$^3$Los Alamos National Laboratory, Computer, Computational and Statistical Sciences Division, Los Alamos, New Mexico 87545}
\email{btran@lanl.gov, southworth@lanl.gov, hfb@iastate.edu, solivier@lanl.gov}
\allowdisplaybreaks

\begin{abstract}
    Achieving robust control and optimization in high-fidelity physics simulations is extremely challenging, especially for evolutionary systems whose solutions span vast scales across space, time, and physical variables. In conjunction with gradient-based methods, adjoint systems are widely used in the optimization of systems subject to differential equation constraints. In optimization, gradient-based methods are often transformed using suitable preconditioners to accelerate the convergence of the optimization algorithm. Inspired by preconditioned gradient descent methods, we introduce a framework for the preconditioning of adjoint systems associated to evolution equations, which allows one to reshape the dynamics of the adjoint system. We develop two classes of adjoint preconditioning transformations: those that transform both the state dynamics and the adjoint equation and those that transform only the adjoint equation while leaving the state dynamics invariant. Both classes of transformations have the flexibility to include generally nonlinear state-dependent transformations. Using techniques from symplectic geometry and Hamiltonian mechanics, we further show that these preconditioned adjoint systems preserve the property that the adjoint system backpropagates the derivative of an objective function. We then apply this framework to the setting of coupled evolution equations, where we develop a notion of scale preconditioning of the adjoint equations when the state dynamics exhibit large scale-separation. We demonstrate the proposed scale preconditioning on an inverse problem for the radiation diffusion equations. Naive gradient descent is unstable for any practical gradient descent step size, whereas our proposed scale-preconditioned adjoint descent converges in 10-15 gradient-based optimization iterations, with highly accurate reproduction of the wavefront at the final time.
\end{abstract}

\maketitle

{\hypersetup{linkcolor=black}\tableofcontents}

\section{Introduction}

Achieving robust control and optimization in high-fidelity physics simulations is extremely challenging, especially for evolutionary systems whose solutions span vast scales across space, time, and physical variables. Fundamental to optimization problems subject to differential equation constraints is the calculation of gradients with respect to some objective. Adjoint equations are ubiquitous in such problems, e.g. \cite{GiPi2000, Bloch2015, PiGi2000, NgGeBe2016, CaLiPeSe2003, Ca1981, Ro2005, EiLa2020, CaObTr2014}, which broadly take a gradient with respect to some objective at the final time and integrate this gradient \emph{backwards} in time using an adjoint linearization of the forward model to provide a gradient at the initial time. Particularly, the adjoint method enables efficient backpropagation of derivatives of objective functions when the dimension of the parameter space is significantly larger than the number of objectives \cite{Sa2016, MaMiYa2023}. When the backpropagation of derivatives through the adjoint equation is combined with a gradient-based optimization algorithm, this leads to methods for solving differential equation (DE)-constrained optimization problems.

Even in the setting of unconstrained optimization, naive gradient-based approaches can suffer slow convergence due to the poor shape of the optimization landscape. This is particularly true for problems with a vast range of solution scales across space, time, and variables (e.g., energy vs. temperature), which can cause the standard $\ell^2$-gradient to vanish or explode, making optimization intractably slow or unstable. In such cases, preconditioning can be used to reshape the derivative and accelerate convergence, e.g., \cite{precGD1, precGD2, precGD3, mirrordescent1, mirrordescent2, mirrordescent3, natgrad0, natgrad1}. The most well-known example is a Hessian preconditioner, which leads to Newton iterations, although such second order derivative information is typically not easily obtained for dynamic DEs. In fact, many state of the art machine learning training algorithms can be seen as preconditioned gradient descent, e.g., \cite{yao2021adahessian,gupta2018shampoo,martens2015optimizing}. Inspired by the broad concept of preconditioned gradient-based methods for unconstrained optimization, we develop a framework for preconditioning transformations of adjoint systems for evolution equations, in order to similarly reshape the derivative that is backpropagated through the adjoint equation. These \emph{adjoint preconditioning} transformations can be split into two classes: those that are induced from transformations of the state dynamics and those that transform only the adjoint equation while leaving the state dynamics invariant. 

Adjoint systems have an elegant description in terms of symplectic geometry and Hamiltonian dynamics. The study of the geometric nature of adjoint systems arose from the observation that the combination of any system of differential equations and its adjoint equations is described by a formal Lagrangian~\cite{Ib2006, Ib2007}. The symplectic structure of adjoint systems has been a valuable tool for understanding adjoint systems and their interaction with discretization; for example, in the context of Runge--Kutta methods, it was shown in \cite{Sa2016} that forming adjoints and discretization commute if and only if the discretization is symplectic. We further investigated the geometric structure of adjoint systems for ordinary differential equations (ODEs), differential-algebraic equations (DAEs), and evolutionary partial differential equations (PDEs) in \cite{TrLe2024, TrSoLe2024}, where symplectic geometry provided a natural lens to study properties of adjoint systems. In particular, the property that the adjoint equation backpropagates the derivative of an objective function is a consequence of the symplecticity of the Hamiltonian flow of the adjoint system \cite{TrLe2024}. As such, we will show that the adjoint preconditioning transformations we develop in this paper retain this backpropagation property by showing that these transformations are symplectomorphisms mapping the standard adjoint system into the preconditioned adjoint systems.

This paper is organized as follows. In \Cref{sec:geometry-intro}, we briefly review the symplectic geometry of adjoint systems (for a more thorough discussion, see \cite{TrLe2024, TrSoLe2024}). In \Cref{sec:adjoint-preconditioning-general}, we develop preconditioning transformations of adjoint systems. To begin, in \Cref{sec:duality}, we review notions of derivatives, gradients and duality, and discuss preconditioning of gradient descent methods for unconstrained optimization. Inspired by preconditioning of gradient descent, we subsequently introduce several analogous adjoint preconditioning transformations. Specifically, in \Cref{sec:induced-from-state}, we discuss adjoint preconditioning induced by a transformation of the state dynamics. In \Cref{sec:induced-from-duality}, we discuss adjoint preconditioning induced by a duality pairing and generalize this in \Cref{sec:fiberwise-duality} to consider fiberwise duality pairings. Interestingly, the adjoint preconditioning using a fiberwise duality pairing can be naturally expressed in terms of a vector bundle connection. Furthermore, each of these adjoint preconditioning transformations will be interpreted as symplectomorphisms that pull back the standard adjoint system. As an application of such adjoint preconditioning, we consider adjoint systems for coupled evolution equations in \Cref{sec:adjoint-systems-coupled} and, particularly, in \Cref{sec:scale-preconditoning-coupled}, introduce a notion of \emph{scale preconditioning} when the coupled evolution equations exhibit large scale-separation. We demonstrate the proposed scale preconditioning with a numerical example in \Cref{sec:numerical}, involving an inverse problem for a perturbation of a thick Marshak wave governed by the radiation diffusion equations. Naive gradient descent is unstable for any practical gradient descent step size, whereas our proposed scale-preconditioned adjoint descent converges in 10-15 gradient-based optimization iterations, with highly accurate reproduction of the wavefront at the final time.

\subsection{The geometry of adjoint systems}\label{sec:geometry-intro}
In \cite{TrLe2024}, we performed a study of the geometry of adjoint systems associated with ODEs and DAEs, utilizing tools from symplectic and presymplectic geometry, respectively. In \cite{TrSoLe2024}, we extended these results to the infinite-dimensional setting by considering adjoint systems associated with evolution equations on a Banach space. Particularly, symplectic geometry has proven to be a natural tool to analyze adjoint systems and the interplay between discretization and optimization. In this paper, we focus on preconditioning transformations for adjoint systems, which can be interpreted naturally from the perspective of symplectic geometry. We will begin with a brief review of the symplectic geometry associated with adjoint systems. For our purposes, we will consider state dynamics on a finite-dimensional vector space; see \cite{TrLe2024} and \cite{TrSoLe2024} for details on the more general manifold and infinite-dimensional settings.

Let $U$ be a finite-dimensional vector space and consider the ODE on $U$ given by
\begin{equation}\label{ODEmanifold}
\dot{\bu} = \boldf(\bu),
\end{equation} 
where $\boldf$ is a vector field on $U$. Letting $\pi: TU \rightarrow U$ denote the tangent bundle projection, we recall that a vector field $\boldf$ is a map $\boldf: U \rightarrow TU$ which satisfies $\pi \circ \boldf = \mathbf{1}_U$, i.e., $\boldf$ is a section of the tangent bundle. Note that since $U$ is a vector space, its tangent bundle is trivial $TU = U \times U$ and similarly, the cotangent bundle $T^*U = U \times U^*$. Thus, a vector field on $U$ is simply a map $\boldf: U \rightarrow U$.

Consider the \emph{adjoint Hamiltonian} $H: T^*U \rightarrow \mathbb{R}$ corresponding to \eqref{ODEmanifold}, given by 
\begin{equation}\label{eq:adjoint-hamiltonian-canonical}
    H(\bu,\bp) = \llangle \bp, \boldf(\bu) \rrangle_\bu,
\end{equation}
where $\llangle\cdot,\cdot\rrangle_\bu$ is the standard duality pairing of $T^*_\bu U$ with $T_\bu U$; we will often omit the basepoint $\bu$ in the pairing when it is clear in context. Recall that the cotangent bundle $T^*U$ possesses a canonical symplectic form $\Omega = -d\Theta$ where $\Theta$ is the tautological one-form on $T^*U$. With coordinates $(\bu,\bp) = (u^\alpha, p_\beta)$ on $T^*U$, the canonical symplectic form has the coordinate expression 
$$\Omega = - \llangle dp \wedge dq \rrangle := - \sum_\alpha dp_\alpha \wedge dq^\alpha.$$
We define the adjoint system as the ODE on $T^*U$ given by Hamilton's equations, with the above choice of Hamiltonian $H$ and the canonical symplectic form. Thus, the adjoint system is given by the equation
$$ i_{X_H}\Omega = dH, $$
whose solution curves on $T^*U$ are the integral curves of the Hamiltonian vector field $X_H$. As is well-known, for the adjoint Hamiltonian \eqref{eq:adjoint-hamiltonian-canonical}, its Hamiltonian vector field $X_H$ is given by the cotangent lift $\widehat{f}$ of $f$, which is a vector field on $T^*U$ that covers $f$ (for a discussion of the geometry of the cotangent bundle and lifts, see \cite{YaIs1973, MaRa1999}). This is stated in the following proposition.
\begin{prop}
    For the adjoint Hamiltonian \eqref{eq:adjoint-hamiltonian-canonical}, the corresponding Hamiltonian vector field $X_H$ is equal to the cotangent lift $\widehat{\boldf}$ of $\boldf$. 
\end{prop}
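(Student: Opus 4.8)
The plan is to establish $X_H = \widehat{\boldf}$ by computing both vector fields in canonical (Darboux) coordinates $(u^\alpha, p_\beta)$ on $T^*U$ and matching their components, and then to recast the argument intrinsically via the tautological one-form so that the conclusion is manifestly coordinate-free. First I would expand Hamilton's equation $i_{X_H}\Omega = dH$ using $\Omega = -d\Theta = \sum_\alpha du^\alpha \wedge dp_\alpha$ together with the coordinate expression $H(\bu,\bp) = p_\alpha f^\alpha(\bu)$. Writing $X_H = \dot u^\alpha \partial_{u^\alpha} + \dot p_\alpha \partial_{p_\alpha}$, contracting $X_H$ into $\Omega$ and equating with $dH = (\partial H/\partial u^\alpha)\,du^\alpha + (\partial H/\partial p_\alpha)\,dp_\alpha$ gives, by matching the coefficients of $dp_\alpha$ and $du^\alpha$, the component equations $\dot u^\alpha = f^\alpha(\bu)$ and $\dot p_\alpha = -\,p_\beta\,\partial f^\beta/\partial u^\alpha(\bu)$.

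Second, I would compute the cotangent lift $\widehat{\boldf}$ directly. If $\phi_t$ denotes the flow of $\boldf$, its cotangent lift is the flow on $T^*U$ acting on fibers by the inverse transpose of the base Jacobian, $(\bu,\bp)\mapsto(\phi_t(\bu),\,(D\phi_t(\bu))^{-*}\bp)$, and $\widehat{\boldf}$ is the infinitesimal generator of this flow. Differentiating at $t=0$, using $\phi_0 = \mathbf{1}_U$, $D\phi_0 = I$, and the variational identity $\tfrac{d}{dt}\big|_{0}D\phi_t(\bu) = D\boldf(\bu)$, yields base component $f^\alpha(\bu)$ and fiber component $-(D\boldf(\bu))^*\bp$, which in coordinates reads $-\,p_\beta\,\partial f^\beta/\partial u^\alpha$. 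These agree with the Hamiltonian components above, establishing $X_H = \widehat{\boldf}$.

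For a basis-independent proof I would instead show that $X_H$ preserves the tautological one-form and covers $\boldf$, which together characterize the cotangent lift. Since $i_{X_H}\Theta = p_\alpha f^\alpha = H$ and $d\Theta = -\Omega$, Cartan's magic formula gives $\mathcal{L}_{X_H}\Theta = d(i_{X_H}\Theta) + i_{X_H}d\Theta = dH - i_{X_H}\Omega = dH - dH = 0$. Combined with $T\pi\circ X_H = \boldf\circ\pi$ (read off from $\dot u = f(u)$), the characterization of cotangent lifts as the $\Theta$-preserving vector fields covering a prescribed base field yields $X_H = \widehat{\boldf}$; I would cite this characterization from the cotangent-bundle geometry references already noted in the excerpt.

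The main obstacle is bookkeeping with conventions rather than any genuine difficulty: one must fix the sign in $\Omega = -d\Theta$ and the definition of the cotangent lift (inverse transpose versus transpose on fibers) consistently, and correctly invoke the variational relation $\tfrac{d}{dt}\big|_{0}(D\phi_t)^{-*} = -(D\boldf)^*$ so that the Jacobian-transpose structure of the adjoint equation emerges with the right sign. In the intrinsic route the only nontrivial input is the characterization theorem for cotangent lifts through invariance of $\Theta$, which makes the symplectic viewpoint that the rest of the paper relies upon transparent.
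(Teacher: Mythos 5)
Your argument is correct, and both of your routes lead to the stated identity; the interesting comparison is with the direction of the paper's own proof. The paper argues from the cotangent lift toward Hamilton's equation: it takes $\widehat{\boldf}$ as given (the generator of the lifted flow $T^*\Phi_{-\epsilon}$), uses the fact that cotangent-lifted flows preserve the tautological one-form to get $\mathcal{L}_{\widehat{\boldf}}\Theta = 0$, and then Cartan's formula gives $i_{\widehat{\boldf}}\Omega = d(i_{\widehat{\boldf}}\Theta) = dH$, identifying $\widehat{\boldf}$ as the Hamiltonian vector field by nondegeneracy of $\Omega$. Your intrinsic route runs the same computation in reverse: you start from $X_H$, deduce $\mathcal{L}_{X_H}\Theta = dH - i_{X_H}\Omega = 0$ together with $T\pi \circ X_H = \boldf \circ \pi$, and then must invoke the characterization of cotangent lifts as the $\Theta$-preserving vector fields covering a given base field --- an extra (standard, citable) input that the paper's direction avoids, since there the defining property of the lift is used directly rather than as a converse. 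Your coordinate computation is a genuinely more elementary alternative that the paper omits entirely: it requires no machinery beyond matching coefficients of $du^\alpha$ and $dp_\alpha$ and the variational identity $\frac{d}{dt}\big|_{0}(D\phi_t)^{-*} = -(D\boldf)^*$, and it has the side benefit of producing the explicit form $\dot{\bp} = -[D\boldf(\bu)]^*\bp$ of the adjoint equation that the paper states only after the proposition. The one convention you must keep straight (and you flag it yourself) is that the lift covering the forward flow acts on fibers by the inverse transpose of the base Jacobian, which is exactly the paper's $T^*\Phi_{-\epsilon}$; with that fixed, all three versions agree.
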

\begin{proof}
    To be more explicit, recall that the cotangent lift of $\boldf$ is constructed as follows. Let $\Phi_{\epsilon}: U \rightarrow U$ denote the time-$\epsilon$ flow map of $\boldf$, which is a one-parameter family of diffeomorphisms. Then, we consider the cotangent lifted diffeomorphisms given by $T^*\Phi_{-\epsilon}: T^*U \rightarrow T^*U$. This covers $\Phi_{\epsilon}$ in the sense that $\pi_{T^*U} \circ T^*\Phi_{-\epsilon} = \Phi_{\epsilon} \circ \pi_{T^*U} $ where $\pi_{T^*U}: T^*U \rightarrow U$ is the cotangent bundle projection. The cotangent lift $\widehat{\boldf}$ is then defined to be the infinitesimal generator of the cotangent lifted flow, for $z \in T^*U$,
    $$ \widehat{\boldf}(z) = \frac{d}{d\epsilon}\Big|_{\epsilon=0} T^*\Phi_{-\epsilon} (z). $$
We can directly verify that $\widehat{\boldf}$ is the Hamiltonian vector field for $H$, which follows from
$$ i_{\widehat{\boldf}}\Omega = -i_{\widehat{\boldf}}d\Theta = -\mathcal{L}_{\widehat{\boldf}}\Theta + d( i_{\widehat{\boldf}}\Theta ) = d( i_{\widehat{\boldf}}\Theta) = dH, $$
where $\mathcal{L}_{\hat{\boldf}}\Theta = 0$ follows from the fact that cotangent lifted flows preserve the tautological one-form and $H = i_{\widehat{\boldf}}\Theta$ follows from a direct computation, where $i_{\widehat{\boldf}}\Theta$ is interpreted as a real-valued function on the cotangent bundle which maps $(\bu,\bp)$ to $\llangle \Theta(\bu,\bp), \widehat{\boldf}(\bu,\bp)\rrangle_\bu$.
\end{proof}

With coordinates $z = (\bu,\bp)$ on $T^*U$, the adjoint system is the ODE on $T^*U$ given by
\begin{equation}\label{AdjointODEmanifold}
\dot{z} = \widehat{\boldf}(z). 
\end{equation}

The adjoint system \eqref{AdjointODEmanifold} covers \eqref{ODEmanifold} in the following sense.
\begin{prop}[Proposition 2.2 of \cite{TrLe2024}] \label{LiftIntegralCurveProp}
Integral curves to the adjoint system \eqref{AdjointODEmanifold} lift integral curves to the system \eqref{ODEmanifold}. \qed
\end{prop}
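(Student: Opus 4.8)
The plan is to show that if $z(t) = (\bu(t), \bp(t))$ is an integral curve of the adjoint system \eqref{AdjointODEmanifold}, then its projection $\bu(t) := \pi_{T^*U}(z(t))$ to the base is an integral curve of \eqref{ODEmanifold}; this is precisely what it means for the adjoint integral curve to \emph{lift} a solution of the state equation. The entire argument rests on the fact that the cotangent lift $\widehat{\boldf}$ is a vector field on $T^*U$ that \emph{covers} $\boldf$, in the sense that $T\pi_{T^*U} \circ \widehat{\boldf} = \boldf \circ \pi_{T^*U}$, where $T\pi_{T^*U}: T(T^*U) \to TU$ denotes the tangent map of the bundle projection.

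First I would establish this covering identity for $\widehat{\boldf}$ from the corresponding covering identity for the cotangent lifted flow. The proof of the preceding proposition already records that $\pi_{T^*U} \circ T^*\Phi_{-\epsilon} = \Phi_\epsilon \circ \pi_{T^*U}$, where $\Phi_\epsilon$ is the flow of $\boldf$. Differentiating both sides in $\epsilon$ at $\epsilon = 0$ and using the definition $\widehat{\boldf}(z) = \frac{d}{d\epsilon}\Big|_{\epsilon = 0} T^*\Phi_{-\epsilon}(z)$, the left-hand side becomes $T\pi_{T^*U}(\widehat{\boldf}(z))$ while the right-hand side becomes $\boldf(\pi_{T^*U}(z))$, yielding the desired identity. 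The only point requiring care is the interchange of the tangent map $T\pi_{T^*U}$ with the $\epsilon$-derivative, which is justified by the chain rule applied to the $\epsilon$-dependent curve $\epsilon \mapsto T^*\Phi_{-\epsilon}(z)$.

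With the covering identity in hand, the conclusion follows by a short chain-rule computation. Differentiating $\bu(t) = \pi_{T^*U}(z(t))$ and using $\dot z(t) = \widehat{\boldf}(z(t))$ gives
\begin{equation*}
\dot{\bu}(t) = T\pi_{T^*U}(\dot z(t)) = T\pi_{T^*U}\big(\widehat{\boldf}(z(t))\big) = \boldf\big(\pi_{T^*U}(z(t))\big) = \boldf(\bu(t)),
\end{equation*}
so $\bu(t)$ indeed solves \eqref{ODEmanifold}, as claimed.

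As a cross-check, the same statement is transparent in the canonical coordinates $(\bu, \bp)$: Hamilton's equations for the adjoint Hamiltonian $H(\bu,\bp) = \llangle \bp, \boldf(\bu)\rrangle$ read $\dot{\bu} = \partial H/\partial \bp = \boldf(\bu)$ and $\dot{\bp} = -\partial H/\partial \bu$, so the $\bu$-component of the adjoint system is literally the state equation \eqref{ODEmanifold}. I do not expect a substantial obstacle here; the argument is essentially a matter of correctly invoking the covering property of the cotangent lift, and the only genuine subtlety is the bookkeeping in commuting $T\pi_{T^*U}$ with differentiation in $\epsilon$.
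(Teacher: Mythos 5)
Your proof is correct and follows exactly the argument implicit in the paper, which states this proposition without proof (citing \cite{TrLe2024}) precisely because it is the standard covering property of the cotangent lift: since $T\pi_{T^*U}\circ\widehat{\boldf}=\boldf\circ\pi_{T^*U}$, projections of integral curves of $\widehat{\boldf}$ are integral curves of $\boldf$. Your coordinate cross-check also matches the coordinate form of the adjoint system displayed immediately after the proposition, whose $\bu$-component is literally $\dot{\bu}=\boldf(\bu)$.
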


In coordinates, the adjoint system has the form
\begin{align*}
    \dot{\bu} &= \boldf(\bu), \\
    \dot{\bp} &= -[D\boldf(\bu)]^*\bp,
\end{align*}
where $D\boldf$ is the Fr\'{e}chet derivative of $\boldf$ (we will discuss this in more detail in \Cref{sec:duality}) and $[\ \cdot\ ]^*$ denotes the adjoint of $D\boldf(\bu)$ with respect to the duality pairing $\llangle\cdot,\cdot\rrangle_\bu$. We will refer to the variable $\bu$ as the state variable and $\dot{\bu} = \boldf(\bu)$ as the state dynamics; we refer to $\bp$ as the adjoint variable and $\dot{\bp} = -[D\boldf(\bu)]^*\bp$ as the adjoint equation. We refer to both equations together as the adjoint system.

The adjoint system, together with the variational equations of \eqref{ODEmanifold}, possesses a quadratic invariant. The variational equation is given by considering the tangent lifted vector field on $TU$, $\widetilde{\boldf}: TU \rightarrow TTU$, which is defined in terms of the flow $\Phi_{\epsilon}$ generated by $\boldf$ by
$$ \widetilde{\boldf}(\bu,\delta \bu) = \frac{d}{d\epsilon}\Big|_{\epsilon = 0} T\Phi_{\epsilon} (\bu,\delta \bu), $$
where $(\bu,\delta \bu)$ are coordinates on $TU$. That is, $\widetilde{\boldf}$ is the infinitesimal generator of the tangent lifted flow. The variational equation associated with \eqref{ODEmanifold} is the ODE associated with the tangent lifted vector field. In coordinates, 
\begin{equation}\label{VariationalODEmanifold}
\frac{d}{dt}(\bu,\delta \bu) = \widetilde{\boldf}(\bu,\delta \bu).
\end{equation}
\begin{prop}[Proposition 2.3 of \cite{TrLe2024}] \label{prop:ManifoldQuadraticInvariant}
For a curve $(\bu,\delta \bu,\bp)$ on $TU \oplus_U T^*U$ satisfying \eqref{AdjointODEmanifold} and  \eqref{VariationalODEmanifold},
\begin{equation}\label{ManifoldVariationalQuadratic}
\frac{d}{dt} \llangle \bp(t), \delta \bu(t)\rrangle_{\bu(t)} = 0. \end{equation} \qed
\end{prop}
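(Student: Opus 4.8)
The plan is to differentiate the pairing $\llangle \bp(t), \delta\bu(t)\rrangle_{\bu(t)}$ directly along the coupled flow, exploiting that $U$ is a vector space so that the pairing of $T^*_\bu U = U^*$ with $T_\bu U = U$ is the single fixed bilinear pairing of $U^*$ with $U$, independent of the basepoint. First I would record the two equations of motion in coordinates. The adjoint equation is already available as $\dot{\bp} = -[D\boldf(\bu)]^*\bp$. For the variational equation I would compute the coordinate form of the tangent lift $\widetilde{\boldf}$: differentiating the tangent-lifted flow $T\Phi_\epsilon(\bu,\delta\bu)$ at $\epsilon = 0$ yields $\dot{\delta\bu} = D\boldf(\bu)\,\delta\bu$, i.e.\ the linearization of the state dynamics along the reference trajectory.

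With both equations in hand, I would apply the product rule, which is valid because the pairing is bilinear and basepoint-independent on a vector space:
$$\frac{d}{dt}\llangle \bp, \delta\bu\rrangle = \llangle \dot{\bp}, \delta\bu\rrangle + \llangle \bp, \dot{\delta\bu}\rrangle = -\llangle [D\boldf(\bu)]^*\bp, \delta\bu\rrangle + \llangle \bp, D\boldf(\bu)\,\delta\bu\rrangle.$$
The two terms cancel by the defining property of the adjoint operator $[D\boldf(\bu)]^*$ with respect to the pairing, namely $\llangle [D\boldf(\bu)]^*\bp, \delta\bu\rrangle = \llangle \bp, D\boldf(\bu)\,\delta\bu\rrangle$, which establishes the claim.

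The main obstacle here is bookkeeping rather than anything deep: one must justify the product rule in the notationally basepoint-dependent pairing $\llangle\cdot,\cdot\rrangle_{\bu(t)}$ and verify the coordinate form of the tangent lift. On a vector space both are immediate, but in the general manifold setting of \cite{TrLe2024} the identification of $T_\bu U$ and $T^*_\bu U$ varies with the basepoint, so one cannot naively differentiate termwise; there the invariant is cleanest to establish geometrically, which I would present as the conceptual explanation.

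Alternatively, and more in the spirit of the preceding geometric setup, I would observe that $\llangle \bp, \delta\bu\rrangle$ is a function on the Whitney sum $TU \oplus_U T^*U$ whose value along the solution is the pairing transported by the combined tangent- and cotangent-lifted flow $(T\Phi_\epsilon, T^*\Phi_{-\epsilon})$. Since the cotangent lift $T^*\Phi_{-\epsilon}$ is, fiberwise, the inverse transpose of the tangent lift $T\Phi_\epsilon$, one has $\llangle T^*\Phi_{-\epsilon}(\bp), T\Phi_\epsilon(\delta\bu)\rrangle = \llangle \bp,\delta\bu\rrangle$ for all $\epsilon$, using $d\Phi_{-\epsilon}\circ d\Phi_\epsilon = \mathbf{1}$. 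Differentiating this identity at $\epsilon = 0$ recovers \eqref{ManifoldVariationalQuadratic} and makes transparent that the cancellation above is not a coincidence: it is the infinitesimal form of the duality between the tangent and cotangent lifts.
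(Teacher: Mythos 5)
Your proposal is correct. Note that the paper itself supplies no proof here: the proposition is imported verbatim from Proposition 2.3 of \cite{TrLe2024} with a \qed, and the only justification offered in the surrounding text is that the invariance ``is a consequence of the symplecticity of the Hamiltonian flow.'' Your first argument --- the direct product-rule computation $\frac{d}{dt}\llangle \bp, \delta\bu\rrangle = -\llangle [D\boldf(\bu)]^*\bp, \delta\bu\rrangle + \llangle \bp, D\boldf(\bu)\,\delta\bu\rrangle = 0$ --- is the elementary route and is fully rigorous in the vector-space setting of this paper, where the pairing is basepoint-independent; you are right to flag that the subscript $\bu(t)$ is purely notational here and that the termwise differentiation would need more care on a genuine manifold. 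Your second argument, via the identity $\llangle T^*\Phi_{-\epsilon}(\bp), T\Phi_\epsilon(\delta\bu)\rrangle = \llangle \bp,\delta\bu\rrangle$ and differentiation at $\epsilon=0$, is also correct (the cotangent lift $T^*\Phi_{-\epsilon}$ covers $\Phi_\epsilon$ and acts fiberwise as $(T\Phi_{-\epsilon})^{\wedge}$, so the cancellation $T\Phi_{-\epsilon}\circ T\Phi_\epsilon = \mathbf{1}$ does the work); it is essentially the finite (integrated) form of the symplecticity/lift-duality mechanism the paper alludes to, and it is the version that survives on manifolds. Either argument suffices; the second is closer in spirit to the geometric framing the authors intend.
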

This quadratic invariant is the key to adjoint sensitivity analysis \cite{Sa2016}, and its invariance is a consequence of the symplecticity of the Hamiltonian flow \cite{TrLe2024}. 

\section{Preconditioning of adjoint systems for evolution equations}\label{sec:adjoint-preconditioning-general}
We now discuss preconditioning transformations of adjoint systems. To begin, we will motivate these transformations by discussing preconditioning of standard gradient descent.

\subsection{Derivatives, gradients and duality}\label{sec:duality}
Let $U$ be a finite-dimensional vector space equipped with an inner product $((\cdot,\cdot)): U \times U \rightarrow \mathbb{R}$ with induced norm $\|\cdot\|_U$. Let $Z$ be a normed finite-dimensional vector space with norm $\|\cdot\|_Z$. 

\begin{definition}[Fr\'{e}chet Derivative] \label{def:frechet-derivative}
We say that a mapping $h: U \rightarrow Z$ is Fr\'{e}chet differentiable if there exists a mapping $Dh: U \rightarrow L(U,Z)$, where $L(U,Z)$ denotes the space of linear maps from $U$ to $Z$, such that, for all $\bu \in U$, 
\begin{equation}\label{eq:frechet-derivative}
    \lim_{\|\delta \bu \|_U\rightarrow 0} \frac{\|h(\bu + \delta \bu) - h(\bu) - Dh(\bu) \delta \bu \|_Z}{\|\delta \bu\|_U} = 0.
\end{equation}
If $h$ is Fr\'{e}chet differentiable, we refer to $Dh$ as the \textbf{Fr\'{e}chet derivative} (or simply, the derivative) of $h$.
\end{definition}

We will assume throughout that all maps are Fr\'{e}chet differentiable. We will be particularly interested in the cases when the codomain $Z = \mathbb{R}$, i.e., the mapping is a scalar function, and when the codomain $Z = U$, i.e., the mapping is a vector field.

Let us first consider the case when the codomain is $\mathbb{R}$. Let $C: U \rightarrow \mathbb{R}$; by \Cref{def:frechet-derivative}, its Fr\'{e}chet derivative is a mapping $DC: U \rightarrow L(U, \mathbb{R}) =: U^*$, where $U^* = L(U, \mathbb{R})$ is the dual space consisting of real-valued linear functionals on $U$. To contextualize this discussion, consider the gradient descent algorithm for the unconstrained optimization problem $\min_{\bu \in U} C(\bu)$,
\begin{equation}\label{eq:grad-descent}
    \bu^{k+1} = \bu^k - \gamma \nabla C(\bu^k),
\end{equation}
where $\gamma$ is the step-size and $\bu^0$ is some initial iterate. Here, the gradient $\nabla C: U \rightarrow U$ is defined in terms of the Fr\'{e}chet derivative $DC: U \rightarrow U^*$, the inner product on $U$, and a duality pairing, which we will now define.  

\begin{definition}[Duality Pairing]\label{def:duality-pairing}
A \textbf{duality pairing} between $U$ and its dual $U^*$ is a non-degenerate bilinear mapping $\langle\cdot,\cdot\rangle: U^* \times U \rightarrow \mathbb{R}$. 

Viewing $U^*$ as the space of linear functionals $L(U, \mathbb{R})$, the \textbf{standard duality pairing} $\llangle\cdot,\cdot\rrangle$ is defined by
$$ \llangle \bp, \bu\rrangle := \bp(\bu) $$
for all $\bp \in L(U,\mathbb{R}) = U^*$ and all $\bu \in U$.
\end{definition}

Now, for any $\bp \in U^*$, by the Riesz representation theorem, there exists a unique vector $\bp^\sharp \in U$ such that
\begin{equation}\label{eq:RRT}
    \langle \bp,\bu\rangle = ((\bp^\sharp, \bu)) \text{ for all } \bu \in U.
\end{equation}
Note that $\bp^\sharp$ depends on both the choice of duality pairing $\langle\cdot,\cdot\rangle$ and the choice of inner product $((\cdot,\cdot))$. For given choices of a duality pairing and inner product, the \textbf{gradient} of $C$ is a mapping $\nabla C: U \rightarrow U$ defined by
\begin{equation}\label{eq:gradient}
    \nabla C(\bu) := (DC(\bu))^\sharp \in U.
\end{equation}

 Observe that the gradient descent algorithm can be interpreted as follows: one chooses an inner product and duality pairing to identify the derivative $DC(\bu) \in U^*$ with a vector $\nabla C(\bu) \in U$, and one then performs a greedy minimization of the objective with respect to $\nabla C(\bu)$. An alternative to the gradient descent algorithm \eqref{eq:grad-descent} is the mirror descent algorithm \cite{mirrordescent1, mirrordescent2, mirrordescent3}. In mirror descent, one transforms the state vector $\bu \in U$ into a dual vector $\widetilde{\bu}$ in $U^*$, performs the descent step in the dual space with the derivative $DC(\bu) \in U^*$, and then transforms the resulting dual vector back into a state vector. More precisely, consider a generally nonlinear invertible transformation $F: U \rightarrow U^*$. The mirror descent algorithm is given by
\begin{subequations}\label{eq:mirror-descent}
    \begin{align}
        \widetilde{\bu}_k &:= F(\bu_k) \in U^*, \\
        \widetilde{\bu}_{k+1} &:= \widetilde{\bu}_k - \gamma DC(\bu_k) \in U^*, \\
        \bu_{k+1} &:= F^{-1} (\widetilde{\bu}_{k+1}) \in U.
    \end{align}
\end{subequations}

\textbf{Duality on $\mathbb{R}^N$.}
In practice, one typically works with vectors in $\mathbb{R}^N \ni \vec{u}$ instead of abstract vector spaces $U$, $U^*$ (throughout, we will denote vectors in $\mathbb{R}^N$ with an overset vector symbol). One can interpret working with $\mathbb{R}^N$ as coordinatizing a vector space relative to a given basis. Consider a finite-dimensional vector space $U$ with a basis $\{\varphi_i \in U\}_{i=1}^N$. Then, $U$ is coordinatized by the coefficients $\vec{u} = (u^i) \in \mathbb{R}^N$ in a basis expansion, i.e.,
    $$ \vec{u} = (u_i) \in \mathbb{R}^N \text{ coordinatizes } \bu = \sum_{i=1}^{N} u^i \varphi_i \in U.$$
Similarly, the dual space $U^*$ can be coordinatized by the coefficients $\vec{p} = (p_j) \in \mathbb{R}^N$ relative to a basis $\{l^j \in L(U,\mathbb{R})\}_{j=1}^N$ of $U^*$, i.e.,
    $$ \vec{p} = (p_j) \in \mathbb{R}^N \text{ coordinatizes } \bp = \sum_{j=1}^N p_j l^j \in U^*.$$
Although $\vec{u}$ and $\vec{p}$ above are both elements of $\mathbb{R}^N$, we will refer to $\vec{u}$ as a \emph{primal} vector, in the sense that it coordinatizes an element $\bu \in U$, and we will refer to $\vec{p}$ as a \emph{dual} vector, in the sense that it coordinatizes an element $\bp \in U^*$.

In this setting, the standard inner product on $\mathbb{R}^N$ is given by
$$ ((\vec{u}_1,\vec{u}_2))_{\text{standard}} = \vec{u}_1^{\,T} \vec{u}_2. $$
Similarly, for a dual vector $\vec{p} \in \mathbb{R}^N$ and primal vector $\vec{u} \in \mathbb{R}^N$, the standard duality pairing is given by
$$ \llangle \vec{p}, \vec{u} \rrangle = \vec{p}^{\,T} \vec{u}. $$

Identifying the space of linear transformations $L(\mathbb{R}^N, \mathbb{R}^M)$ with the space $\mathbb{R}^{M \times N}$ of real $M \times N$ matrices, a choice of inner product $((\cdot,\cdot))_M$ on $\mathbb{R}^N$ is equivalent to a choice of symmetric positive-definite (SPD) matrix $M \in \mathbb{R}^{N \times N}$, defined by
$$ ((\vec{u}_1,\vec{u}_2))_M := \vec{u}_1^{\,T} M \vec{u}_2. $$
Similarly, a choice of duality pairing $\langle\cdot,\cdot\rangle$ is equivalent to a choice of invertible matrix $P \in \mathbb{R}^{N \times N}$, defined by
$$ \langle \vec{p},\vec{u}\rangle = \vec{p}^{\,T} P \vec{u}. $$
Thus in $\mathbb{R}^N$, the identification of a dual vector $\vec{p}$ with a primal vector $\vec{p}^{\,\sharp}$ from \eqref{eq:RRT}, with the above choice of inner product and duality pairing, is given by
$$ \vec{p}^{\,\sharp} = M^{-1} P \vec{p}. $$
The gradient descent algorithm in the $\mathbb{R}^N$ setting is then
\begin{equation}\label{eq:grad-descent-Rn}
    \vec{u}_{k+1} = \vec{u}_k - \gamma M^{-1} P D C(\vec{u}_k),
\end{equation}
where the cost function is a map $C: \mathbb{R}^N \rightarrow \mathbb{R}$ and the components of $DC$ are simply its partial derivatives with respect to the components of $\vec{u}$, i.e.,
$$ (DC(\vec{u}) )_i = \frac{\partial C(\vec{u})}{\partial u^i}. $$

\textbf{Preconditioning of gradient descent.} From \eqref{eq:grad-descent-Rn}, the freedom in choosing both a duality pairing and an inner product allows one to ``precondition" gradient descent, e.g.  \cite{precGD1, precGD2, precGD3}, by reshaping the derivative of the cost function by an SPD matrix $M^{-1}$ (corresponding to a choice of inner product) or by an invertible matrix $P$ (corresponding to a choice of duality pairing). Since $P$ is an arbitrary invertible matrix, one can absorb the SPD matrix $M^{-1}$ appearing in \eqref{eq:grad-descent-Rn} into the definition of $P$ by redefining $M^{-1}P \rightarrow P$. Geometrically, this amounts to choosing a fixed inner product and allowing the duality pairing to vary, which offers the same flexibility as allowing both the inner product and duality pairing to vary. Note that, in general, this is not the same as fixing a duality pairing and allowing the inner product to vary, since then one restricts the preconditioning of gradient descent to only SPD matrices. Thus, the preconditioned gradient descent becomes
\begin{equation}\label{eq:prec-grad-descent-Rn-1}
    \vec{u}_{k+1} = \vec{u}_k - \gamma P D C(\vec{u}_k). 
\end{equation}
There is a further degree of flexibility one can provide in the preconditioned gradient descent \eqref{eq:prec-grad-descent-Rn-1}, which is to allow the invertible matrix $P \in \mathbb{R}^{N \times N}$ to be a state-dependent matrix, i.e., $P(\bu) \in \mathbb{R}^{N \times N}$. Such a choice arises, for example, in the natural gradient method \cite{natgrad0, natgrad1} where $P(\bu)$ corresponds to (the inverse of) a Riemannian metric on $\mathbb{R}^N$. Thus, we have the state-dependent preconditioned gradient descent
\begin{equation}\label{eq:prec-grad-descent-Rn-2}
    \vec{u}_{k+1} = \vec{u}_k - \gamma P(\vec{u}_k) D C(\vec{u}_k). 
\end{equation}
As another degree of flexibility, inspired by the mirror descent algorithm \eqref{eq:mirror-descent}, one can transform the primal space prior to performing the gradient descent step. To summarize, we have discussed three possible transformations of the gradient descent algorithm:
\begin{itemize}
    \item Reshape the derivative by an invertible matrix as in \eqref{eq:prec-grad-descent-Rn-1}.
    \item Reshape the derivative by a state-dependent invertible matrix as in \eqref{eq:prec-grad-descent-Rn-2}.
    \item Transform the primal space as in \eqref{eq:mirror-descent}.
\end{itemize}
The above discussion is in the context of gradient-based methods for unconstrained optimization, but it inspires the transformations of the adjoint system that we now discuss. In \Cref{sec:adjoint-prec-main}, we will introduce analogous transformations for the adjoint system in the context of dynamically constrained optimization. Note that the second item above, of course, includes the first item as a special case; however, in the context of adjoint systems, we will cover the two cases separately since the state-dependent case requires more care to handle. We will refer to such transformations as \emph{adjoint preconditioning} since the transformations are conceptually similar to preconditioned gradient descent methods in unconstrained optimization. We will further see that these adjoint preconditioning transformations are natural in the sense that they appropriately pull back the symplectic (particularly, Hamiltonian) structure of the canonical adjoint system.

\subsection{Adjoint preconditioning}\label{sec:adjoint-prec-main}
In analogy with the above preconditioning transformations of gradient descent, we will now introduce several different notions of preconditioning of the adjoint system. First, we will discuss how a transformation of the state dynamics induces preconditioning of the adjoint equation. Subsequently, we will discuss how the adjoint equation can be preconditioned independently from the state dynamics, which corresponds to a change in duality pairing. We will then generalize this further by allowing the duality pairing to be state-dependent.

To begin, we will review the canonical adjoint system in the context of dynamically constrained optimization. Consider the following optimization problem subject to the state dynamics of an ODE,
    \begin{align}\label{eq:state-constrained-optimization}
        \min_{\bu_0 \in U} C( &\bu(T) )  \text{ such that } \\
         \dot{\bu} &= \boldf(t,\bu) \text{ for } t \in (0,T), \nonumber \\
        \bu(0) &= \bu_0, \nonumber
    \end{align}
where $\boldf: [0,T] \times U \rightarrow U$ is a generally nonlinear time-dependent vector field on $U$. As is well-known \cite{CaLiPeSe2003, Sa2016, EiLa2020, LiPe2004, MaMiYa2023, NoWa2007, SaKa2000}, such dynamically-constrained optimization problems can be approached using gradient-based methods through the adjoint equation, where one regards $C(\bu(T))$ implicitly as a function of $\bu_0$ through the evolution equation $\dot{\bu} = f(t,\bu), \bu(0) = \bu_0$. Note that it is straightforward to modify the adjoint-based approach to instead consider optimizing cost functions over parameters, as opposed to the initial condition, as well as including a running cost term in the cost function (see, e.g., \cite{TrLe2024, TrLe2025}).

Roughly speaking, the adjoint equation converts the derivative $DC(\bu(T))$ at the final time into its derivative with respect to the initial condition $\bu_0$, i.e., ``backpropagates" the derivative \cite{Sa2016}. This is illustrated in \Cref{fig:backprop_0}.
\begin{figure}[H]
\begin{center}
\includegraphics[width=80mm]{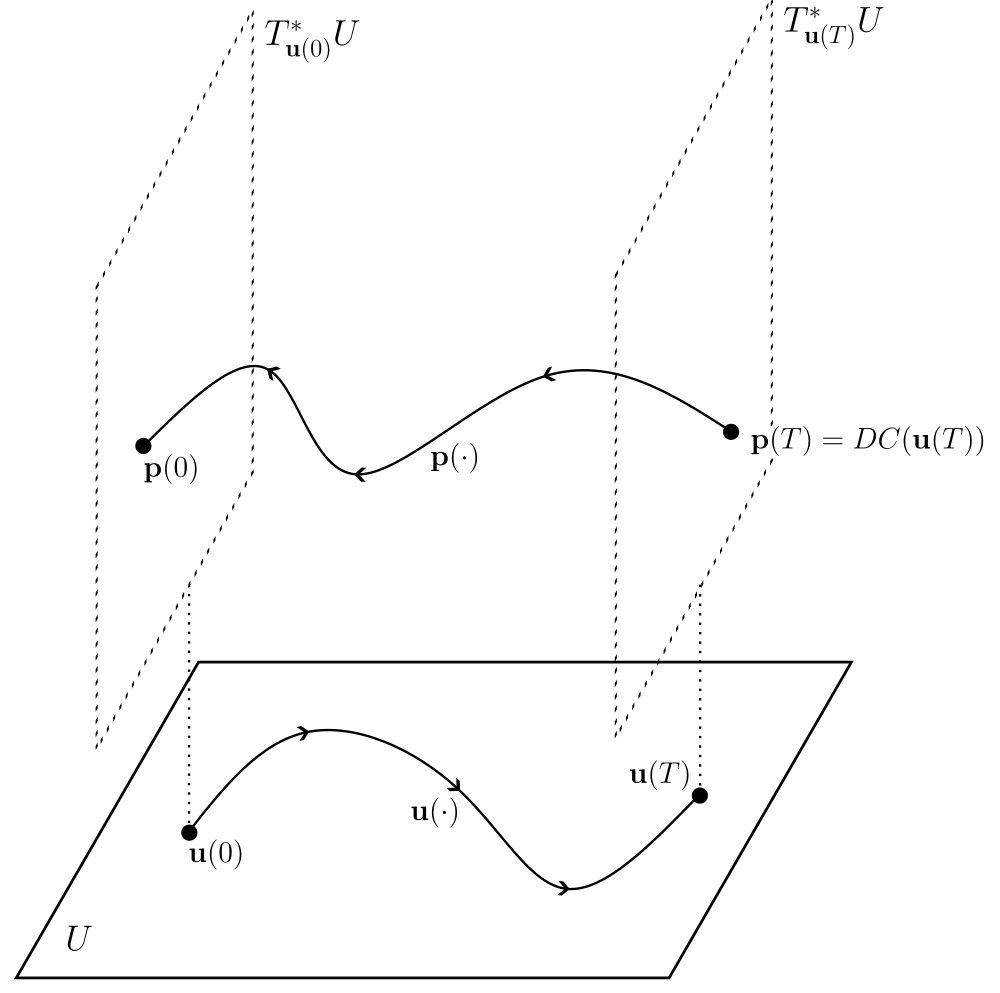}
\caption{Schematic of the state dynamics for $\bu(\cdot)$ evolving on the base space $U$ and backpropagation through the adjoint equation for $\bp(\cdot)$ evolving in reverse time on the fibers of $T^*U$ over $\bu(\cdot)$.}
\label{fig:backprop_0}
\end{center}
\end{figure}

In particular, the adjoint system on $U \times U^*$ associated with the problem \eqref{eq:state-constrained-optimization} is given by the state dynamics on $U$ with a specified initial condition, together with the adjoint equation on $U^*$ with a specified terminal condition, i.e.,
\begin{subequations}\label{eq:vector-space-adjoint}
\begin{align}
    \frac{d}{dt} \bu(t) &= \boldf(t,\bu) \text{ for } t \in (0,T), \label{eq:vector-space-adjoint-a}\\
    \frac{d}{dt} \bp(t) &= - [D\boldf(t,\bu(t))]^* \bp(t) \text{ for } t \in (0,T),\label{eq:vector-space-adjoint-b} \\
    \bu(0) &= \bu_0,\\
    \bp(T) &= DC(\bu(T)).
\end{align}
\end{subequations}
In \eqref{eq:vector-space-adjoint} above, for each $t$, $\boldf(t,\cdot)$ is a map $U \rightarrow U$; thus, its derivative is a map $D\boldf(t,\bu(t)) \in L(U,U)$. Furthermore, $[\ \cdot\ ]^*$ denotes the adjoint of this mapping with respect to the standard duality pairing, $[D\boldf(t,\bu(t))]^* \in L(U^*,U^*)$, defined by
$$\llangle\, [D\boldf(t,\bu(t))]^* \bq, \bv\rrangle = \llangle \bq, D\boldf(t,\bu(t)) \bv \rrangle \text{ for all } \bq \in U^*, \bv \in U. $$
Then, when $\bp(t)$ is a solution of the adjoint equation \eqref{eq:vector-space-adjoint}, $\bp(0)$ gives the derivative of $C(\bu(T))$ with respect to $\bu_0$ \cite{Sa2016, TrLe2024}, where $u(T)$ is given by the flow of $\boldf$ on $\bu_0$ from time $0$ to time $T$. A gradient descent algorithm for the problem \eqref{eq:state-constrained-optimization} then takes the form
\begin{equation}\label{eq:adjoint-grad-descent-algorithm}
    \bu_0^{k+1} = \bu^{k}_0 - \gamma (\bp(0)^k)^\sharp,
\end{equation}
where $(\cdot)^\sharp$, defined in \Cref{eq:RRT}, is taken with respect to a given inner product on $U$ and the standard duality pairing on $U^*$, $\bu^0_0 = \bu_0$ is the initial iterate, and $\bp(0)^k$ is the solution of the adjoint equation \eqref{eq:vector-space-adjoint} with terminal condition $\bp(T) = DC(\bu^k(T))$, where $\bu^k(\cdot)$ satisfies the state dynamics $\dot{\bu}^k = f(t,\bu^k)$ with initial condition $\bu^k(0) = \bu^k_0$.

As discussed in \Cref{sec:geometry-intro}, the adjoint system corresponds to a Hamiltonian system with the canonical symplectic structure on $T^*U$. The fact that the adjoint equation backpropagates the derivative arises from the adjoint-variational conservation law \cite{Sa2016},
$$ \frac{d}{dt} \llangle \bp, \delta\bq \rrangle = 0, $$
which in turn follows from the symplecticity of the Hamiltonian flow of the adjoint system \cite{TrLe2024}. We will now discuss three different types of adjoint preconditioning and furthermore, show that they are natural as they appropriately transform the symplectic structure of the canonical adjoint system and, thus, admit analogous adjoint-variational conservation laws.

\subsubsection{Adjoint preconditioning induced from transformation of the state dynamics.}\label{sec:induced-from-state}
In this section, we discuss how a transformation of the state dynamics induces an adjoint preconditioning transformation. We will then provide an example where such a transformation arises: evolution equations involving mass matrices. To conclude this section, we will interpret this adjoint preconditioning transformation as a symplectomorphism preserving the canonical Hamiltonian structure of the adjoint system.

Consider a transformation of the base space, given by a generally nonlinear diffeomorphism
\begin{align*}\label{eq:base-space-transformation}
    L: U &\rightarrow U, \\
        \bu &\mapsto \widetilde{\bu} := L(\bu).
\end{align*}
Under this transformation, the state dynamics \eqref{eq:vector-space-adjoint-a} transform as
\begin{equation}\label{eq:U-state-dynamics-transformed}
    \frac{d}{dt} \widetilde{\bu} = DL( L^{-1} (\widetilde{\bu}))\boldf(t,L^{-1}(\widetilde{\bu})). 
\end{equation}
We now compute the adjoint equation for the transformed state dynamics \eqref{eq:U-state-dynamics-transformed}. Denoting the right-hand side of \eqref{eq:U-state-dynamics-transformed} as $\widetilde{\boldf}(t,\widetilde{\bu})$, the corresponding adjoint equation is $d\widetilde{\bp}/dt = - [D\widetilde{\boldf}(t,\widetilde{\bu})]^* \widetilde{\bp}$. Expanding out the derivative on the right-hand side yields
\begin{align}\label{eq:adjoint-state-dynamics-transformed}
    \frac{d}{dt}\widetilde{\bp} &= - (DL^{-1}(\widetilde{\bu}))^* [D\boldf(t, L^{-1}(\widetilde{\bu}))]^* DL(L^{-1}(\widetilde{\bu}))^* \widetilde{\bp} \\
    & \qquad - (DL^{-1}(\widetilde{\bu}))^* \Big(D^2L(L^{-1}(\widetilde{\bu})) \boldf(t,L^{-1}(\widetilde{\bu}) )\Big) ^* \widetilde{\bp}, \nonumber
\end{align}
where the second derivative, $D^2L$, is defined as follows. Since $L$ is a map $U \rightarrow U$, its first derivative is a map $DL: U \rightarrow L(U,U)$; thus, its second derivative is a map
$$ D^2L: U \rightarrow L(U, L(U,U)).$$
Evaluating this map at $L^{-1}(\widetilde{\bu})$ yields a linear map
$$ D^2L(L^{-1}(\widetilde{\bu})) : U \rightarrow L(U,U). $$
Subsequently, evaluating this map at $\boldf(t,L^{-1}(\widetilde{\bu}) )$ yields an operator in $L(U,U)$, i.e.,
$$ D^2L(L^{-1}(\widetilde{\bu})) \boldf(t,L^{-1}(\widetilde{\bu}) ) \in L(U,U). $$
The adjoint of this operator is the quantity that appears in \eqref{eq:adjoint-state-dynamics-transformed}. We refer to equations \eqref{eq:U-state-dynamics-transformed}-\eqref{eq:adjoint-state-dynamics-transformed} as the preconditioned adjoint system induced from a transformation of the state dynamics.

A special case of this preconditioned adjoint system is when $L$ is linear, $\widetilde{\bu} = L(\bu) = A\bu$, where $A \in L(U,U)$ is invertible. In this case, since the second derivative of $L$ vanishes, the system simplifies to
\begin{subequations}\label{eq:adjoint-system-linear-transformation}
    \begin{align}
        \frac{d}{dt} \widetilde{\bu} &= A\boldf(t,A^{-1}\widetilde{\bu}), \label{eq:adjoint-system-linear-transformation-u} \\
        \frac{d}{dt}\widetilde{\bp} &= - A^{-*} [D\boldf(t, A^{-1}\widetilde{\bu})]^* A^* \widetilde{\bp},  \label{eq:adjoint-system-linear-transformation-p}
    \end{align}
\end{subequations}
where $A^{-*}$ denotes the adjoint of the inverse (equivalently, the inverse of the adjoint). We can rewrite \eqref{eq:adjoint-system-linear-transformation} so that it does not involve the inverse of $A$ or $A^*$; we do this by expressing the system in terms of $\bu$ and applying $A^*$ to both sides of \eqref{eq:adjoint-system-linear-transformation-p}, that is, 
\begin{subequations}\label{eq:adjoint-linear-original-variable}
\begin{align}
    \frac{d}{dt} A\bu &= A \boldf(t,\bu), \label{eq:adjoint-linear-original-variable-a}\\
    \frac{d}{dt} A^*\widetilde{\bp} &= - [D\boldf(t, \bu)]^* A^* \widetilde{\bp}.
\end{align}
\end{subequations}
The system \eqref{eq:adjoint-linear-original-variable} is often preferable to work with in practice, since it does not involve directly inverting $A$. We consider a commonly encountered example below.

\begin{example}[Evolution equations with mass matrices]\label{ex:mass-matrices}
\sloppy An obvious example of where the transformed equation \eqref{eq:adjoint-linear-original-variable-a} is preferred over the original state dynamics \eqref{eq:vector-space-adjoint-a} is upon semi-discretizing a PDE (e.g., using a finite element semi-discretization), where the finite-dimensional evolution equation involves an invertible mass matrix $M$. In such a case, in \eqref{eq:adjoint-linear-original-variable-a} with $A = M$, the vector field $\boldf$ is not formed explicitly; rather, the vector field $\boldF(t,\bu) := M\boldf(t,\bu)$ is. Expressing the preconditioned adjoint system \eqref{eq:adjoint-linear-original-variable} in terms of $\boldF$ gives the expression
\begin{align*}
    \frac{d}{dt} M\bu &= \boldF(t,\bu), \\
    \frac{d}{dt} M^*\widetilde{\bp} &= - [D\boldF(t, \bu)]^* \widetilde{\bp}.
\end{align*}
For a more detailed discussion of how semi-discretization, particularly mass matrices, is incorporated into adjoint systems, see \cite{TrSoLe2024}. However, note that in \cite{TrSoLe2024}, we considered only a fixed (state-independent) mass matrix; however, the discussion of this section applies to generally nonlinear transformations of the state dynamics and, in particular, applies to systems with state-dependent mass matrices, which arise in, for example, Lagrangian hydrodynamics \cite{DoKoRi2012}, elastic multi-body dynamics \cite{PaVo92}, and rigid body dynamics \cite{VoGr21}.
\end{example}

\textbf{Adjoint preconditioning induced from state dynamics as a symplectic automorphism.}
Now, we will interpret this form of adjoint preconditioning, induced from a transformation of the state dynamics, as a canonical symplectomorphism. Note that the transformation of the standard adjoint system \eqref{eq:vector-space-adjoint-a}-\eqref{eq:vector-space-adjoint-b} into the preconditioned adjoint system \eqref{eq:U-state-dynamics-transformed}-\eqref{eq:adjoint-state-dynamics-transformed} arises from the following transformation of $T^*U = U \times U^*$, 
$$ (\bu, \bp) \mapsto (\widetilde{\bu}, \widetilde{\bp}) = (L(\bu), (DL^{-1}(\widetilde{\bu}))^*\bp). $$
This mapping is precisely the cotangent lift $T^*L: T^*U \rightarrow T^*U$ of the map $L: U \rightarrow U$ (for a discussion of the geometry of the cotangent bundle and lifts, see \cite{YaIs1973, MaRa1999}).

Denote the canonical symplectic form on $T^*U$ by
\begin{equation}\label{eq:canonical-symplectic-form}
     \Omega = - \llangle d\bp \wedge d\bu\rrangle,
\end{equation}
whose action on $(\bu_1,\bp_1), (\bu_2, \bp_2) \in U \times U^*$ is given by
$$ \Omega \cdot [(\bu_1,\bp_1), (\bu_2, \bp_2)] = -\llangle \bp_1, \bu_2\rrangle + \llangle \bp_2,\bu_1\rrangle. $$
Then, since $T^*L$ is a cotangent lift, it immediately follows that $\Omega$ is preserved by this map \cite{MaRa1999}, $(T^*L)^{\wedge}\Omega = \Omega$ (here, $(\ \cdot\ )^{\wedge}$ denotes the pullback of a map; we do not use the standard notation $(\ \cdot\ )^*$ since this was used throughout to denote the adjoint of an operator). In other words, $T^*L: (T^*U, \Omega) \rightarrow (T^*U, \Omega)$ is a symplectic automorphism (i.e., it is a symplectic diffeomorphism of $T^*U$ to itself, equipped with the same symplectic form in both the domain and codomain).

\sloppy Furthermore, recall that the standard adjoint system \eqref{eq:vector-space-adjoint-a}-\eqref{eq:vector-space-adjoint-b} is a (time-dependent) Hamiltonian system with respect to the canonical symplectic form and Hamiltonian $H(t,\bu,\bp) = \llangle\bp,\boldf(t,\bu)\rrangle$. Similarly, the preconditioned adjoint system \eqref{eq:U-state-dynamics-transformed}-\eqref{eq:adjoint-state-dynamics-transformed} is a Hamiltonian system with respect to the canonical symplectic form and Hamiltonian $H_L$, given by pulling back $H$ through the inverse of $T^*L$ (where we trivially extend $T^*L$ to a time-dependent map by $T^*L: [0,T] \times (T^*U, \Omega) \rightarrow [0,T] \times (T^*U, \Omega)$, $T^*L: (t,\bu,\bp) \mapsto (t, L(\bu), (DL^{-1}(\widetilde{\bu})\bp)$). That is,
$$ H_L(t,\widetilde{\bu},\widetilde{\bp}) := H(t,L^{-1}(\widetilde{\bu}),(DL(L^{-1}(\widetilde{\bu})))^*\widetilde{\bp}) = \llangle \widetilde{\bp}, DL( L^{-1} (\widetilde{\bu}))\boldf(t,L^{-1}(\widetilde{\bu})) \rrangle . $$
It follows that $T^*L$ is an extended symplectic automorphism, i.e.,
$$ \Omega + dH \wedge dt = (T^*L)^{\wedge} (\Omega + dH_L \wedge dt),$$
mapping the standard adjoint system \eqref{eq:vector-space-adjoint-a}-\eqref{eq:vector-space-adjoint-b} to the preconditioned adjoint system \eqref{eq:U-state-dynamics-transformed}-\eqref{eq:adjoint-state-dynamics-transformed}. The transformation is depicted in \Cref{fig:prec_state}.

\begin{figure}[H]
\begin{center}
\includegraphics[width=150mm]{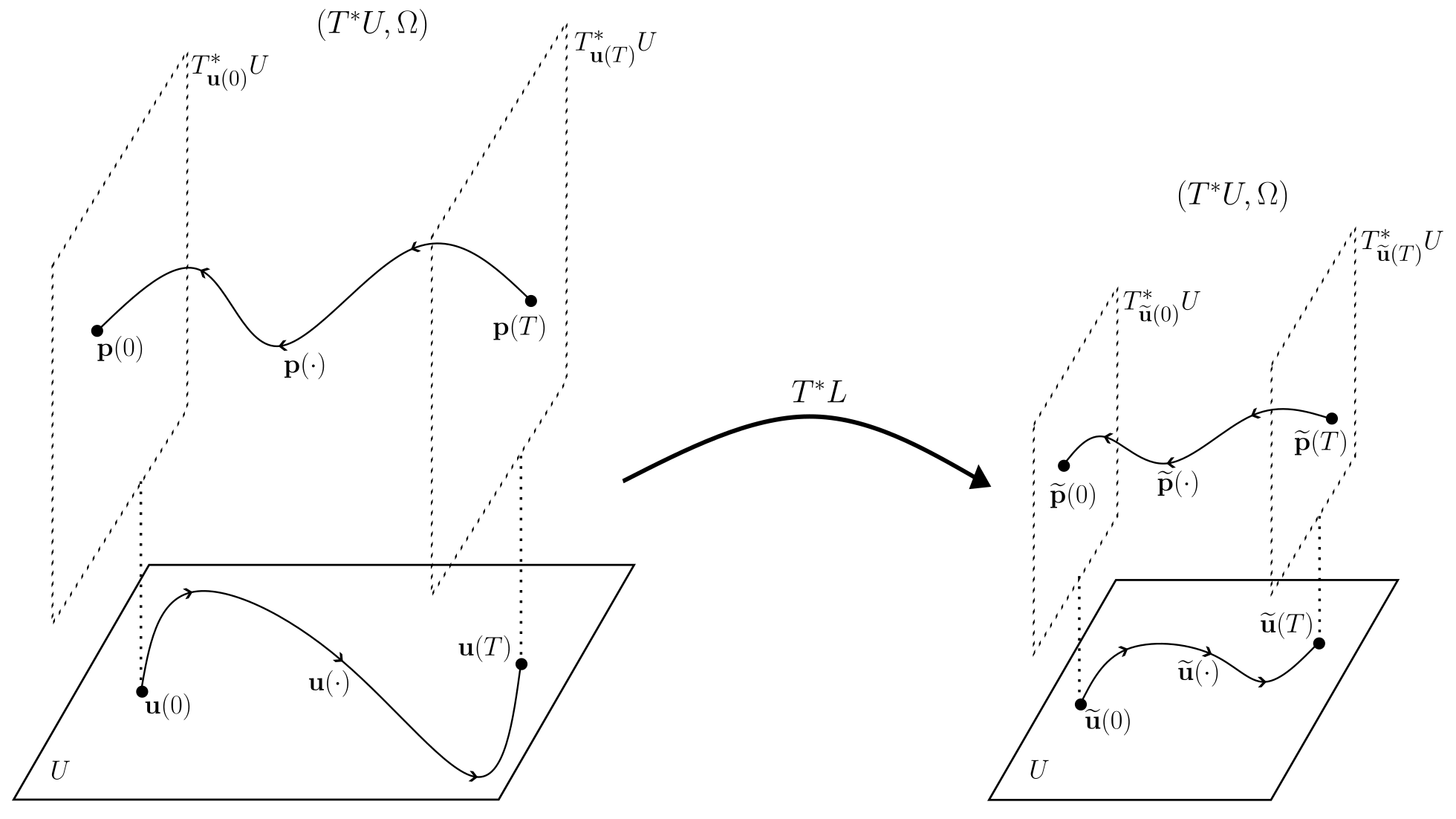}
\caption{Schematic of the mapping arising from the preconditioning of the adjoint system induced by a transformation of the state dynamics. The base space and the fibers are transformed under this map.}
\label{fig:prec_state}
\end{center}
\end{figure}

Finally, since $T^*L$ is an extended symplectic automorphism, the following proposition immediately follows. Let $\delta\widetilde{\bu}$ be the solution to the variational equation, given by linearizing the transformed state dynamics \eqref{eq:U-state-dynamics-transformed}, i.e.,
\begin{align}\label{eq:variational-eqn-nonlinear-transform}
        \frac{d}{dt} \delta \widetilde{\bu} &= DL( L^{-1} (\widetilde{\bu}))D\boldf(t,L^{-1}(\widetilde{\bu})) DL^{-1}(\widetilde{\bu})\delta\widetilde{\bu}  \\
        & \qquad + \Big(D^2L(L^{-1}(\widetilde{\bu})) \boldf(t,L^{-1}(\widetilde{\bu}) )\Big) DL^{-1}(\widetilde{\bu}) \delta \widetilde{\bu}. \nonumber
\end{align}
Then, we have the following adjoint-variational conservation law.
\begin{prop}\label{prop:prec-1}
    Let $\widetilde{\bu}$ satisfy the transformed state dynamics \eqref{eq:U-state-dynamics-transformed}, $\delta\widetilde{\bu}$ satisfy the corresponding variational equation \eqref{eq:variational-eqn-nonlinear-transform}, and $\widetilde{\bp}$ satisfy the corresponding preconditioned adjoint equation \eqref{eq:adjoint-state-dynamics-transformed}. Then,
    \begin{equation}
        \frac{d}{dt} \llangle \widetilde{\bp}(t), \delta\widetilde{\bu}(t)\rrangle = 0.
    \end{equation} \qed
\end{prop}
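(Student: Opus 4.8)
The plan is to bypass any direct differentiation of $\llangle\widetilde{\bp},\delta\widetilde{\bu}\rrangle$ — which would force one to expand the $D^2L$ terms in \eqref{eq:adjoint-state-dynamics-transformed} and \eqref{eq:variational-eqn-nonlinear-transform} and verify a cancellation — and instead exploit the naturality already built into the construction. At the conceptual level the word ``immediately'' is justified as follows: the paper has shown that $T^*L$ is an extended symplectic automorphism carrying the standard adjoint Hamiltonian system \eqref{eq:vector-space-adjoint-a}--\eqref{eq:vector-space-adjoint-b} to the preconditioned one \eqref{eq:U-state-dynamics-transformed}--\eqref{eq:adjoint-state-dynamics-transformed}; since the adjoint--variational quadratic invariant is itself a consequence of the symplecticity of the Hamiltonian flow (as noted after \Cref{prop:ManifoldQuadraticInvariant}, following \cite{TrLe2024}), and symplecticity is preserved under a symplectomorphism, the preconditioned flow inherits the same invariant. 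Concretely and self-containedly, I would observe that the triple $(\widetilde{\bu},\delta\widetilde{\bu},\widetilde{\bp})$ is nothing but the canonical state--variational--adjoint triple for the single transformed vector field $\widetilde{\boldf}(t,\widetilde{\bu}):=DL(L^{-1}(\widetilde{\bu}))\boldf(t,L^{-1}(\widetilde{\bu}))$ of \eqref{eq:U-state-dynamics-transformed}: by construction \eqref{eq:adjoint-state-dynamics-transformed} is exactly $d\widetilde{\bp}/dt=-[D\widetilde{\boldf}]^*\widetilde{\bp}$ and \eqref{eq:variational-eqn-nonlinear-transform} is exactly $d\,\delta\widetilde{\bu}/dt=D\widetilde{\boldf}\,\delta\widetilde{\bu}$. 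Hence \Cref{prop:ManifoldQuadraticInvariant} applies with $\boldf$ replaced by $\widetilde{\boldf}$ and delivers the claim at once; the only point to check is that \Cref{prop:ManifoldQuadraticInvariant}, stated for an autonomous field, extends to the time-dependent $\widetilde{\boldf}$, which is immediate since its proof uses only the canonical form on $T^*U$ and the adjoint-form Hamiltonian $H_L(t,\cdot,\cdot)=\llangle\widetilde{\bp},\widetilde{\boldf}(t,\cdot)\rrangle$, whose time-dependence enters merely as a parameter.

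I would also record the equivalent, more geometric route, which makes the symplectomorphism picture explicit. Under the cotangent lift one has $\widetilde{\bp}=(DL^{-1}(\widetilde{\bu}))^*\bp=(DL(\bu))^{-*}\bp$, while under the tangent lift a variation transforms as $\delta\widetilde{\bu}=DL(\bu)\,\delta\bu$. The standard pairing is then preserved \emph{pointwise}: using the defining property of the adjoint with respect to $\llangle\cdot,\cdot\rrangle$,
\begin{equation*}
    \llangle\widetilde{\bp},\delta\widetilde{\bu}\rrangle=\llangle (DL(\bu))^{-*}\bp,\,DL(\bu)\,\delta\bu\rrangle=\llangle\bp,\,(DL(\bu))^{-1}DL(\bu)\,\delta\bu\rrangle=\llangle\bp,\delta\bu\rrangle .
\end{equation*}
Since the right-hand side is constant in $t$ by \Cref{prop:ManifoldQuadraticInvariant} applied to the original system \eqref{eq:vector-space-adjoint-a}--\eqref{eq:vector-space-adjoint-b} together with its variational equation, the left-hand side is likewise constant, which is the assertion.

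The step that makes either route rigorous, and which I expect to be the main (and essentially only) obstacle, is verifying that the tangent lift $TL$ carries solutions of the \emph{original} variational equation to solutions of \eqref{eq:variational-eqn-nonlinear-transform}; equivalently, that the identification $\delta\widetilde{\bu}=DL(\bu)\,\delta\bu$ holds \emph{along} trajectories rather than merely fiberwise. This is the exact tangent-bundle analogue of the cotangent statement the paper has already established for $T^*L$ and the adjoint equation, and it is precisely where the $D^2L$ term in \eqref{eq:variational-eqn-nonlinear-transform} is accounted for: differentiating $\delta\widetilde{\bu}=DL(\bu)\,\delta\bu$ in $t$ and substituting $\dot{\bu}=\boldf$ together with the original variational equation $d\,\delta\bu/dt=D\boldf(t,\bu)\,\delta\bu$ reproduces exactly the two terms of \eqref{eq:variational-eqn-nonlinear-transform}, the second of which arises from the $t$-derivative of $DL(\bu)$ via the chain rule. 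Once this intertwining of the variational flows is confirmed and the second-order term is seen to land correctly, the conservation law follows with no further calculation.
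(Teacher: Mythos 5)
Your proposal is correct and takes essentially the same route as the paper, which gives no written proof beyond the remark that $T^*L$ is an extended symplectic automorphism carrying the canonical adjoint system to the preconditioned one, so that the conservation law ``immediately follows.'' Your two concrete verifications --- applying \Cref{prop:ManifoldQuadraticInvariant} directly to the transformed vector field $\widetilde{\boldf}$ (noting that \eqref{eq:adjoint-state-dynamics-transformed} and \eqref{eq:variational-eqn-nonlinear-transform} are exactly its adjoint and variational equations), and the pointwise identity $\llangle\widetilde{\bp},\delta\widetilde{\bu}\rrangle=\llangle\bp,\delta\bu\rrangle$ under the tangent and cotangent lifts --- are both sound and simply make explicit what the paper leaves implicit.
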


\subsubsection{Adjoint preconditioning induced by a duality pairing.}\label{sec:induced-from-duality}
Compared to the previous form of adjoint preconditioning, which was induced from a transformation of the state dynamics, we would also like to consider preconditioning of the adjoint equation independent of the state dynamics. Our motivation for this consideration is that, for adjoint-based optimization, one may wish to reshape the derivative obtained through the adjoint backpropagation without changing the dynamics on the base space. We will subsequently interpret this form of adjoint preconditioning as a symplectic isomorphism between two different symplectic spaces, $(T^*U, \Omega) \rightarrow (T^*U, \Omega_P)$ (here, the underlying space is the same, but the symplectic forms are different).

Consider the state dynamics on $U$ \eqref{eq:vector-space-adjoint-a} with its associated adjoint equation \eqref{eq:vector-space-adjoint-b}. Note that the adjoint equation \eqref{eq:vector-space-adjoint-b} on $U^*$ is formulated with respect to the standard duality pairing $\llangle\cdot,\cdot\rrangle$. However, one could instead formulate the adjoint equation with respect to a different duality pairing $\langle\cdot,\cdot\rangle_P$. Now, if $\langle\cdot,\cdot\rangle_P: U^* \times U \rightarrow \mathbb{R}$ is a duality pairing (see \Cref{def:duality-pairing}), there exists an automorphism (i.e., invertible linear transformation) $P$ of $U$ such that $\langle\cdot,\cdot\rangle_P$ is related to the standard duality pairing by \cite{TrSoLe2024}
$$ \langle\cdot,\cdot\rangle_P = \llangle\cdot,P\, \cdot\rrangle. $$
The adjoint equation with respect to the duality pairing $\langle\cdot,\cdot\rangle_P$ can be expressed \cite[Eq. 3.2b]{TrSoLe2024})
    \begin{align}\label{eq:two-scale-adjoint-DE-prec}
        \frac{d}{dt} \boldsymbol{\xi} &= - [D\boldf(t,\bu)]^{*P} \boldsymbol{\xi} \\
        &=- P^{-*}[D\boldf(t,\bu)]^* P^* \boldsymbol{\xi}, \nonumber
    \end{align}
where, here, $[\ \cdot\ ]^{*P}$ denotes the adjoint with respect to $\langle\cdot,\cdot\rangle_P$, and in the second equality, we have expressed this in terms of the adjoint with respect to the original pairing, $[\ \cdot\ ]^*$. We interpret \eqref{eq:two-scale-adjoint-DE-prec} as a preconditioning of the adjoint system with respect to a modified duality pairing. We will discuss an example of this type of preconditioning in the context of coupled evolution equations in \Cref{sec:scale-preconditoning-coupled}.

As we will now discuss, the transformation $\bp \rightarrow \boldsymbol{\xi}$ induced from the duality pairing $\langle\cdot,\cdot\rangle_P$ corresponds to a symplectomorphism of the cotangent bundle, transforming the fibers while leaving the base space $U$ invariant.

\textbf{Adjoint preconditioning induced by duality pairing as a symplectic isomorphism.}
Note that the transformation of the adjoint system \eqref{eq:vector-space-adjoint-a}-\eqref{eq:vector-space-adjoint-b} into the preconditioned adjoint system \eqref{eq:vector-space-adjoint-a}, \eqref{eq:two-scale-adjoint-DE-prec} is given by the map
\begin{subequations}\label{eq:P-bar}
    \begin{align}
    \overline{P}: T^*U &\rightarrow T^*U, \\
                (\bu, \bp) & \mapsto (\bu, \boldsymbol{\xi}) = (\bu, P^{-*}\bp).
    \end{align}
\end{subequations}
Note that $\overline{P}$ transforms only the fibers of $T^*U$, while leaving the base space $U$ invariant. We can interpret this as a symplectic isomorphism of two different symplectic spaces. Namely, let $\Omega$ denote the canonical symplectic form \eqref{eq:canonical-symplectic-form} on $T^*U$ as before. Let $\Omega_P$ denote the symplectic form on $T^*U$ given by
$$ \Omega_P = - \langle d\boldsymbol{\xi} \wedge d\bu\rangle_P, $$
whose action on $(\bu_1,\boldsymbol{\xi}_1), (\bu_2, \boldsymbol{\xi}_2) \in U \times U^*$ is given by
$$ \Omega_P \cdot [(\bu_1,\boldsymbol{\xi}_1), (\bu_2, \boldsymbol{\xi}_2)] = -\langle \boldsymbol{\xi}_1, \bu_2\rangle_P + \langle \boldsymbol{\xi}_2,\bu_1\rangle_P. $$
Then, $\overline{P}: (T^*U, \Omega) \rightarrow (T^*U, \Omega_P)$ is a symplectic isomorphism (between different symplectic spaces sharing the same underlying vector space), i.e.,
$$ \overline{P}^{\wedge}\Omega_P = \Omega. $$

As in the previous case, the associated time-dependent map is an extended symplectic isomorphism between the standard adjoint system and the preconditioned adjoint system, where the standard adjoint system is a Hamiltonian system with respect to the canonical symplectic form and Hamiltonian $H(t,\bu,\bp) = \llangle\bp,\boldf(t,\bu)\rrangle$, whereas the preconditioned adjoint system is a Hamiltonian system with respect to the symplectic form $\Omega_P$ and Hamiltonian $H_P$ related to $H$ by
$$ H_P(t,\bu,\boldsymbol{\xi}) = H(t,\bu,P^*\boldsymbol{\xi}) = \llangle \boldsymbol{\xi}, P \boldf(t,\bu)\rrangle = \langle \boldsymbol{\xi}, \boldf(t,\bu)\rangle_P . $$
The transformation is depicted in \Cref{fig:prec_duality}.

\begin{figure}[H]
\begin{center}
\includegraphics[width=150mm]{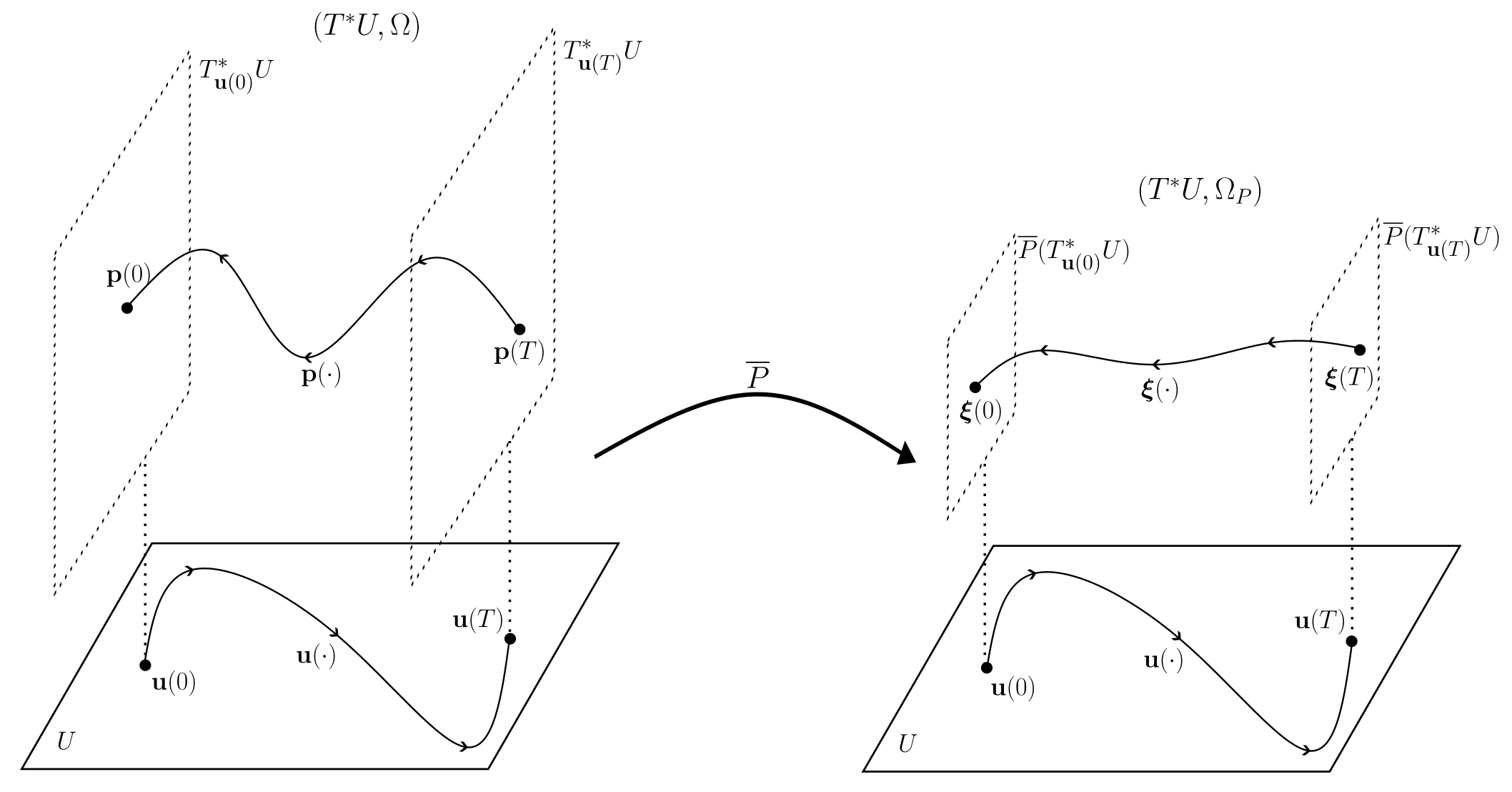}
\caption{Schematic of the mapping arising from the preconditioning of the adjoint system induced by a duality pairing. Only the fibers are transformed under this map, while the base space is left invariant.}
\label{fig:prec_duality}
\end{center}
\end{figure}

This yields an analogous conservation law to \Cref{prop:prec-1}.
\begin{prop}\label{prop:prec-2}
    Let $\bu$ satisfy the state dynamics \eqref{eq:vector-space-adjoint-a}, let $\delta\bu$ satisfy the associated variational equation, and let $\boldsymbol{\xi}$ satisfy the preconditioned adjoint equation with respect to the duality pairing $\langle\cdot,\cdot\rangle_P$, i.e., \eqref{eq:two-scale-adjoint-DE-prec}. Then,
    \begin{equation}
        \frac{d}{dt} \langle \boldsymbol{\xi}(t), \delta{\bu}(t)\rangle_P = 0.
    \end{equation} \qed
\end{prop}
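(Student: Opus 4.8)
The plan is to reduce the claim to the canonical quadratic invariant of \Cref{prop:ManifoldQuadraticInvariant} by transporting the assertion through the symplectic isomorphism $\overline{P}$ of \eqref{eq:P-bar}, in exact analogy with how \Cref{prop:prec-1} followed from $T^*L$ being an (extended) symplectic automorphism. Since $\overline{P}$ leaves the base point $\bu$ untouched and acts on the fibers by $\boldsymbol{\xi} = P^{-*}\bp$ (equivalently $\bp = P^*\boldsymbol{\xi}$), I would first verify that a solution $\boldsymbol{\xi}$ of the preconditioned adjoint equation \eqref{eq:two-scale-adjoint-DE-prec} corresponds to a solution $\bp := P^*\boldsymbol{\xi}$ of the standard adjoint equation \eqref{eq:vector-space-adjoint-b}. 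Because $P$ is a fixed (state- and time-independent) automorphism in this section, differentiating gives $\dot{\bp} = P^*\dot{\boldsymbol{\xi}} = -P^*\big(P^{-*}[D\boldf(t,\bu)]^*P^*\boldsymbol{\xi}\big) = -[D\boldf(t,\bu)]^*\bp$, which is exactly \eqref{eq:vector-space-adjoint-b}. Meanwhile, since the preconditioning leaves the state dynamics \eqref{eq:vector-space-adjoint-a} invariant, the variational variable $\delta\bu$ satisfies the same variational equation $\tfrac{d}{dt}\delta\bu = D\boldf(t,\bu)\delta\bu$ in both the preconditioned and canonical settings.

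The second step is to identify the two pairings. Using $\langle\cdot,\cdot\rangle_P = \llangle\cdot, P\,\cdot\rrangle$ together with the defining property of the standard adjoint $P^*$, I would write
\[
\langle\boldsymbol{\xi}, \delta\bu\rangle_P = \llangle\boldsymbol{\xi}, P\,\delta\bu\rrangle = \llangle P^*\boldsymbol{\xi}, \delta\bu\rrangle = \llangle\bp, \delta\bu\rrangle .
\]
Since the right-hand side is precisely the canonical adjoint-variational pairing for the triple $(\bu, \delta\bu, \bp)$ solving the state dynamics, its variational equation, and the standard adjoint equation, \Cref{prop:ManifoldQuadraticInvariant} (applied in its natural time-dependent specialization, as in the preceding discussion) yields $\tfrac{d}{dt}\llangle\bp, \delta\bu\rrangle = 0$, and hence $\tfrac{d}{dt}\langle\boldsymbol{\xi}, \delta\bu\rangle_P = 0$, as claimed.

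As a useful cross-check, the result also follows from a one-line direct computation that never passes through $\bp$. Differentiating by the product rule (valid since $P$ is constant), then substituting $\dot{\boldsymbol{\xi}} = -[D\boldf(t,\bu)]^{*P}\boldsymbol{\xi}$ and $\dot{\delta\bu} = D\boldf(t,\bu)\delta\bu$ and invoking the defining property of the $P$-adjoint, $\langle [D\boldf(t,\bu)]^{*P}\boldsymbol{\xi}, \delta\bu\rangle_P = \langle\boldsymbol{\xi}, D\boldf(t,\bu)\,\delta\bu\rangle_P$, the two resulting terms cancel identically.

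I do not anticipate a serious obstacle; the content is essentially the naturality of the construction under $\overline{P}$. The one point requiring care is the bookkeeping of adjoint conventions — distinguishing $P^*$ from $P^{-*}$ and $[\,\cdot\,]^*$ from $[\,\cdot\,]^{*P}$ — and the (crucial) reliance on $P$ being a constant automorphism here, so that $\bp = P^*\boldsymbol{\xi}$ differentiates cleanly and the product rule for $\langle\cdot,\cdot\rangle_P$ holds. This is exactly the hypothesis that must be relaxed, via a vector bundle connection, in the state-dependent generalization of \Cref{sec:fiberwise-duality}.
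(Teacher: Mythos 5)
Your proposal is correct and follows essentially the same route the paper intends: since \Cref{prop:prec-2} is stated with only a \qed, its justification is precisely the preceding observation that $\overline{P}$ is an (extended) symplectic isomorphism carrying the canonical adjoint system to the preconditioned one, which is exactly your reduction via $\bp = P^*\boldsymbol{\xi}$ and $\langle\boldsymbol{\xi},\delta\bu\rangle_P = \llangle\bp,\delta\bu\rrangle$ to \Cref{prop:ManifoldQuadraticInvariant}. Your direct-computation cross-check is also sound and mirrors the calculation the paper carries out explicitly only in the harder state-dependent case of \Cref{sec:fiberwise-duality}, where the extra $DP$ term you flag indeed appears.
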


\subsubsection{Adjoint preconditioning induced by a state-dependent duality pairing}\label{sec:fiberwise-duality}
In \Cref{sec:induced-from-duality}, we chose a non-standard duality pairing $\langle\cdot,\cdot\rangle_P$ in order to ``reshape" the adjoint variable $\bp \rightarrow \boldsymbol{\xi} = P^{-*} \bp$ and its corresponding dynamics. This was a global transformation in the sense that $P$ did not depend on the base point. In order to provide an additional degree of flexibility, we can allow this transformation to be local, i.e., depend on the state $\bu$.
Thus, in this section, we will generalize \Cref{sec:induced-from-duality} to allow for a modified duality pairing which depends on the state $\bu$; roughly speaking, we will generalize the automorphism $P: U \rightarrow U$ of \Cref{sec:induced-from-duality} to be a state-dependent automorphism $P(\bu): U \rightarrow U$. As we will see, the corresponding adjoint system will have an induced Hamiltonian structure and interestingly, the adjoint equation can be naturally expressed in terms of a vector bundle connection on $T^*U$ defined in terms of the state-dependent automorphism. To formalize this, we introduce the following definitions.

Let $T^*U \oplus_U TU$ be the Whitney sum of the cotangent and tangent bundles of $U$, fibered over $U$; the fiber over $\bu \in U$ is simply $T^*_\bu U \times T_\bu U$. We denote a typical element of this bundle by $(\bu, \bp,\delta \bu) \in T^*U \oplus_U TU$, i.e., $\bp \in T^*_\bu U$ and $\delta \bu \in T_\bu U$. Note that since $U$ is a vector space, this bundle has the trivialization $T^*U \oplus_U TU = U \times U^* \times U \ni (\bu,\bp,\delta \bu)$. We will interpret working with the bundle $TU \oplus_U T^*U$ as the \emph{fiberwise} picture, whereas working with the trivialization $U \times U \times U^*$ as the \emph{state-dependent} picture; both pictures are, of course, equivalent.

\begin{definition}[Fiberwise duality pairing]\label{def:fiber-duality}
    A \textbf{fiberwise (or state-dependent) duality pairing} is an assignment to each fiber of $T^*U \oplus_U TU$ a duality pairing; that is, for each $\bu \in U$, the fiber of this bundle is equipped with a non-degenerate bilinear form
    \begin{equation}\label{eq:fiber-duality-pairing}
        \langle \cdot,\cdot\rangle_{P(\bu)}: T_\bu^*U \times T_\bu U \rightarrow \mathbb{R}.
    \end{equation}
    Furthermore, we assume \eqref{eq:fiber-duality-pairing} varies smoothly with $\bu$.

    Equivalently, in the state-dependent picture, this is specified by a smooth map
    \begin{equation}\label{eq:fiber-duality-map}
        P: U \rightarrow L(U,U),
    \end{equation}
    such that $P(\bu)$ is an automorphism for each $\bu \in U$. The two pictures are related by
    $$ \langle \bp, \delta \bu\rangle_{P(\bu)} = \llangle \bp, P(\bu) \delta \bu\rrangle, $$
    where $\llangle\cdot,\cdot\rrangle$ is the standard duality pairing of $T^*U$ with $TU$.
\end{definition}

It is useful for intuition to provide coordinate expressions for the  following discussion. We coordinatize $T^*U \oplus_U TU$ with coordinates $(u^\alpha, p_\beta, \delta u^\gamma)$ such that the standard duality pairing is simply
$$ \llangle \bp, \delta\bu\rrangle = p_\alpha \delta u^\alpha; $$
throughout, we use the summation convention that repeated lower and upper indices are summed over. In coordinates, a fiberwise duality pairing is given by
$$ \langle \bp,\delta \bu\rangle_{P(\bu)} = p_\beta P^\beta_\alpha(\bu) \delta u^\alpha. $$
As we will see, the derivative of $P$ will arise in the discussion below. The Fr\'{e}chet derivative of $P: U \rightarrow L(U,U)$ is a mapping $DP: U \rightarrow L(U,L(U,U))$. Since $DP$ is a mapping valued in further nested spaces of mappings, we denote its evaluation on successive arguments as
\begin{align*}
    DP: U &\rightarrow L(U, L(U,U)), \\
        \bu &\mapsto DP(\bu): U \rightarrow L(U,U), \\
        & \hspace{5.7em} \bv \mapsto [DP(\bu)\bv ] (\ \cdot\ ): U \rightarrow U, \\
        & \hspace{14.4em} \delta\bu \mapsto [DP(\bu)\bv ] \delta\bu \in U.
\end{align*}
Note that both $[DP(\bu)\ \cdot\ ] \delta\bu$ and $[DP(\bu)\bv](\ \cdot\ )$ are elements of $L(U,U)$. In coordinates, the $\beta^{th}$ component of the derivative of $P$ is given by
$$ \Big([DP(\bu)\bv ] \delta\bu\Big)^\beta = D_\gamma P^\beta_\alpha(\bu)  v^\gamma \delta u^\alpha, $$
where $D_\gamma := \partial/\partial u^\gamma$. Now, we consider a state-dependent analogue of the transformation \eqref{eq:P-bar}; namely, consider the map
\begin{subequations}\label{eq:P-bar-fiberwise}
    \begin{align}
    \overline{P}: T^*U &\rightarrow T^*U, \\
                (\bu, \bp) & \mapsto (\bu, \boldsymbol{\xi}) := (\bu, P(\bu)^{-*}\bp).
    \end{align}
\end{subequations}
We will define the adjoint system by pulling back the canonical symplectic form and the adjoint Hamiltonian through the inverse of this map,
$$ \overline{P}^{-1}:  (\bu, \boldsymbol{\xi}) \mapsto (\bu, P(\bu)^*\boldsymbol{\xi}). $$
Let $\Omega$ be the canonical symplectic form on $T^*U$ as before. We define the following two-form on $T^*U$ as the pullback of $\Omega$ by $\overline{P}^{-1}$,
\begin{equation}\label{eq:P-fiber-form}
    \Omega_P := (\overline{P}^{-1})^{\wedge} \Omega.
\end{equation}
Unlike the previous case with a global transformation discussed in \Cref{sec:induced-from-duality}, it is not immediately obvious that $\Omega_P$ is indeed a symplectic form. Thus, we will prove this explicitly.
\begin{prop}
    The two-form $\Omega_P$ \eqref{eq:P-fiber-form} is a symplectic form, i.e., is closed and non-degenerate.
    \begin{proof}
        Clearly, $\Omega_P$ is closed as the pullback of a closed form. We will verify that $\Omega_P$ is non-degenerate in coordinates; this will be useful as it will provide a coordinate expression for $\Omega_P$, which we will subsequently use to compute Hamilton's equations.

        Using the expression \eqref{eq:canonical-symplectic-form} for the canonical symplectic form, its pullback by $\overline{P}^{-1}$ is given in coordinates by
        \begin{align}\label{eq:P-fiber-form-coordinates}
            \Omega_P &= (\overline{P}^{-1})^{\wedge} \Omega = - \llangle d(P(\bu)^*\boldsymbol{\xi}) \wedge d\bu \rrangle = -d(P^\beta_\alpha(\bu) \xi_\beta) \wedge du^\alpha \\
            &= -P^\beta_\alpha(\bu) d\xi_\beta \wedge du^\alpha - \xi_\beta D_\gamma P^\beta_\alpha(\bu) du^\gamma \wedge du^\alpha. \nonumber
        \end{align}
        Interestingly, in the last line of expression \eqref{eq:P-fiber-form-coordinates}, the first term is what we expect from a global change of duality pairing, as we saw in \Cref{sec:induced-from-duality}; however, there is now an additional term arising from the fact that the duality pairing varies with respect to the basepoint. As we will see, this will introduce an additional term in the adjoint equation. Interestingly, this additional term can be thought of as corresponding to a connection on the vector bundle $T^*U \rightarrow U$.
        
        Now, let $Z := z^\alpha \frac{\partial}{\partial u^\alpha} + \zeta_\beta \frac{\partial}{\partial \xi_\beta}$ be any vector satisfying $i_Z \Omega_P = 0$. We verify that $\Omega_P$ is non-degenerate, i.e., $i_Z \Omega_P = 0$ implies $Z = 0$. Compute
        \begin{align*}
           0 &= i_Z \Omega_P \\
           &= -P^\beta_\alpha(\bu) \zeta_\beta du^\alpha  + P^\beta_\alpha(\bu) z^\alpha d\xi_\beta - \xi_\beta D_\gamma P^\beta_\alpha(\bu) z^\gamma du^\alpha + \xi_\beta D_\gamma P^\beta_\alpha(\bu) z^\alpha du^\gamma.
        \end{align*}
        By linear independence of the coordinate differentials $du^\alpha$ and $d\xi_\beta$, the second term above implies $P^\beta_\alpha(\bu) z^\alpha = 0$ for all $\beta$. Since $P(\bu)$ is invertible, this implies $z^\alpha = 0$ for all $\alpha$. Substituting this into the remaining three terms, only the first term remains, which yields $P^\beta_\alpha(\bu) \zeta_\beta = 0$ for all $\alpha$. Again, since $P(\bu)^*$ is invertible, this implies $\zeta_\beta = 0$ for all $\beta$. Thus, $Z = 0$, i.e., $\Omega$ is non-degenerate. 
    \end{proof}
\end{prop}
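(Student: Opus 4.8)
The plan is to exploit the definition $\Omega_P := (\overline{P}^{-1})^{\wedge}\Omega$ as the pullback of the canonical symplectic form under the fiber-preserving map $\overline{P}^{-1}$. Closedness is then immediate and formal: pullback commutes with the exterior derivative, so $d\Omega_P = (\overline{P}^{-1})^{\wedge}(d\Omega) = 0$ because $\Omega$ is closed. For non-degeneracy there is a clean abstract route, since $P(\bu)$ is an automorphism varying smoothly in $\bu$, the map $\overline{P}$ is a vector-bundle automorphism of $T^*U$ over the identity on $U$, hence a diffeomorphism, and the pullback of a non-degenerate two-form by a diffeomorphism is non-degenerate. I would nonetheless carry out the verification in coordinates, as this simultaneously produces the explicit local expression for $\Omega_P$ needed later to write Hamilton's equations.

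Concretely, I would pull $\Omega = -\llangle d\bp \wedge d\bu\rrangle$ back through $\overline{P}^{-1}:(\bu,\boldsymbol{\xi}) \mapsto (\bu, P(\bu)^*\boldsymbol{\xi})$. Since the base coordinate is unchanged and the fiber coordinate becomes $p_\alpha = P^\beta_\alpha(\bu)\xi_\beta$, the only genuine step is the Leibniz differentiation $d(P^\beta_\alpha \xi_\beta) = P^\beta_\alpha\, d\xi_\beta + \xi_\beta\, D_\gamma P^\beta_\alpha\, du^\gamma$. Wedging against $du^\alpha$ and negating yields the two-term formula \eqref{eq:P-fiber-form-coordinates}: a canonical term $-P^\beta_\alpha\, d\xi_\beta \wedge du^\alpha$ together with an additional term $-\xi_\beta D_\gamma P^\beta_\alpha\, du^\gamma\wedge du^\alpha$ arising entirely from the basepoint-dependence of $P$. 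I would emphasize that this second term, antisymmetrized in $\gamma$ and $\alpha$, is precisely the piece later interpreted as a connection.

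For non-degeneracy I would take an arbitrary tangent vector $Z = z^\alpha \frac{\partial}{\partial u^\alpha} + \zeta_\beta \frac{\partial}{\partial \xi_\beta}$ and show that $i_Z\Omega_P = 0$ forces $Z = 0$. Contracting each term and collecting coefficients of the linearly independent basis one-forms $du^\alpha$ and $d\xi_\beta$, the coefficient of $d\xi_\beta$ is $P^\beta_\alpha z^\alpha$; invertibility of $P(\bu)$ forces $z^\alpha = 0$ for all $\alpha$. Once $z = 0$, every contribution of the extra $du^\gamma\wedge du^\alpha$ term drops out, so the only surviving constraint comes from the canonical term and reads $P^\beta_\alpha \zeta_\beta = 0$ for all $\alpha$; invertibility of $P(\bu)^*$ then forces $\zeta_\beta = 0$ for all $\beta$, giving $Z = 0$.

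The only delicate bookkeeping is the Leibniz differentiation of $P(\bu)^*\boldsymbol{\xi}$ and the careful index tracking in the antisymmetric correction term, in particular recognizing that this term, while genuinely present in $\Omega_P$, contributes nothing to the non-degeneracy argument once $z^\alpha = 0$ is established, so that non-degeneracy reduces to invertibility of $P(\bu)$ and $P(\bu)^*$. Conceptually the result is never in doubt because $\overline{P}^{-1}$ is a diffeomorphism; the reason to prefer the coordinate computation is exactly that it records the connection-like correction term for use in the subsequent derivation of Hamilton's equations.
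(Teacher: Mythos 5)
Your proposal is correct and follows essentially the same route as the paper: closedness via pullback commuting with $d$, and non-degeneracy via the identical coordinate computation (Leibniz expansion of $d(P^\beta_\alpha\xi_\beta)$, contraction with an arbitrary $Z$, and invertibility of $P(\bu)$ and $P(\bu)^*$). Your additional observation that non-degeneracy also follows abstractly because $\overline{P}^{-1}$ is a diffeomorphism is a valid shortcut the paper omits, but since you still carry out the coordinate verification to obtain \eqref{eq:P-fiber-form-coordinates}, the substance of the argument is the same.
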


Since $\Omega_P$ is a symplectic form on $T^*U$, we can thus define Hamilton's equation with respect to a given Hamiltonian. Let $H(t,\bu,\bp) = \llangle \bp, \boldf(t,\bu)\rrangle$ be the standard adjoint Hamiltonian. Then, we define the fiberwise-preconditioned adjoint Hamiltonian by pulling back $H$ through $\overline{P}^{-1}$ (trivially extended to a time-dependent map), i.e.,
\begin{align}
    H_P &:= (\overline{P}^{-1})^{\wedge} H, \\
    H_P(t,\bu,\boldsymbol{\xi}) &= H(t,\bu, P(\bu)^*\boldsymbol{\xi}) = \llangle\boldsymbol{\xi}, P(\bu) \boldf(t,\bu) \rrangle = \langle \boldsymbol{\xi}, \boldf(t,\bu) \rangle_{P(\bu)}. \nonumber
\end{align}
Unsurprisingly, the pulled back Hamiltonian is the adjoint Hamiltonian, with the standard duality pairing replaced by the fiberwise duality pairing. Now, we define the \emph{fiberwise-preconditioned} adjoint system as Hamilton's equation of motion with Hamiltonian $H_P$ and symplectic form $\Omega_P$. A straightforward calculation using the coordinate expression for the symplectic form \eqref{eq:P-fiber-form-coordinates} yields
\begin{subequations}\label{eq:fiberwise-prec-adjoint-system}
    \begin{align}
        \frac{d\bu}{dt} &= \boldf(t,\bu), \label{eq:fiberwise-prec-adjoint-system-a}\\
        \frac{d\boldsymbol{\xi}}{dt} &= - P(\bu)^{-*} [D \boldf(t,\bu)]^* P(\bu)^* \boldsymbol{\xi} - P(\bu)^{-*} [DP(\bu)\boldf(t,\bu)]^* \boldsymbol{\xi}.\label{eq:fiberwise-prec-adjoint-system-b}
    \end{align}
\end{subequations}
Comparing the fiberwise-preconditioned adjoint equation \eqref{eq:fiberwise-prec-adjoint-system-b} to the preconditioned adjoint equation \eqref{eq:two-scale-adjoint-DE-prec} using the global transformation discussed in \Cref{sec:induced-from-duality}, we now see an additional term $-P(\bu)^{-*} [DP(\bu) \boldf(t,\bu)]^* \boldsymbol{\xi}$, which arises from the fact that $P$ is not constant but varies with  respect to $\bu$; the term vanishes when $DP = 0$, i.e., the transformation is constant, recovering the previous case discussed in \Cref{sec:induced-from-duality}.

We now prove that this fiberwise-preconditioned adjoint equation satisfies an adjoint-variational conservation law with the state-dependent duality pairing.
\begin{prop}
    Let $(\bu,\boldsymbol{\xi}, \delta\bu)$ be a curve on $T^*U \oplus_U TU$ satisfying the state dynamics, fiberwise-preconditioned adjoint equation, and variational equation, i.e.,
    \begin{subequations}\label{eq:fiberwise-adjoint-variational}
    \begin{align}
        \frac{d}{dt}\bu &= \boldf(t,\bu), \\
         \frac{d}{dt} \boldsymbol{\xi} &= - P(\bu)^{-*} [D \boldf(t,\bu)]^* P(\bu)^* \boldsymbol{\xi} - P(\bu)^{-*} [DP(\bu)\boldf(t,\bu)]^* \boldsymbol{\xi},\\
        \frac{d}{dt}\delta\bu &= D\boldf(t,\bu)\delta\bu.
    \end{align}
    \end{subequations}
    Then, 
    \begin{equation}
        \frac{d}{dt} \langle \boldsymbol{\xi}(t), \delta\bu(t) \rangle_{P(\bu(t))} = 0.
    \end{equation}
    \begin{proof}
        \sloppy This follows from a direct calculation. Note that in the calculation below, an additional term arises from the derivative of the state-dependent map \eqref{eq:fiber-duality-map} in the pairing $\langle \boldsymbol{\xi}(t), \delta\bu(t) \rangle_{P(\bu(t))} = \llangle \boldsymbol{\xi}(t), P(\bu(t)) \delta\bu(t)\rrangle$; however, as we will see, this term is precisely canceled out by the additional term in the fiberwise-preconditioned adjoint equation \eqref{eq:fiberwise-prec-adjoint-system-b}. With the variables $(\bu,\boldsymbol{\xi},\delta\bu)$ below evaluated at time $t$, compute
        \begin{align*}
            \frac{d}{dt} \langle \boldsymbol{\xi},& \delta\bu \rangle_{P(\bu)} = \frac{d}{dt} \llangle \boldsymbol{\xi}, P(\bu) \delta\bu \rrangle \\
            &= \llangle \frac{d\boldsymbol{\xi}}{dt} , P(\bu) \delta\bu\rrangle + \llangle \boldsymbol{\xi}, \frac{dP(\bu)}{dt} \delta\bu\rrangle + \llangle \boldsymbol{\xi}, P(\bu) \frac{d\delta\bu}{dt} \rrangle \\
            &= - \llangle \boldsymbol{\xi}, P(\bu) D\boldf(t,\bu) \delta \bu \rrangle - \llangle \boldsymbol{\xi}, [DP(\bu) \boldf(t,\bu)] \delta \bu \rrangle + \llangle \boldsymbol{\xi}, [DP(\bu) \boldf(t,\bu)] \delta \bu \rrangle \\
            & \qquad + \llangle \boldsymbol{\xi}, P(\bu) D\boldf(t,\bu)\delta\bu \rrangle \\
            &= 0,
        \end{align*}
        where we used that $\frac{dP(\bu)}{dt} = DP(\bu) \dot{\bu} = DP(\bu) \boldf(t,\bu)$.
    \end{proof}
            \begin{remark}
            Alternatively, using a similar argument as done in \cite{TrLe2024}, this proposition also follows from the (extended) symplecticity of the Hamiltonian flow.
        \end{remark}
\end{prop}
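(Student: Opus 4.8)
The plan is to prove the conservation law by direct differentiation, in the same spirit as the quadratic invariant of \Cref{prop:ManifoldQuadraticInvariant}, but now carefully tracking the additional contribution coming from the state-dependence of $P$. First I would rewrite the fiberwise pairing in terms of the standard pairing using \Cref{def:fiber-duality}, namely $\langle \boldsymbol{\xi}(t), \delta\bu(t)\rangle_{P(\bu(t))} = \llangle \boldsymbol{\xi}(t), P(\bu(t))\delta\bu(t)\rrangle$, and then apply the product rule. Since $\boldsymbol{\xi}$, the operator $P(\bu(t))$, and $\delta\bu$ all depend on $t$, this produces exactly three terms: one involving $\dot{\boldsymbol{\xi}}$, one involving $\tfrac{d}{dt}P(\bu(t))$, and one involving $\tfrac{d}{dt}\delta\bu$.

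The key observation for the middle term is that, by the chain rule and the state dynamics \eqref{eq:fiberwise-prec-adjoint-system-a}, $\tfrac{d}{dt}P(\bu(t)) = DP(\bu)\dot{\bu} = DP(\bu)\boldf(t,\bu)$, so this term equals $\llangle \boldsymbol{\xi}, [DP(\bu)\boldf(t,\bu)]\delta\bu\rrangle$. Next I would substitute the fiberwise-preconditioned adjoint equation \eqref{eq:fiberwise-prec-adjoint-system-b} into the first term and move the operators across the standard pairing using the defining adjoint relation $\llangle A^* \bq, \bv\rrangle = \llangle \bq, A\bv\rrangle$; the factors $P(\bu)^{-*}$ and $P(\bu)^*$ telescope against the $P(\bu)$ appearing inside the pairing, leaving $-\llangle \boldsymbol{\xi}, P(\bu) D\boldf(t,\bu)\delta\bu\rrangle - \llangle \boldsymbol{\xi}, [DP(\bu)\boldf(t,\bu)]\delta\bu\rrangle$. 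Finally, substituting the variational equation from \eqref{eq:fiberwise-adjoint-variational} into the third term and moving $P(\bu)$ across gives $+\llangle \boldsymbol{\xi}, P(\bu)D\boldf(t,\bu)\delta\bu\rrangle$.

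Summing the three contributions, the two copies of $\llangle \boldsymbol{\xi}, P(\bu)D\boldf(t,\bu)\delta\bu\rrangle$ (from the first and third terms, with opposite signs) cancel, and the two copies of $\llangle \boldsymbol{\xi}, [DP(\bu)\boldf(t,\bu)]\delta\bu\rrangle$ (one from the adjoint equation, one from differentiating $P(\bu)$) cancel as well, yielding zero. The step I expect to require the most care is precisely this second cancellation: it is the entire reason for including the connection-like term $-P(\bu)^{-*}[DP(\bu)\boldf(t,\bu)]^*\boldsymbol{\xi}$ in \eqref{eq:fiberwise-prec-adjoint-system-b}, and I would want to check that the adjoint $[DP(\bu)\boldf(t,\bu)]^*$ of the operator $DP(\bu)\boldf(t,\bu) \in L(U,U)$ is taken with respect to the standard pairing, so that after transporting $P(\bu)^{-*}$ it pairs correctly against the term produced by $\tfrac{d}{dt}P(\bu(t))$. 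As an alternative to this direct computation, one could argue abstractly, as in \cite{TrLe2024}, that the flow of the fiberwise-preconditioned adjoint system is (extended) symplectic for $\Omega_P$ and that the pairing is the associated conserved quantity; however, the direct calculation above is the most transparent route and makes the role of the extra term explicit.
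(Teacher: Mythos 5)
Your proposal is correct and follows essentially the same route as the paper's proof: differentiate $\llangle \boldsymbol{\xi}, P(\bu)\delta\bu\rrangle$ by the product rule, use $\tfrac{d}{dt}P(\bu) = DP(\bu)\boldf(t,\bu)$, substitute the adjoint and variational equations, and observe that the connection-like term in \eqref{eq:fiberwise-prec-adjoint-system-b} exactly cancels the term produced by differentiating $P(\bu)$. The alternative symplecticity argument you mention is likewise noted in the paper's accompanying remark.
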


We note that the fiberwise-preconditioned adjoint equation \eqref{eq:fiberwise-prec-adjoint-system-b} has a nice interpretation in terms of a connection \cite{KoNo1996} on the vector bundle $T^*U \rightarrow U$. Note that the new term in \eqref{eq:fiberwise-prec-adjoint-system-b} has the coordinate expression 
$$ -\Big(P(\bu)^{-*} [DP(\bu)\boldf(t,\bu)]^* \boldsymbol{\xi} \Big)_\nu = - (P(\bu)^{-1})^\alpha_\nu D_\gamma P^\beta_\alpha(\bu) f^\gamma(t,\bu) \xi_\beta.$$
Define the \emph{Christoffel symbols} (e.g., see section III.7 of \cite{KoNo1996}) corresponding to the fiberwise duality pairing by
$$ \Gamma^\beta_{\nu \gamma}(\bu) := (P(\bu)^{-1})^\alpha_\nu D_\gamma P^\beta_\alpha(\bu). $$
Then, define the total time derivative of a section $\mathbf{s} = (s_\beta)$ of $T^*U$ over a curve $\bu = (u^\alpha)$ of $U$,
\begin{equation}\label{eq:total-time-derivative}
    \left(\frac{D}{Dt} \mathbf{s}\right)_\nu = \frac{ds_\nu}{dt} + \Gamma^\beta_{\nu \gamma}(\bu) \dot{u}^\gamma s_\beta.
\end{equation}
It is straightforward to verify that this corresponds to a vector bundle connection on $T^*U \rightarrow U$, with $d/dt = \dot{u}^\alpha \partial/\partial u^\alpha$. Using $\boldf = \dot{\bu}$ in \eqref{eq:fiberwise-prec-adjoint-system-b}, the fiberwise-preconditioned adjoint equation can be equivalently expressed
\begin{equation}\label{eq:fiberwise-adjoint-eq-connection}
    \frac{D}{Dt} \boldsymbol{\xi} = - P(\bu)^{-*} [D \boldf(t,\bu)]^* P(\bu)^* \boldsymbol{\xi}.
\end{equation}
This gives a nice interpretation of the fiberwise-preconditioned adjoint equation; it has the same form as the preconditioned adjoint equation  \eqref{eq:fiberwise-prec-adjoint-system-b} using a global transformation, but with $d/dt$ replaced with the connection $D/Dt$, which accounts for the duality pairing varying on $T^*U \oplus_U TU$. We can make the expression \eqref{eq:fiberwise-adjoint-eq-connection} further invariant as follows. Recalling that the adjoint of an operator with respect to the pairing $\langle\cdot,\cdot\rangle_{P(\bu)}$, denoted $[\ \cdot\ ]^{*P(\bu)}$, is related to the adjoint with respect to the standard pairing by conjugation, e.g., as appears in the right-hand side of \eqref{eq:fiberwise-adjoint-eq-connection}, this equation can be expressed
\begin{equation}\label{eq:fiberwise-adjoint-eq-connection-invariant}
    \frac{D}{Dt} \boldsymbol{\xi} = -  [D \boldf(t,\bu)]^{*P(\bu)}  \boldsymbol{\xi}.
\end{equation}
This equation has the same form as the canonical adjoint equation \eqref{eq:vector-space-adjoint-b}, with $d/dt$ replaced by $D/Dt$ and the standard adjoint replaced with the adjoint with respect to the fiberwise duality pairing. We can interpret the canonical adjoint equation \eqref{eq:vector-space-adjoint-b} as a special case of \eqref{eq:fiberwise-adjoint-eq-connection-invariant}, noting that the Christoffel symbols associated with the standard duality pairing vanish (i.e., the connection is flat), so that $D/Dt = d/dt$ for the standard duality pairing.

Interestingly, this vector bundle connection has no relation to a Riemannian connection (in this section, we never assumed that $TU$ has a Riemannian structure); it arose by specifying a fiberwise, smoothly varying non-degenerate pairing $\langle \cdot,\cdot\rangle_{P(\bu)}: T_\bu^*U \times T_\bu U \rightarrow \mathbb{R}$, whereas a Riemannian structure is given by a fiberwise, smoothly varying inner product on the tangent spaces $g_\bu: T_\bu U \times T_\bu U \rightarrow \mathbb{R}$. It would be interesting to explore further the geometry of connections associated with fiberwise duality pairings; for example, to formulate such connections on general curved manifolds and explore related concepts such as curvature and torsion. In particular, the invariant expression \eqref{eq:fiberwise-adjoint-eq-connection-invariant} suggests that this preconditioning transformation can be similarly formulated for adjoint systems with state dynamics evolving on a manifold.

\section{Adjoint systems for coupled evolution equations}\label{sec:adjoint-systems-coupled}
To apply the preconditioned adjoint formulations, we turn our attention to adjoint systems for coupled evolution equations. We will begin by discussing duality on a Cartesian product of vector spaces in \Cref{sec:duality-cartesian}. Then, in \Cref{sec:adjoint-coupled-evolution}, we introduce coupled evolution equations on a Cartesian product and discuss the adjoint equation corresponding to coupled evolution equations.

We will then discuss applications of the adjoint preconditioning developed in \Cref{sec:adjoint-preconditioning-general} in the context of coupled evolution equations. Particularly, as an application of adjoint preconditioning induced by a duality pairing, we discuss scale preconditioning in \Cref{sec:scale-preconditoning-coupled}. We discuss some practical considerations for time integration of the adjoint system in \Cref{sec:time-integration}. We then utilize this scale preconditioner and the adjoint of evolution equations involving mass matrices (see \Cref{ex:mass-matrices}) in the numerical example, \Cref{sec:numerical}.

\subsection{Duality for a Cartesian product}\label{sec:duality-cartesian}
 For the remainder, we will be interested in state dynamics given by coupled evolution equations and their adjoints. As such, it is useful to specialize the discussion of \Cref{sec:duality} to the case when $U$ decomposes as a Cartesian product of vector spaces $U = X \times Y$. Let $((\cdot,\cdot))^X$ and $((\cdot,\cdot))^Y$ be inner products on $X$ and $Y$, respectively. This defines an inner product on $U$ by: for any $\bu_1 = (\bx_1, \by_1) \in U$ and $\bu_2 = (\bx_2, \by_2) \in U$,
$$ ((\bu_1,\bu_2)) := ((\bx_1,\bx_2))^X + ((\by_1,\by_2))^Y. $$
Similarly, we denote the standard duality pairings as $\llangle\cdot,\cdot\rrangle^X: X^* \times X \rightarrow \mathbb{R}$ and $\llangle\cdot,\cdot\rrangle^Y: Y^* \times Y \rightarrow \mathbb{R}$. Viewing $U^* = X^* \times Y^*$, a linear functional $\bp \in U^*$ can be decomposed $\bp = (\bp_\bx, \bp_\by)$, such that its action on $ \bu = (\bx,\by) \in U$ decomposes as
$$ \llangle \bp,\bu\rrangle = \bp(\bu) = \bp_\bx(\bx) + \bp_\by(\by) = \llangle \bp_\bx, \bx\rrangle^X + \llangle \bp_\by, \by\rrangle^Y.$$

Note that the derivative of a map $C: U = X \times Y \rightarrow \mathbb{R}$ can be decomposed into its components,
$$ U^* \ni DC(\bu) = \begin{pmatrix} D_\bx C(\bx,\by) \\ D_\by C(\bx,\by) \end{pmatrix}, $$
where $\bu = (\bx,\by)$, $D_\bx C(\bx,\by) \in X^*$ and $D_\by C(\bx,\by) \in Y^*$. Throughout, we will also express vectors in a Cartesian product vertically, e.g., for $\bu = (\bx,\by) \in X \times Y$ and $\bp = (\bp_\bx, \bp_\by) \in X^* \times Y^*$, we express
$$ \bu = \begin{pmatrix} \bx \\ \by \end{pmatrix} \text{ and } \bp = \begin{pmatrix} \bp_\bx \\ \bp_\by \end{pmatrix}.$$

Similarly, the derivative of a map $\boldf: U = X \times Y \rightarrow U = X \times Y$ with components $\boldf: (\bx,\by) \mapsto (\boldf_\bx(\bx,\by), \boldf_\by(\bx,\by)) \in X \times Y$, can be decomposed in \emph{block form} as
\begin{equation}\label{eq:Df}
    L(U,U) \ni D\boldf(\bu) = \begin{pmatrix} D_\bx \boldf_\bx (\bx,\by) & D_\by \boldf_\bx(\bx,\by)  \\ D_\bx \boldf_\by(\bx,\by)  & D_\by \boldf_\by(\bx,\by) \end{pmatrix},
\end{equation}
where the blocks are mappings
\begin{align*}
    D_\bx \boldf_\bx(\bx,\by) \in L(X,X),  \quad
    D_\by \boldf_\bx(\bx,\by) \in L(Y,X),  \quad
    D_\bx \boldf_\by(\bx,\by) \in L(X,Y),  \quad
    D_\by \boldf_\by(\bx,\by) \in L(Y,Y).
\end{align*}
The adjoint of this operator with respect to the standard pairing can then be expressed in block form as
\begin{equation}\label{eq:Df-adjoint}
    L(U^*,U^*) \ni [D\boldf(\bu)]^* = \begin{pmatrix} [D_\bx \boldf_\bx (\bx,\by)]^* & [D_\bx \boldf_\by(\bx,\by)]^*  \\ [D_\by \boldf_\bx(\bx,\by)]^*  & [D_\by \boldf_\by(\bx,\by)]^* \end{pmatrix},
\end{equation}
where the blocks are defined by
\begin{align*}
    L(X^*,X^*) \ni [D_\bx \boldf_\bx(\bx,\by)]^* &\text{ such that } \llangle\, [D_\bx \boldf_\bx(\bx,\by)]^* \bp_\bx, \bv_\bx \rrangle^X = \llangle \bp_\bx, D_\bx \boldf_\bx(\bx,\by) \bv_\bx\rrangle^X,  \\
    L(Y^*,X^*) \ni [D_\bx \boldf_\by(\bx,\by)]^* &\text{ such that } \llangle\, [D_\bx \boldf_\by(\bx,\by)]^* \bp_\by, \bv_\bx \rrangle^X = \llangle \bp_\by, D_\bx \boldf_\by(\bx,\by) \bv_\bx\rrangle^Y,  \\
    L(X^*,Y^*) \ni [D_\by \boldf_\bx(\bx,\by)]^* &\text{ such that } \llangle\, [D_\by \boldf_\bx(\bx,\by)]^* \bp_\bx, \bv_\by \rrangle^Y = \llangle \bp_\bx, D_\by \boldf_\bx(\bx,\by) \bv_\by\rrangle^X,  \\
    L(Y^*,Y^*) \ni [D_\by \boldf_\by(\bx,\by)]^* &\text{ such that } \llangle\, [D_\by \boldf_\by(\bx,\by)]^* \bp_\by, \bv_\by \rrangle^Y = \llangle \bp_\by, D_\by \boldf_\by(\bx,\by) \bv_\by\rrangle^Y,
\end{align*}
for all $\bv_\bx \in X$, $\bv_\by \in Y$, $\bp_\bx \in X^*$, $\bp_\by \in Y^*$.

\subsection{Adjoint of coupled evolution equations}\label{sec:adjoint-coupled-evolution}
We now discuss coupled evolution equations and their adjoints. For concreteness, we consider systems of two coupled evolution equations. However, in principle, the discussion of this section follows similarly for any number of coupled evolution equations with the obvious modifications.

As before, let $U$ be a finite-dimensional vector space that decomposes as the product of vector spaces $U = X \times Y$. We consider state dynamics given by \emph{coupled evolution equations} on $X \times Y \ni (\bx,\by)$, on a time interval $[0,T]$,
\begin{subequations}\label{eq:coupled-evolution-equation}
    \begin{align}
        \frac{d}{dt} \begin{pmatrix} \bx \\ \by \end{pmatrix} &= \boldf(t,\bx,\by), \label{eq:coupled-evolution-equation-a}\\
        \bx(0) &= \bx_0, \\
        \by(0) &= \by_0,
    \end{align}
\end{subequations}
where $\boldf$ is a time-dependent vector field on $X \times Y$, i.e., $\boldf: [0,T] \times X \times Y \rightarrow X \times Y$. We denote the components of $\boldf$ valued in $X$ and $Y$ as $\boldf_\bx$ and $\boldf_\by$, respectively. The evolution equation \eqref{eq:coupled-evolution-equation-a} is then expressed as
\begin{align*}
    \frac{d}{dt} \begin{pmatrix} \bx \\ \by \end{pmatrix} &= \begin{pmatrix} \boldf_\bx(t,\bx,\by) \\ \boldf_\by(t,\bx,\by) \end{pmatrix}.
\end{align*}

The adjoint equation on $X^* \times Y^* \ni (\bp_\bx,\bp_\by)$ associated to the coupled evolution equation \eqref{eq:coupled-evolution-equation} can be expressed in block form as
\begin{equation}\label{eq:adjoint-coupled-evolution-equation}
   \frac{d}{dt} \begin{pmatrix} \bp_\bx \\ \bp_\by \end{pmatrix} = - \begin{pmatrix} [D_\bx \boldf_\bx (t,\bx,\by)]^* & [D_\bx \boldf_\by(t,\bx,\by)]^*  \\ [D_\by \boldf_\bx(t,\bx,\by)]^*  & [D_\by \boldf_\by(t,\bx,\by)]^* \end{pmatrix} \begin{pmatrix} \bp_\bx \\ \bp_\by \end{pmatrix}.
\end{equation}
Note that the state dynamics \eqref{eq:coupled-evolution-equation-a}, together with the adjoint equation \eqref{eq:adjoint-coupled-evolution-equation}, can be regarded as a (time-dependent) Hamiltonian system on $T^*U = T^*(X \times Y) = T^*X \times T^*Y = (X \times X^*) \times (Y \times Y^*)$, with Hamiltonian given by
$$ H(t,\bx,\by,\bp_\bx,\bp_\by) = \llangle \bp_\bx, \boldf_\bx(t,\bx,\by)\rrangle^X + \llangle \bp_\by, \boldf_\by(t,\bx,\by)\rrangle^Y. $$
and canonical symplectic form $\Omega = -\llangle d\bp_\bx \wedge d\bx \rrangle^X - \llangle d\bp_\by \wedge d\by \rrangle^Y$. Such adjoint systems associated to a coupled evolution equation admit the following quadratic conservation law. Let $(\delta \bx,\delta \by)$ satisfy the variational equation given by the linearization of the state dynamics \eqref{eq:coupled-evolution-equation},
\begin{equation}\label{eq:variational-coupled-evolution-equation}
    \frac{d}{dt} \begin{pmatrix} \delta\bx \\ \delta\by \end{pmatrix} = \begin{pmatrix} D_\bx \boldf_\bx (t,\bx,\by) & D_\by \boldf_\bx(t,\bx,\by)  \\ D_\bx \boldf_\by(t,\bx,\by)  & D_\by \boldf_\by(t,\bx,\by) \end{pmatrix}\begin{pmatrix} \delta\bx \\ \delta\by \end{pmatrix}.
\end{equation}
Then, we have following adjoint-variational conservation law for coupled evolution equations.
\begin{prop}\label{prop:adjoint-variational-cons-coupled}
    Let $(\bx,\by,\bp_\bx,\bp_\by,\delta \bx,\delta\by)$ satisfy the state dynamics \eqref{eq:coupled-evolution-equation}, adjoint equation \eqref{eq:adjoint-coupled-evolution-equation}, and variational equation \eqref{eq:variational-coupled-evolution-equation}. Then,
    \begin{equation}\label{eq:adjoint-variational-cons-coupled}
        \frac{d}{dt} \Big(\llangle \bp_\bx, \delta \bx\rrangle^X + \llangle \bp_\by, \delta \by\rrangle^Y\Big) = 0.
    \end{equation}
    \begin{proof}
        This follows from the general case, \Cref{prop:ManifoldQuadraticInvariant}, and using the expressions for the decomposition $U = X \times Y$ discussed in \Cref{sec:duality-cartesian}.
    \end{proof}
\end{prop}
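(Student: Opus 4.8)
The plan is to recognize the coupled system as a special instance of the abstract adjoint system on the product vector space $U = X \times Y$, and then invoke the general quadratic invariant \Cref{prop:ManifoldQuadraticInvariant}. The key point is that every object in the coupled setting is the block decomposition of the corresponding abstract object under the identifications of \Cref{sec:duality-cartesian}: the standard pairing of $U^*$ with $U$ splits as $\llangle\bp,\delta\bu\rrangle = \llangle\bp_\bx,\delta\bx\rrangle^X + \llangle\bp_\by,\delta\by\rrangle^Y$, the derivative $D\boldf$ is the block matrix \eqref{eq:Df}, its adjoint is \eqref{eq:Df-adjoint}, the coupled adjoint equation \eqref{eq:adjoint-coupled-evolution-equation} is exactly $\dot\bp = -[D\boldf]^*\bp$ written in blocks, and the variational equation \eqref{eq:variational-coupled-evolution-equation} is $\dot{\delta\bu} = D\boldf\,\delta\bu$ in blocks. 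Once these identifications are in place, the conservation law \eqref{eq:adjoint-variational-cons-coupled} is literally \eqref{ManifoldVariationalQuadratic} read off in the product coordinates, so the result follows immediately.

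For a self-contained verification I would instead differentiate the paired quantity directly. Applying the product rule gives four terms, $\llangle\dot\bp_\bx,\delta\bx\rrangle^X + \llangle\bp_\bx,\dot{\delta\bx}\rrangle^X + \llangle\dot\bp_\by,\delta\by\rrangle^Y + \llangle\bp_\by,\dot{\delta\by}\rrangle^Y$. I would then substitute the block adjoint equation \eqref{eq:adjoint-coupled-evolution-equation} for $\dot\bp_\bx,\dot\bp_\by$ and the block variational equation \eqref{eq:variational-coupled-evolution-equation} for $\dot{\delta\bx},\dot{\delta\by}$, and move each transposed operator across the pairing using its defining adjoint relation from \Cref{sec:duality-cartesian}. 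This produces eight scalar terms of the form $\pm\llangle\bp_\bullet, D_\bullet\boldf_\bullet\,\delta\bullet\rrangle$, which I expect to cancel in pairs.

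The main subtlety, and the only place the coupling genuinely matters, is the cancellation of the off-diagonal terms. The two diagonal contributions ($\bx$ paired with $\bx$, and $\by$ with $\by$) cancel exactly as in the scalar invariant of \Cref{prop:ManifoldQuadraticInvariant}. The cross terms require care with the mixed-block adjoints: the term $\llangle\bp_\by, D_\bx\boldf_\by\,\delta\bx\rrangle^Y$ arising from $\dot\bp_\bx$ through $[D_\bx\boldf_\by]^*$ must cancel against the identical term produced by $\dot{\delta\by}$, and likewise $\llangle\bp_\bx, D_\by\boldf_\bx\,\delta\by\rrangle^X$ arising from $\dot{\delta\bx}$ cancels the term produced by $\dot\bp_\by$ through $[D_\by\boldf_\bx]^*$. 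Getting this bookkeeping right, in particular tracking which space each pairing lives in ($X$ versus $Y$) so that the correct mixed adjoint identity is applied, is the crux of the calculation; once the off-diagonal blocks are matched, every term vanishes and the total time derivative is zero.
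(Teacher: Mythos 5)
Your proposal is correct and its primary argument---identifying the coupled system as the block decomposition of the abstract adjoint system on $U = X\times Y$ and invoking \Cref{prop:ManifoldQuadraticInvariant} via the identifications of \Cref{sec:duality-cartesian}---is exactly the paper's proof. Your supplementary direct computation is also sound: the eight terms cancel precisely as you describe, with the mixed-block adjoint identities (e.g.\ $\llangle [D_\bx\boldf_\by]^*\bp_\by,\delta\bx\rrangle^X = \llangle \bp_\by, D_\bx\boldf_\by\,\delta\bx\rrangle^Y$) handling the off-diagonal pairings correctly.
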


Since such a quadratic conservation law is satisfied, \eqref{eq:adjoint-variational-cons-coupled} implies that $\bp_\bx$ backpropagates derivatives of a cost function with respect to $\bx$, and similarly, $\bp_\by$ backpropagates derivatives of a cost function with respect to $\by$. 

\subsection{Scale preconditioner for coupled evolution equations}\label{sec:scale-preconditoning-coupled}
As discussed in \Cref{sec:induced-from-duality}, one can use a base space-preserving adjoint preconditioning transformation to reshape the derivative obtained through adjoint backpropagation. As an example of such reshaping, we consider adjoint preconditioning when the state dynamics admit large scale-separation, i.e., the state vectors $\bx$ and $\by$ have differing orders of magnitude. As a simple example, it is instructive to consider a block two-by-two system whose blocks are multiples of the identity. 

\begin{example}[Scale preconditioning for the adjoint of a block two-by-two system]\label{ex:scale-preconditioner-block}
    We consider a block two-by-two system of equations such that the first component $\vec{x}$ scales like $|\alpha| \gg 1$ and the second component $\vec{y}$ scales like $\mathcal{O}(1)$. Particularly, consider the state dynamics given by the ODE
    \begin{equation}\label{eq:two-scale-DE}
        \frac{d}{dt} \begin{pmatrix} \vec{x} \\ \vec{y} \end{pmatrix} = \begin{pmatrix} I & \alpha I \\ \beta I & I \end{pmatrix} \begin{pmatrix} \vec{x} \\ \vec{y} \end{pmatrix},
    \end{equation}
    where $\vec{x}(t), \vec{y}(t) \in \mathbb{R}^N$, $I$ denotes the $N \times N$ identity, and $\alpha,\beta \in \mathbb{R}$ are constants such that $|\alpha| \gg 1$ and $0<|\beta| \sim \mathcal{O}(1/|\alpha|)$. The corresponding adjoint equation is given by
    \begin{equation}\label{eq:two-scale-DE-adjoint}
        \frac{d}{dt} \begin{pmatrix} \vec{p}_x \\ \vec{p}_y \end{pmatrix} = -\begin{pmatrix} I & \beta I \\ \alpha I & I \end{pmatrix} \begin{pmatrix} \vec{p}_x \\ \vec{p}_y \end{pmatrix}.
    \end{equation}
    Consider initial conditions $(\vec{x}(0),\vec{y}(0))$ such that $\|\vec{x}(0)\| \sim \mathcal{O}(\alpha) \gg 1$ and $\|\vec{y}(0)\| \sim \mathcal{O}(1)$. Under these assumptions, both terms in the $\vec{x}$-component of the right-hand side of the ODE, $\vec{x} + \alpha \vec{y}$, scale like $\mathcal{O}(\alpha)$; similarly, both terms in the $\vec{y}$-component, $\beta \vec{x} + \vec{y}$, scale like $\mathcal{O}(1)$. 

    Let us further suppose that the adjoint equation is supplied with terminal conditions which scale analogously to their corresponding state variables, i.e., $\vec{p}_x(T) \sim \mathcal{O}(\alpha)$ and $\vec{p}_y(T) \sim \mathcal{O}(1)$. Then, the adjoint equation \eqref{eq:two-scale-DE-adjoint} does not retain the scaling of the state dynamics \eqref{eq:two-scale-DE}. Namely, in the $\vec{p}_x$-component of the adjoint equation, $-\vec{p}_x- \beta \vec{p}_y$, the first term scales like $\mathcal{O}(\alpha)$, whereas the second term scales like $\mathcal{O}(1)$. In the $\vec{p}_y$-component of the adjoint equation, $-\alpha \vec{p}_x- \vec{p}_y$, the first term scales like $\mathcal{O}(\alpha^2)$ and the second term scales like $\mathcal{O}(1)$. This arises from the fact that the block operator appearing in the adjoint equation is (minus) the transpose of the block operator of the state dynamics; thus, the off-diagonal blocks get swapped. 

    For an adjoint-based optimization method, this leads to two distinct issues. A minor issue is that the $\vec{p}_y$-component of the adjoint equation scales as $\mathcal{O}(\alpha^2)$, which is much larger than the scaling of $\vec{y} \sim \mathcal{O}(1)$. This would lead to unstable gradient descent with an $\mathcal{O}(1)$ step-size but is simply resolved by taking an $\mathcal{O}(1/\alpha^2)$ step-size in the $\vec{y}$ direction. The more critical issue is that, in each component of the adjoint equation, the two terms have different scalings. Namely, the $\mathcal{O}(\alpha)$ term in the $\vec{p}_x$-component dominates the behavior, $\|\vec{p}_x\| \gg \|\beta\vec{p}_y\|$, and similarly, the $\mathcal{O}(\alpha^2)$ term in the $\vec{p}_y$-component dominates the behavior, $\|\alpha\vec{p}_x\| \gg \|\vec{p}_y\|$. This means that $\vec{p}_x$ dominates the behavior of the adjoint equation and the dependence on $\vec{p}_y$ decouples as $\alpha \rightarrow \infty$.

    To resolve this improper scaling, we instead use a preconditioned adjoint system induced from a duality pairing as described in \Cref{sec:induced-from-duality},
        \begin{equation}\label{eq:two-scale-DE-similarity}
        \frac{d}{dt} \begin{pmatrix} \vec{\xi}_x \\ \vec{\xi}_y \end{pmatrix} = -S^{-T} \begin{pmatrix} I & \beta I \\ \alpha I & I \end{pmatrix} S^T \begin{pmatrix} \vec{\xi}_x\\ \vec{\xi}_y \end{pmatrix}.
    \end{equation}
    In particular, choosing 
    $$ S = \begin{pmatrix}
        I & 0 \\ 0 & \frac{\alpha}{\beta}
    \end{pmatrix},$$
    equation \eqref{eq:two-scale-DE-similarity} becomes
    \begin{equation}
        \frac{d}{dt} \begin{pmatrix} \vec{\xi}_x\\ \vec{\xi}_y \end{pmatrix} = -\begin{pmatrix} I & \alpha I \\ \beta I & I \end{pmatrix} \begin{pmatrix} \vec{\xi}_x\\ \vec{\xi}_{y}\end{pmatrix}.
    \end{equation}
    The transformed adjoint system has the same scaling properties as the state dynamics. We refer to $S$ as a scale preconditioned duality pairing and the value $\alpha/\beta$ as the scale preconditioner since it rescales the adjoint equation to have the same scaling of the state dynamics.\qed
\end{example}

We will generalize the previous example to consider state dynamics for $(\vec{x}(t),\vec{y}(t)) \in \mathbb{R}^N \times \mathbb{R}^N$ given by a linear system, 
$$ \frac{d}{dt} \begin{pmatrix} \vec{x} \\ \vec{y} \end{pmatrix} = \underbrace{\begin{pmatrix} A & B \\ C & D \end{pmatrix}}_{=:\ L} \begin{pmatrix} \vec{x} \\ \vec{y} \end{pmatrix}, $$
where the blocks $A$, $B$, $C$, and $D$ are symmetric. We will proceed informally in this discussion, but roughly, when we say an operator $M \sim \mathcal{O}(\nu)$, we mean that it maps a given vector $\vec{v} \sim \mathcal{O}(\mu)$ to a vector $M \vec{v} \sim \mathcal{O}(\mu\nu)$. Then, we further assume the blocks of $L$ scale as $A \sim \mathcal{O}(1)$, $B \sim \mathcal{O}(\beta)$, $C \sim \mathcal{O}(\alpha)$, and $D \sim \mathcal{O}(1)$ where, as in \Cref{ex:scale-preconditioner-block}, $\alpha \gg 1$ and $\beta \ll 1$. This can be visualized schematically as follows,
\begin{equation}\label{eq:forward-map-schematic}
    L = \left( \begin{tikzcd}[sep=small]
	&&&& {\mathcal{O}(1)} \\
	{\mathcal{O}(1)} \\
	&&&& {\mathcal{O}(\beta)} \\
	{\mathcal{O}(\alpha)} \\
	&&&& {\mathcal{O}(1)} \\
	{\mathcal{O}(1)}
	\arrow["A", from=2-1, to=2-1, loop, in=55, out=125, distance=10mm]
	\arrow["B", from=1-5, to=3-5]
	\arrow["C"', from=6-1, to=4-1]
	\arrow["D"', from=5-5, to=5-5, loop, in=305, out=235, distance=10mm]
\end{tikzcd} \right),
\end{equation}
where we visualize a downward arrow as an operator lowering the magnitude and an upward arrow as an operator increasing the magnitude. Now, for the operator $L^*$ appearing in the adjoint equation, we would like it to act in the same way as $L$, as discussed in \Cref{ex:scale-preconditioner-block}. This is not the case, even with the assumption that the blocks are each symmetric, since the off-diagonal blocks are swapped under the adjoint operation, i.e.,
\begin{equation}\label{eq:backward-map-schematic}
    L^* = \left( \begin{tikzcd}[sep=small]
	&&&& {\mathcal{O}(\alpha)} \\
	{\mathcal{O}(1)} \\
	&&&& {\mathcal{O}(1)} \\
	{\mathcal{O}(1)} \\
	&&&& {\mathcal{O}(1)} \\
	{\mathcal{O}(\beta)}
	\arrow["A", from=2-1, to=2-1, loop, in=55, out=125, distance=10mm]
	\arrow["B", from=4-1, to=6-1]
	\arrow["C"', from=3-5, to=1-5]
	\arrow["D"', from=5-5, to=5-5, loop, in=305, out=235, distance=10mm]
\end{tikzcd} \right).
\end{equation}
Instead of considering the operator $L^*$ corresponding to the standard duality pairing, we can consider the operator $P^{-*} L^* P^*$ from the preconditioned adjoint equation with respect to a duality pairing, as discussed in \Cref{sec:induced-from-duality}. Specifically, $P$ is an invertible linear transformation on $\mathbb{R}^N \times \mathbb{R}^N$. We have the freedom to choose $P$ so long as it is invertible. In analogy with \Cref{ex:scale-preconditioner-block}, we consider a block diagonal form, 
$$ P = \begin{pmatrix} P_1 & 0 \\ 0 & P_2 \end{pmatrix}, $$
where $P_1$ and $P_2$ are symmetric invertible linear transformations on $\mathbb{R}^N$. The preconditioned adjoint operator is given by
$$ P^{-1}L^*P = \begin{pmatrix} P_1^{-1}AP_1 & -P_1^{-1}CP_2 \\ -P_2^{-1}BP_1 & P_2^{-1}DP_2 \end{pmatrix}. $$
Now, we choose the scaling of $P_1$ and $P_2$ so that $P^{-1}L^*P$ acts in the same way as $L$. A straight-forward calculation shows that if we choose scalings $P_1 \sim \mathcal{O}(\sigma)$ and $P_2 \sim \mathcal{O}(\nu)$, where $\nu/\sigma = \alpha/\beta$, with the assumption that $P_1^{-1} \sim \mathcal{O}(\sigma^{-1})$ and $P_2^{-1} \sim \mathcal{O}(\nu^{-1})$, then the preconditioned adjoint operator $P^{-1}L^*P$ acts in a similar manner to $L$ \eqref{eq:forward-map-schematic}, i.e.,
\begin{equation} \label{eq:transformed-adjoint-map-schematic}
    P^{-1}L^*P = \left( \begin{tikzcd}[sep=small]
	&&&& {\mathcal{O}(1)} \\
	{\mathcal{O}(1)} \\
	&&&& {\mathcal{O}(\beta)} \\
	{\mathcal{O}(\alpha)} \\
	&&&& {\mathcal{O}(1)} \\
	{\mathcal{O}(1)}
	\arrow["P_1^{-1}AP_1", from=2-1, to=2-1, loop, in=55, out=125, distance=10mm]
	\arrow["P_1^{-1}CP_2", from=1-5, to=3-5]
	\arrow["P_2^{-1}BP_1"', from=6-1, to=4-1]
	\arrow["P_2^{-1}DP_2"', from=5-5, to=5-5, loop, in=305, out=235, distance=10mm]
\end{tikzcd} \right) 
\end{equation}

\begin{remark}
    Of course, the simplest choice of $P_1$ and $P_2$ are multiples of the identity, as discussed in \Cref{ex:scale-preconditioner-block}. We will use this choice in the numerical example, \Cref{sec:numerical}. However, the theory developed in \Cref{sec:induced-from-duality} and \Cref{sec:fiberwise-duality} is more general, including the possibility of preconditioners that are not constant multiples of the identity (e.g., to treat problems with multiple spatial scales) or preconditioners that are state-dependent (for example, to develop a robust, state-adapted adjoint backpropagation). We plan to explore these possibilities in future work.
\end{remark}

\subsection{Time integration}\label{sec:time-integration} As a practical matter, we provide some details regarding the time integration of the state dynamics \eqref{eq:coupled-evolution-equation} and its adjoint \eqref{eq:adjoint-coupled-evolution-equation}. For the state dynamics, we consider a semi-implicit time integration scheme, where $\boldf(t,\bx,\by)$ may generally depend nonlinearly on $\bx$ and $\by$. We decompose $\boldf: [0,T] \times X \times Y \rightarrow X \times Y$ using a \emph{partitioned vector field} \cite{nprk1, nprk2}, which is a mapping 
$$\boldF: \Big([0,T] \times X \times Y\Big) \times \Big([0,T] \times X \times Y\Big) \rightarrow X \times Y$$
that agrees with $\boldf$ along the diagonal, i.e.,
\begin{equation}\label{eq:partitioned-vector-field}
    \boldF(t, \bx, \by; t, \bx, \by) = \boldf(t, \bx,\by).
\end{equation}
Given a partitioned vector field $\boldF$ of $\boldf$, the semi-implicit Euler scheme for the state dynamics \eqref{eq:coupled-evolution-equation} with timestep $\Delta t$ is given by
\begin{align}\label{eq:coupled-semi-imp-euler}
    \begin{pmatrix} \bx_{n+1} \\ \by_{n+1} \end{pmatrix}  &= \begin{pmatrix} \bx_n \\ \by_n \end{pmatrix} + \Delta t\, \boldF(t_n, \bx_n, \by_n; t_{n+1}, \bx_{n+1}, \by_{n+1}) \\
    &= \begin{pmatrix} \bx_n \\ \by_n \end{pmatrix} + \Delta t \begin{pmatrix} \boldF_{\bx}(t_n, \bx_n, \by_n; t_{n+1}, \bx_{n+1}, \by_{n+1}) \\ \boldF_{\by}(t_n, \bx_n, \by_n; t_{n+1}, \bx_{n+1}, \by_{n+1})\end{pmatrix}, \nonumber
\end{align}
where we have denoted the components of $\boldF$ valued in $X$ and $Y$ as $\boldF_\bx$ and $\boldF_\by$, respectively. For concreteness, we consider the first-order semi-implicit Euler scheme, although higher-order semi-implicit schemes, such as semi-implicit Runge--Kutta schemes \cite{Boscarino.2016,Boscarino.2023} or the recently introduced nonlinearly partitioned Runge--Kutta schemes \cite{nprk1, nprk2, nprk-mr}, could be treated similarly.

In \eqref{eq:partitioned-vector-field}, we have split the dependence of $\boldf$ on the state variables into two arguments; in the semi-implicit Euler scheme \eqref{eq:coupled-semi-imp-euler}, the first argument is treated explicitly while the second argument is treated implicitly. As such, a partitioned vector field is usually chosen so that $\boldF(t_1, \bx_1, \by_1; t_2, \bx_2, \by_2)$ has non-stiff dependence on $(t_1, \bx_1, \by_1)$ and stiff dependence on $(t_2, \bx_2, \by_2)$. This is a fairly general decomposition of the state dynamics since it includes additive implicit-explicit (IMEX) splittings \cite{Ascher97, Kennedy.2003tv4} and, more generally, nonlinear IMEX splittings \cite{Boscarino.2016,Boscarino.2023, Boscarino.2015, nprk1, nprk-mr}.

\sloppy For a fixed timestep $t_n \mapsto t_{n+1}$, since $(\bx_n,\by_n)$ is known, \eqref{eq:coupled-semi-imp-euler} can be regarded as a generally nonlinear equation $G(\bx_{n+1}, \by_{n+1}) = 0$ in the unknowns $(\bx_{n+1},\by_{n+1})$, where
\begin{align}\label{eq:newton-form-F}
    G(\bx_{n+1}, \by_{n+1}) &:=  \begin{pmatrix} \bx_{n+1} \\ \by_{n+1} \end{pmatrix} - \begin{pmatrix} \bx_n \\ \by_n \end{pmatrix} - \Delta t \begin{pmatrix} F_{\bx}(t_n, \bx_n, \by_n; t_{n+1}, \bx_{n+1}, \by_{n+1}) \\ F_{\by}(t_n, \bx_n, \by_n; t_{n+1}, \bx_{n+1}, \by_{n+1})\end{pmatrix}
\end{align}
The nonlinear equation $G(\bx_{n+1}, \by_{n+1}) = 0$ can be iteratively solved by successive linearization. Here, we consider applying Newton's method to this equation: denoting the $k^{th}$ iterate of Newton's method as $(\bx_{n+1}^k, \by_{n+1}^k)$ with initial iterate $(\bx_{n+1}^0, \by_{n+1}^0) = (\bx_{n}, \by_{n})$, Newton's method applied to \eqref{eq:newton-form-F} is given by
\begin{equation}\label{eq:newton-method}
    \begin{pmatrix}
        N_\bxx & N_\bxy \\ N_\byx & N_\byy
    \end{pmatrix} \left(  \begin{pmatrix} \bx_{n+1}^{k+1} \\ \bx_{n+1}^{k+1}  \end{pmatrix} - \begin{pmatrix} \bx_{n+1}^{k} \\ \bx_{n+1}^{k}  \end{pmatrix}  \right) = - G(\bx_{n+1}^k, \by_{n+1}^k),
\end{equation}
where the blocks are defined as
\begin{align*}
        N_\bxx &:= I_X - \Delta t D_{\bx_{n+1}} \boldF_\bx(t_n, \bx_n, \by_n; t_{n+1}, \bx_{n+1}^k, \by_{n+1}^k) , \\
        N_\bxy &:= -\Delta t D_{\by_{n+1}} \boldF_\bx(t_n, \bx_n, \by_n; t_{n+1}, \bx_{n+1}^k, \by_{n+1}^k), \\
        N_\byx &:= -\Delta t D_{\bx_{n+1}} \boldF_\by(t_n, \bx_n, \by_n; t_{n+1}, \bx_{n+1}^k, \by_{n+1}^k), \\
        N_\byy &:= I_Y -\Delta t D_{\by_{n+1}} \boldF_\by(t_n, \bx_n, \by_n; t_{n+1}, \bx_{n+1}^k, \by_{n+1}^k),
\end{align*}
where $I_X$ and $I_Y$ denote the identity on $X$ and $Y$, respectively, and note that the blocks of course depend on the current timestep and iterate, but we have suppressed this dependence for brevity. Each iterate of Newton's method \eqref{eq:newton-method} involves inverting the block operator 
$$ N :=  \begin{pmatrix} N_\bxx & N_\bxy \\ N_\byx & N_\byy \end{pmatrix} $$
to obtain the updated iterate $(\bx_{n+1}^{k+1}, \by_{n+1}^{k+1})$. Note that $N$ is invertible, assuming sufficiently small $\Delta t$, since $N$ is a perturbation of the identity. The inverse of $N$ can be computed using the Schur complement \cite{SchurComplement}; expressing $N$ in the form
\begin{equation}\label{eq:schur-complement}
    N =  \begin{pmatrix} N_\bxx & N_\bxy \\ N_\byx & N_\byy \end{pmatrix} = \begin{pmatrix} I_X & N_\bxy N_\byy^{-1} \\ 0 & I_Y \end{pmatrix} \begin{pmatrix} N_\bx - N_\bxy N_\byy^{-1} N_\byx & 0 \\ 0 & N_\byy \end{pmatrix} \begin{pmatrix} I_X & 0 \\ N_\byy^{-1} N_\byx & I_Y \end{pmatrix},
\end{equation}
its inverse is given by
\begin{equation}\label{eq:schur-complement-inverse}
    N^{-1} = \begin{pmatrix} I_X & 0 \\ - N_\byy^{-1} N_\byx & I_Y \end{pmatrix} \begin{pmatrix} (N_\bxx - N_\bxy N_\byy^{-1} N_\byx)^{-1} & 0 \\ 0 & (N_\byy)^{-1} \end{pmatrix} \begin{pmatrix} I_X &  - N_\bxy N_\byy^{-1} \\ 0 & I_Y \end{pmatrix}.
\end{equation}
Particularly, by using the Schur complement, inverting the linear operator $N$ of size $(\dim(X) + \dim(Y))^2$ is replaced by inversions of linear operators $N_\bxx - N_\bxy N_\byy^{-1} N_\byx$ of size $\dim(X)^2$ and $N_\byy$ of size $\dim(Y)^2$. 

\textbf{Time integration of the adjoint system.}
In principle, one can integrate the adjoint equation above \eqref{eq:adjoint-coupled-evolution-equation} with any choice of (order consistent) time integration scheme, independent of the choice of integration scheme for the state dynamics. However, practical and structural considerations narrow the choice of time integration scheme.

As a practical consideration, note that the operator appearing in the adjoint equation \eqref{eq:adjoint-coupled-evolution-equation} is the adjoint of the linearization of the operator defining the state dynamics \eqref{eq:coupled-evolution-equation} (with a minus sign, but this is accounted for since the adjoint equation evolves in reverse time). In the $\mathbb{R}^N$ setting, since a matrix and its transpose have the same field of values \cite{fov}, a similar solution approach should be employed for the adjoint equation. Furthermore, as a structural consideration, since the adjoint and variational systems together satisfy the conservation law \eqref{eq:adjoint-variational-cons-coupled} in continuous time, choosing a time integration scheme for the adjoint equation that preserves this conservation law at the discrete level would provide a structure-preserving discretization of the adjoint system (for a discussion of structure-preserving methods, see for example \cite{MaWe2001, Hairer.2006}).

In \cite{TrSoLe2024}, we showed that, given a one-step time integration scheme for the state dynamics $\bu_n \mapsto \bu_{n+1}$, there exists a unique time integration scheme $\bp_{n+1} \mapsto \bp_n$ for the adjoint equation (evolving backward in time) such that a discrete analogue of the adjoint-variational quadratic conservation law holds; this resolves the structural consideration discussed above. In fact, as we will now discuss, this scheme also resolves the practical consideration.

To define this induced method, we recall the semi-implicit Euler scheme for the state dynamics \eqref{eq:coupled-semi-imp-euler}, which, for brevity, we write as
$$ \bu_{n+1} = \bu_n + \Delta t\, \boldF(t_n, \bu_n;\, t_{n+1}, \bu_{n+1}), $$
where $\bu := (\bx, \by)$. The discrete variational equation is obtained by linearizing the above scheme:
$$ \delta \bu_{n+1} = \delta \bu_n + \Delta t D_{\bu_n} \boldF(t_n, \bu_n;\, t_{n+1}, \bu_{n+1}) \delta \bu_n + \Delta t D_{\bu_{n+1}} \boldF(t_n, \bu_n;\, t_{n+1}, \bu_{n+1}) \delta \bu_{n+1}. $$
Solving for $\delta \bu_{n+1}$ in terms of $\delta \bu_n$ yields
$$ \delta \bu_{n+1} = (I - \Delta t D_{\bu_{n+1}} \boldF(t_n, \bu_n;\, t_{n+1}, \bu_{n+1}))^{-1} (I + \Delta t D_{\bu_n} \boldF(t_n, \bu_n;\, t_{n+1}, \bu_{n+1})) \delta \bu_n .$$
We then define the map $\bp_{n+1} \mapsto \bp_n$ to satisfy $\llangle \bp_n,\delta \bu_n\rrangle = \llangle \bp_{n+1}, \delta \bu_{n+1}\rrangle$. Namely, we have the following time integration scheme for the adjoint equation:
\begin{equation}\label{eq:adjoint-semi-imp-time-int}
    \bp_n =  (I + \Delta t D_{\bu_n} \boldF(t_n, \bu_n;\, t_{n+1}, \bu_{n+1}))^* (I - \Delta t D_{\bu_{n+1}} \boldF(t_n, \bu_n;\, t_{n+1}, \bu_{n+1}))^{-*} \bp_{n+1}.
\end{equation}
By construction, this map satisfies a discrete analogue of the conservation law \eqref{eq:adjoint-variational-cons-coupled}, namely,
\begin{equation}\label{eq:discrete-conservation}
    \llangle \bp_n,\delta \bu_n\rrangle = \llangle \bp_{n+1}, \delta \bu_{n+1}\rrangle,
\end{equation}
and, as shown in \cite{TrSoLe2024}, it is the unique method to do so.

\begin{remark}
    Let us compare the scheme \eqref{eq:adjoint-semi-imp-time-int} to an obvious choice for a time integration scheme of the adjoint equation, which is to apply the semi-implicit Euler scheme (in reverse time) to the adjoint equation \eqref{eq:adjoint-coupled-evolution-equation}. In this case, the scheme is
    \begin{align*}
        \bp_n = \bp_{n+1} + \Delta t [D_{\bu_{n+1}} \boldF(t_n, \bu_n;\, t_{n+1}, \bu_{n+1}))]^* \bp_n + \Delta t [D_{\bu_{n}} \boldF(t_n, \bu_n;\, t_{n+1}, \bu_{n+1}))]^* \bp_{n+1}.
    \end{align*}
    Solving for $\bp_n$ yields
    \begin{equation}\label{eq:adjoint-semi-imp-non-cons}
        \bp_n = (I - \Delta t D_{\bu_{n+1}} \boldF(t_n, \bu_n;\, t_{n+1}, \bu_{n+1}))^{-*}(I + \Delta t D_{\bu_n} \boldF(t_n, \bu_n;\, t_{n+1}, \bu_{n+1}))^* \bp_{n+1}.
    \end{equation}
    Comparing the methods \eqref{eq:adjoint-semi-imp-time-int} to \eqref{eq:adjoint-semi-imp-non-cons}, we see that the order in which the operators are applied is swapped. In fact, both methods are the same to leading order, which can be seen by simply Taylor expanding using the identity $(I - \Delta t L)^{-1} = I + \Delta t L + \mathcal{O}(\Delta t^2)$. Expanding both methods yields the same equation up to leading order $\mathcal{O}(\Delta t)$,
    \begin{align*}
        \bp_n = \bp_{n+1} & + \Delta t [D_{\bu_n} \boldF(t_n, \bu_n;\, t_{n+1}, \bu_{n+1})]^* \bp_{n+1} \\
        & + \Delta t [D_{\bu_{n+1}} \boldF(t_n, \bu_n;\, t_{n+1}, \bu_{n+1})]^* \bp_{n+1} + \mathcal{O}(\Delta t^2). 
    \end{align*}
    However, both methods are generally different (except in the trivial case that either $D_{\bu_n}\boldF = 0$ or $D_{\bu_{n+1}}\boldF = 0$, since the semi-implicit method reduces to either implicit Euler or forward Euler, respectively). Particularly, \eqref{eq:adjoint-semi-imp-time-int} satisfies the discrete conservation law \eqref{eq:discrete-conservation}, whereas \eqref{eq:adjoint-semi-imp-non-cons} does not.
\end{remark}

As we have seen, the above method \eqref{eq:adjoint-semi-imp-time-int} has resolved the structural consideration since it satisfies a discrete conservation law \eqref{eq:discrete-conservation}. Let us now turn our attention to the practical consideration. First, by assumption, since $\boldF(t_n, \bu_n;\, t_{n+1}, \bu_{n+1})$ is chosen such that its dependence on $\bu_n$ is non-stiff and its dependence on $\bu_{n+1}$ is stiff, the solution strategy for the state dynamics carries over to the solution strategy for the adjoint equation. Namely, in the method \eqref{eq:adjoint-semi-imp-time-int}, the operator being inverted is 
\begin{equation}\label{eq:adjoint-operator-to-invert}
    (I - \Delta t D_{\bu_{n+1}} \boldF(t_n, \bu_n;\, t_{n+1}, \bu_{n+1}))^*,
\end{equation}
which is stiff by assumption, whereas the operator being applied explicitly (i.e., without inversion) is
\begin{equation}
    (I + \Delta t D_{\bu_n} \boldF(t_n, \bu_n;\, t_{n+1}, \bu_{n+1}))^*,
\end{equation}
which is non-stiff by assumption.

Furthermore, let us consider inverting the operator \eqref{eq:adjoint-operator-to-invert} in block form. A direct calculation shows that the operator \eqref{eq:adjoint-operator-to-invert} is precisely the adjoint of the operator $N$ used in Newton's method for iteratively solving the state dynamics \eqref{eq:newton-method}. That is, 
$$ (I - \Delta t D_{\bu_{n+1}} \boldF(t_n, \bu_n;\, t_{n+1}, \bu_{n+1}))^* = N^* =  \begin{pmatrix} N_\bxx^* & N_\byx^* \\ N_\bxy^* & N_\byy^* \end{pmatrix}. $$
Analogous to the forward linear solve from Newton's method, the Schur complement decomposition of $N^*$ can be computed as
\begin{equation}\label{eq:schur-complement-adjoint}
    N^* =  \begin{pmatrix} N_\bxx^* & N_\byx^* \\ N_\bxy^* & N_\byy^* \end{pmatrix} = \begin{pmatrix} I_{X^*} & N_\byx^* N_\byy^{-*} \\ 0 & I_{Y^*} \end{pmatrix} \begin{pmatrix} N_\bxx^* - N_\byx^* N_\byy^{-*} N_\bxy^* & 0 \\ 0 & N_\byy^* \end{pmatrix} \begin{pmatrix} I_{X^*} & 0 \\ N_\byy^{-*} N_\bxy^* & I_{Y^*} \end{pmatrix}.
\end{equation}
Also, note that, for each timestep of the state dynamics, $N$ is inverted at each iteration of Newton's method (until convergence), whereas for the adjoint dynamics, $N^*$ is inverted once per timestep. 

\begin{remark}\label{rmk:precon}
    As another practical matter, note that if $N$ is poorly conditioned and one utilizes a preconditioner to invert the action of this operator, one would expect also to utilize a preconditioner to invert the action of $N^*$. Interpreting the preconditioning of the linear solves for the state dynamics as a transformation of the base space, the preconditioning of the linear solves for the adjoint equation provides another example of adjoint preconditioning induced from a transformation of the state dynamics, discussed in \Cref{sec:induced-from-state}.
\end{remark}

\section{Numerical example}\label{sec:numerical}
For the numerical example, we consider the radiation diffusion equations describing the isotropic transport of radiation in an optically thick medium \cite{mihalas_1984, castor_2004}, which is a system of coupled partial differential evolution equations of the form
\begin{subequations}\label{eq:radiation-diffusion}
    \begin{align}
        \frac{\partial}{\partial t} E &= \nabla \cdot ( \mathcal{D}(T) \nabla E ) - \sigma_a(T) (E- acT^4), \label{eq:radiation-diffusion-E} \\
        \rho c_v \frac{\partial}{\partial t} T &= \sigma_a(T) (E-ac T^4), \label{eq:radiation-diffusion-T}
    \end{align}
\end{subequations}
where $E$ is the radiation energy density, $T$ is the temperature, $\mathcal{D}(T)$ is the radiation diffusion coefficient which depends nonlinearly on $T$, $\sigma_a(T)$ is the absorption opacity which depends nonlinearly on $T$, $c$ is the speed of light, $a$ is the radiation constant, $\rho$ is the density, and $c_v$ is the specific heat capacity. In \eqref{eq:radiation-diffusion}, $E(t,x)$ and $T(t,x)$ are the unknowns, and the remaining data is specified.

Upon spatial semi-discretization, the system takes the form of a finite-dimensional coupled evolution equation: 
\begin{subequations}\label{eq:radiation-diffusion-semi-discrete}
    \begin{align}
        M \frac{d}{dt} \bE &= K(\bT) \bE - \Sigma_a(\bT)\bE + \Em(\bT,\bT),  \\
        M_{\rho c_v} \frac{d}{dt} \bT &= \Sigma_a(\bT)\bE - \Em(\bT,\bT).
    \end{align}
\end{subequations}
For this example, we consider an interior penalty discontinuous Galerkin discretization (see, e.g., \cite{ipdg}); the state space can be identified with $(\bE,\bT) \in \mathbb{R}^N \times \mathbb{R}^N$ using the coefficients $\bE$, $\bT$ of the finite element expansion as the coordinates. Here, $M$ is the mass matrix corresponding to \eqref{eq:radiation-diffusion-E}, $M_{\rho c_v}$ is the mass matrix with coefficient $\rho c_v$ corresponding to \eqref{eq:radiation-diffusion-T}, $K(\bT)$ is the discretization of the temperature-dependent diffusion operator $\nabla \cdot ( \mathcal{D}(T) \nabla (\ \cdot\ ))$, $ \Sigma_a(\bT)$ is a temperature-dependent bilinear form corresponding to the coefficient $\sigma_a(T)$, and $\Em(\bT_1, \bT_2)$ is a nonlinear form corresponding to $ac\sigma_a(T_1) T_2^4$; note that we have split the temperature dependence of $\Em(\bT_1, \bT_2)$ into two arguments, since we will treat the opacity dependence explicitly and the $T^4$ factor implicitly. The spatial semi-discretization for this example was performed using the finite element library MFEM \cite{mfem, mfem-web}.

The corresponding adjoint equations, utilizing mass matrices as discussed in \Cref{ex:mass-matrices}, are given by
\begin{subequations}\label{eq:adjoint-radiation-diffusion-semi-discrete}
    \begin{align}
        M \frac{d}{dt} \vec{p}_E &= -K(\bT) \vec{p}_E + \Sigma_a(\bT) \vec{p}_E - \Sigma_a(\bT) \vec{p}_T,  \\
        M_{\rho c_v} \frac{d}{dt} \vec{p}_T &= D_\bT \Em(\bT,\bT)\vec{p}_T - (D_\bT \Sigma_a(\bT) \vec{p}_T)\bE + (D_\bT \Sigma_a(\bT) \vec{p}_E) \bE \\
        & \qquad - D_\bT \Em(\bT,\bT) \vec{p}_E - (D_\bT K(\bT)\vec{p}_E)\bE. \nonumber
    \end{align}
\end{subequations}

For the state dynamics, we consider a semi-implicit scheme where the temperature dependence of the diffusion coefficient and absorption opacity are treated explicitly and the remaining quantities are treated implicitly, i.e.,
\begin{subequations}\label{eq:rad-diff-semi-imp}
    \begin{align}
        M \frac{\bE_{n+1} - \bE_n}{\Delta t} &= K(\bT_n) \bE_{n+1} -  \Sigma_a(\bT_{n})\bE_{n+1} + \Em(\bT_n,\bT_{n+1}),  \\
        M_{\rho c_v} \frac{\bT_{n+1} - \bT_n}{\Delta t} &=  \Sigma_a(\bT_{n})\bE_{n+1} - \Em(\bT_n,\bT_{n+1}),
    \end{align}
\end{subequations}
which is the standard approach used in deterministic radiation simulation for many years \cite{Larsen.1988}. This nonlinear system is solved using Newton's method and the Schur complement, whereas the adjoint equation \eqref{eq:adjoint-radiation-diffusion-semi-discrete} is solved using the induced time integration method, as described in \Cref{sec:time-integration}.
All of the experiments performed use the preconditioned conjugate gradient method \cite{pcg} with an algebraic multigrid (AMG) preconditioner \cite{amg} to precondition the linear solves in both the forward and backward runs, as discussed in \Cref{rmk:precon}. When run without a preconditioner, the forward and backward runs did not converge within a reasonable runtime due to the stiffness of the nonlinear diffusion, with the diffusion coefficient $\mathcal{D}(T)$ varying on the order of $\max(T)^3/\min(T)^3 = 1200^3/0.025^3 \approx 1.1 \times 10^{14}$ over the domain.

For this example, we consider the following optimization problem, given by an inverse problem of a perturbation of a thick Marhsak wave. Here, the spatial domain is $[0,l] \ni x$ where $l = \SI{0.25}{\cm}$, and the time interval is $[0, t_f]$ where $t_f = \SI{1e-8}{\s}$. We use a timestep $\Delta t = \SI{5e-13}{\s}$, with $N = 100$ spatial degrees of freedom for both $\bE$ and $\bT$. For the remaining problem data, we take $c = \SI{2.99792e10}{\cm\per\s}$, $a = \SI{137.2}{\erg\per\cm\cubed\per\eV\tothe{4}}$, $\rho = \SI{1}{\g\per\cm\cubed}$, $c_v = \SI{3e12}{\erg\per\cm\cubed\per\eV}$, $\sigma_a(T) = 10^{12}/T^3\,\si{\per\cm}$, and $\mathcal{D}(T) = (c/3)/\sigma_a(T)\, \si{\cm\squared\per\s}$.

First, consider the dynamics of a thick Marshak wave propagating over a constant blackbody equilibrium state with initial condition $T(0,x) = 0.025$ and $E(0,x) = acT(0,x)^4 \approx 1.61 \times 10^6$ for all $x$. The Marshak wave is driven by imposing a surface temperature $T(t,0) = 1200$ at $x=0$ for all $t>0$; for a more detailed discussion of the dynamics of the Marshak wave, see, e.g., \cite{marshak}. Snapshots of the evolution of the thick Marshak wave over the constant equilibrium initial state are shown in \Cref{fig:const_ET}.
\begin{figure}[H]
\begin{center}
\includegraphics[width=160mm]{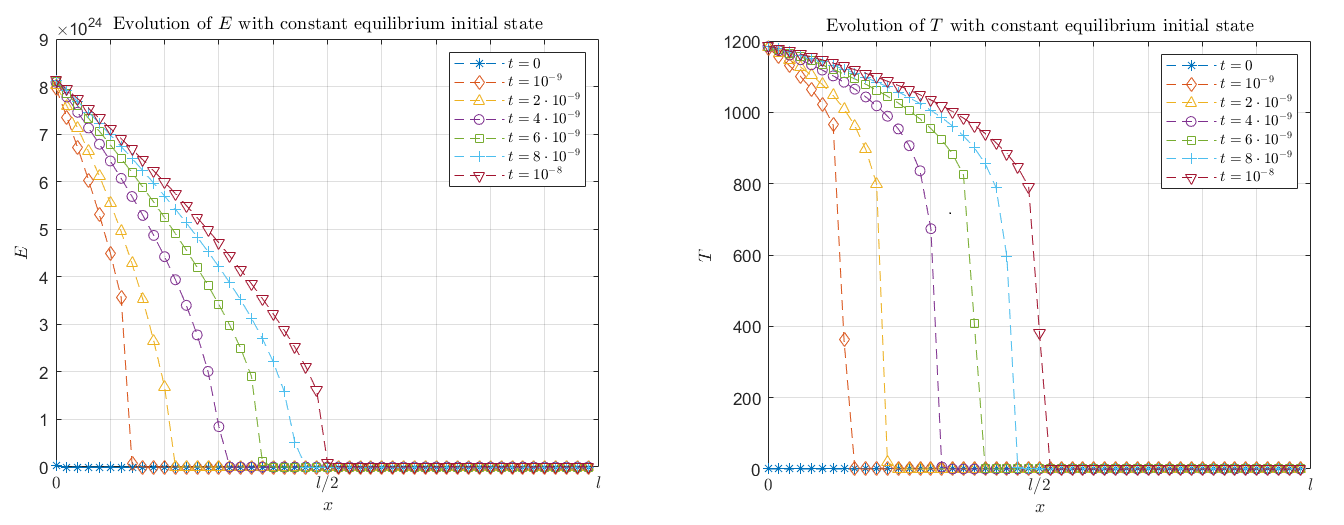}
\caption{Snapshots of the evolution of $E$ (left) and $T$ (right) of the thick Marshak wave with a constant equilibrium initial state.}
\label{fig:const_ET}
\end{center}
\end{figure}

Now, we consider the following inverse problem, which is to reconstruct the initial conditions $E(0)$ and $T(0)$ such that the radiation energy and temperature at the final time, $E(t_f)$ and $T(t_f)$, match some observed state at the final time, $E^*$ and $T^*$, subject to the dynamics of the thick Marshak wave. We will further impose the constraint that the initial state $E(0)$ and $T(0)$ are in local equilibrium, i.e., $E(0,x) = ac T(0,x)^4$ for all $x \in (0,l)$ (this is local in the sense that $E(0)$ and $T(0)$ are still allowed to vary in space, unlike the initial state shown in \Cref{fig:const_ET}). This can be expressed as a constrained optimization problem of the form 
\begin{align}\label{eq:opt-problem-ET}
    \min_{E_0, T_0}\,& C(E(t_f),T(t_f)) := \underbrace{\frac{1}{2} \| E(t_f) - E^*\|_{L^2}^2}_{=:\ C_E} + \underbrace{\frac{1}{2} \| T(t_f) - T^*\|_{L^2}^2}_{=:\ C_T}, \\
    &\text{such that the state dynamics are given by } \eqref{eq:radiation-diffusion}, \nonumber \\
    &\text{with } E(0,x) = E_0(x), T(0,x) = T_0(x) \text{ satisfying } E_0(x) = acT_0(x)^4 \text{ for all } x \in (0,l). \nonumber
\end{align}
For the observed values $E^*$ and $T^*$ at the final time, we consider a perturbation of the thick Marshak wave at the final time. This is shown in \Cref{fig:perturbed_ET}.
\begin{figure}[H]
\begin{center}
\includegraphics[width=160mm]{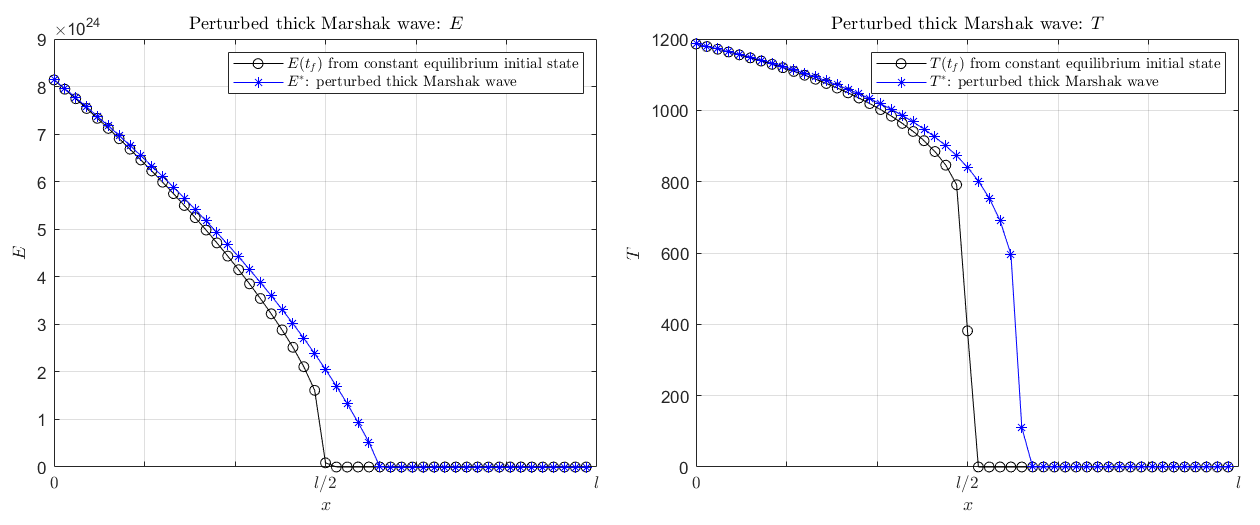}
\caption{The observed values $E^*$ compared to $E(t_f)$ produced from the constant equilibrium initial conditions (left) and $T^*$ compared to $T(t_f)$ produced from the constant equilibrium initial conditions (right).}
\label{fig:perturbed_ET}
\end{center}
\end{figure}

We solve this optimization problem using an adjoint-based gradient approach, as discussed in \Cref{sec:duality}, modified appropriately for the local equilibrium constraint. To handle the local equilibrium constraint for the initial conditions, we use the projected gradient descent method (see, e.g., \cite{projGD}). Furthermore, we will consider two different projections: the orthogonal projection onto the constraint and a coordinate projection onto the constraint. 

Define the constraint function on the space of initial conditions, $\Phi: \mathbb{R}^N \times \mathbb{R}^N \rightarrow \mathbb{R}^N$, whose zero level set defines the constraint set. Its $i^{th}$ component is given by 
\begin{equation}\label{eq:constraint-components}
    \Phi_i(\bE_0, \bT_0) = (\bE_0)_i - ac(\bT_0)_i^4.
\end{equation}
At the current iterate $(\bE^k_0, \bT^k_0)$, the projected gradient descent method begins with an unconstrained gradient step
\begin{subequations}\label{eq:unconstrained-grad-step}
    \begin{align}
    \bE^{\text{un}}_0 &:= \bE^{k}_0 - \gamma p_{\bE}^k(0), \\
    \bT^{\text{un}}_0 &:= \bT^{k}_0 - \gamma p_{\bT}^k(0), 
\end{align}
\end{subequations}
followed by a projection onto the constraint \eqref{eq:constraint-components}
\begin{subequations}\label{eq:oblique-projection-step}
\begin{align}
    \bE^{k+1}_0 &= \bE^{\text{un}}_0 - \text{diag}\big(\vec{\lambda}\big) \vec{\eta}_{\bE}(\bE^{\text{un}}_0, \bT^{\text{un}}_0), \\
    \bT^{k+1}_0 &= \bT^{\text{un}}_0 - \text{diag}\big(\vec{\lambda}\big) \vec{\eta}_{\bT} (\bE^{\text{un}}_0, \bT^{\text{un}}_0),  
\end{align}
\end{subequations}
where $\vec{\lambda} \in \mathbb{R}^N$ is a Lagrange multiplier chosen to satisfy the constraint $\Phi(\bE^{k+1}_0, \bT^{k+1}_0) = 0$ and $\vec{\eta} \coloneqq (\vec{\eta}_{\bE}(\bE^{\text{un}}_0, \bT^{\text{un}}_0), \vec{\eta}_{\bT}(\bE^{\text{un}}_0, \bT^{\text{un}}_0)) \in \mathbb{R}^N \times \mathbb{R}^N$ is an \emph{oblique} vector, i.e., a vector which has a non-zero component parallel to the derivative of the constraints
$$\begin{pmatrix} D_{\bE_0}\Phi(\bE_0, \bT_0) \\ D_{\bT_0}\Phi(\bE_0, \bT_0) \end{pmatrix}. $$
Here, the derivatives $D_{\vec{E}_0} \Phi$ and $ D_{\vec{T}_0} \Phi$ are interpreted component-wise, i.e., the $i^{th}$ component of $D_{\vec{E}_0} \Phi$ is $D_{(\vec{E}_0)_i} \Phi_i$ and the $i^{th}$ component of $D_{\vec{T}_0} \Phi$ is $D_{(\vec{T}_0)_i} \Phi_i$; this simplification arises since the constraint \eqref{eq:constraint-components} is local (the $i^{th}$ component $\Phi_i$ only depends on $(\vec{E}_0)_i$ and $(\vec{T}_0)_i$). 

For the oblique vector, one choice is to use the derivative of the constraints itself, in which case the projection is the orthogonal projection onto the constraints. The orthogonal projection step is then 
\begin{subequations}\label{eq:orthogonal-projection-step}
\begin{align}
    \bE^{k+1}_0 &= \bE^{\text{un}}_0 - \text{diag}\big(\vec{\lambda}\big) D_{\bE_0}\Phi(\bE^{\text{un}}_0, \bT^{\text{un}}_0), \\
    \bT^{k+1}_0 &= \bT^{\text{un}}_0 - \text{diag}\big(\vec{\lambda}\big) D_{\bT_0}\Phi(\bE^{\text{un}}_0, \bT^{\text{un}}_0).
\end{align}
\end{subequations}
With this choice of oblique vector used in \eqref{eq:oblique-projection-step}, since the constraint is local, each component of the constraint $\Phi_i(\bE^{k+1}_0, \bT^{k+1}_0) = 0$ can be solved for the corresponding component $\lambda_i$ of the Lagrange multiplier separately. 

Alternatively, we also consider a choice of an oblique vector given by $\vec{\eta} = (\vec{\eta}_{\bE}, 0)$. Since the constraint \eqref{eq:constraint-components} can be solved for $\vec{E}_0$ in terms of $\vec{T}_0$, this projection step can be done simply as
\begin{subequations}\label{eq:E-coordinate-projection-step}
\begin{align}
    \bT^{k+1}_0 &= \bT^{\text{un}}_0, \\
    \bE^{k+1}_0 &= \bE^{\text{un}}_0 - \text{diag}\big(\vec{\lambda}\big) \vec{\eta}_{\bT} (\bE^{\text{un}}_0, \bT^{\text{un}}_0) = a\left(\bT^{k+1}_0\right)^4, 
\end{align}
\end{subequations}
where the last equality is interpreted component-wise. This method has the simple property that one does not need to specify $\vec{\eta}_{\bE}$ nor solve for the Lagrange multiplier $\vec{\lambda}$, since they are implicitly defined by solving for $\bE^{k+1}_0$ in terms of $\bT^{k+1}_0$. We refer to this as the $\vec{E}$-coordinate oblique projection, since the projection onto the constraint \eqref{eq:E-coordinate-projection-step} is given by projecting in the $\vec{E}$-coordinate only.

In the unconstrained gradient step \eqref{eq:unconstrained-grad-step}, we use a constant stepsize $\gamma = 0.1$ across all runs for both projection methods. Due to the large scale-separation between the energy $E(t_f) \sim \mathcal{O}(10^{24})$ and temperature $T(t_f) \sim \mathcal{O}(10^3)$, we use the block-diagonal scale preconditioner described in \Cref{sec:scale-preconditoning-coupled} and compare various choices of values for the scale preconditioner. This is shown in \Cref{fig:costETorthog} for the orthogonal projection and \Cref{fig:costEToblique} for the $\vec{E}$-coordinate oblique projection. 
\begin{figure}[H]
\begin{center}
\includegraphics[width=160mm]{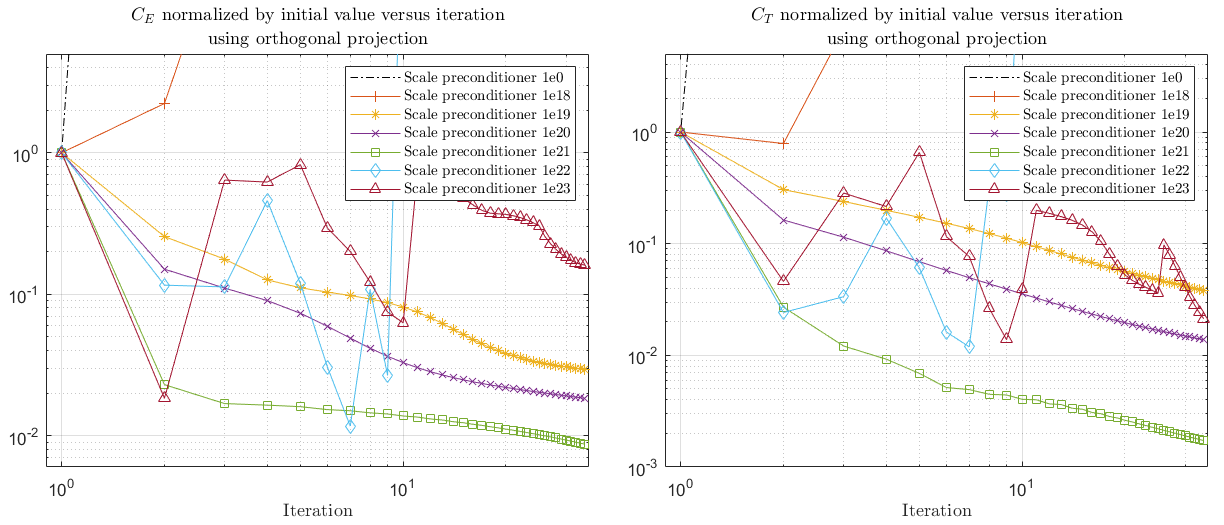}
\caption{$E$ component of the cost function (left) and $T$ component of the cost function (right) versus iteration, for several values of the scale preconditioner, using the orthogonal projection.}
\label{fig:costETorthog}
\end{center}
\end{figure}

\begin{figure}[H]
\begin{center}
\includegraphics[width=160mm]{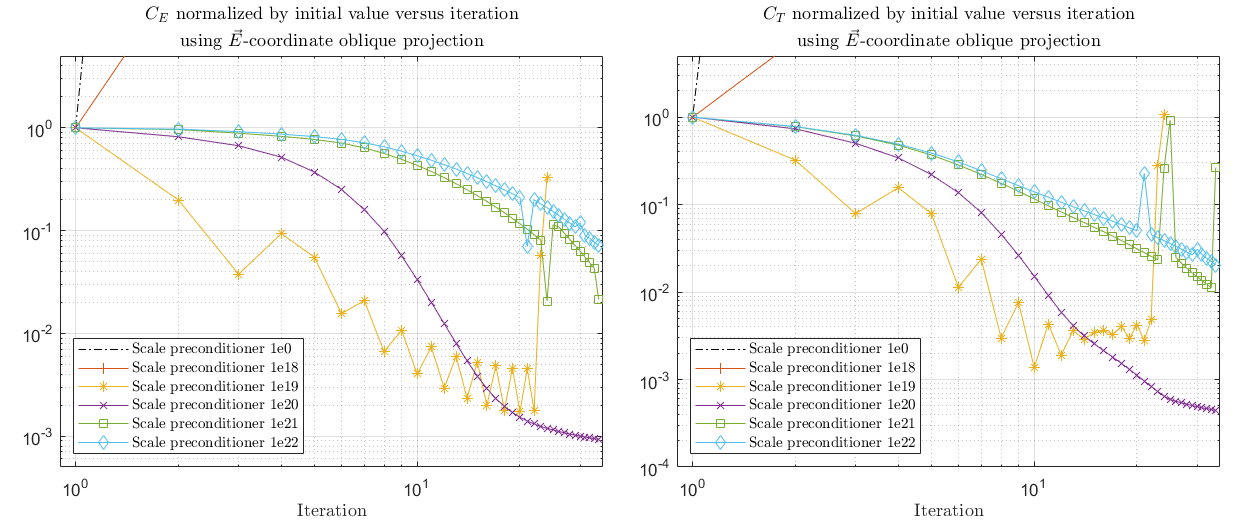}
\caption{$E$ component of the cost function (left) and $T$ component of the cost function (right) versus iteration, for several values of the scale preconditioner, using the $\vec{E}$-coordinate oblique projection.}
\label{fig:costEToblique}
\end{center}
\end{figure}
As can be seen in \Cref{fig:costETorthog} and \Cref{fig:costEToblique}, by utilizing a suitable scale preconditioner, the gradient-based optimization exhibits rapid, stable convergence in $\mathcal{O}(10)$ iterations. In practice, we also observed improved stability in both the forward and backward runs with a suitable scale preconditioner. When utilizing the naive gradient descent (i.e., scale preconditioner $1$), the cost immediately diverges at the first iterate; we also ran the naive gradient descent with a stepsize $\delta \sim 1e^{-21}$, sufficiently small for stability, but observed essentially no decrease in the cost function due to the improper scaling of the components of the adjoint equation discussed in \Cref{ex:scale-preconditioner-block}.

The reconstructed initial conditions from the lowest cost function values for the orthogonal projection and $\vec{E}$-coordinate oblique projection are shown in \Cref{fig:recon_ET}, along with the initial values used to generate the target states $E^*$ and $T^*$.
\begin{figure}[H]
\begin{center}
\includegraphics[width=160mm]{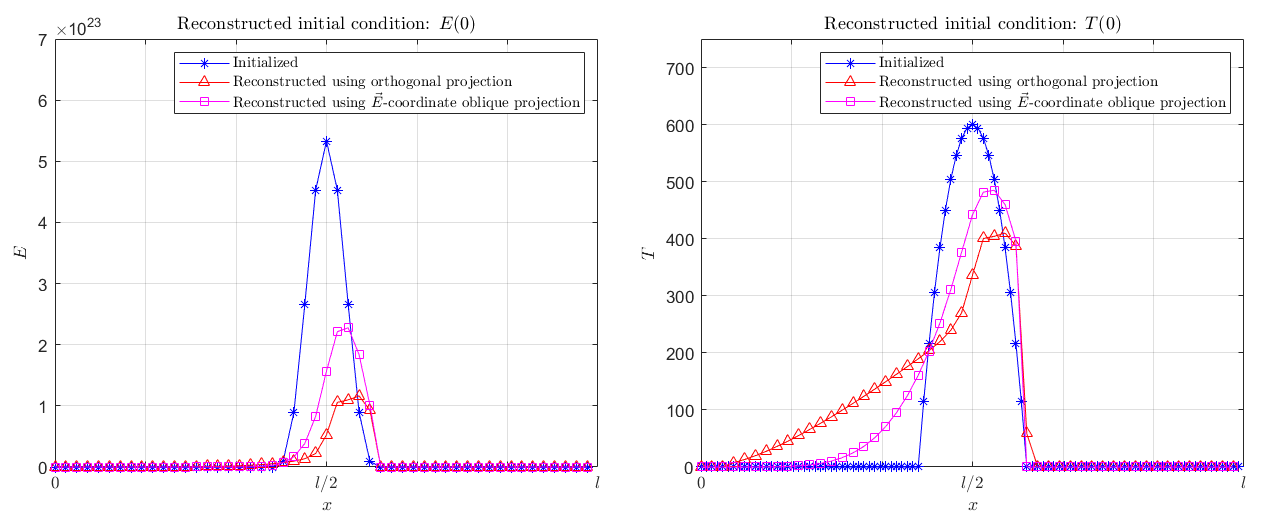}
\caption{The initialized and reconstructed initial conditions, $E(0)$ (left) and $T(0)$ (right).}
\label{fig:recon_ET}
\end{center}
\end{figure}

Finally, a comparison of the final states from the constant equilibrium initial conditions (i.e., the unperturbed thick Marshak wave), the observed final states, and the reconstructed final states using the orthogonal projection and $\vec{E}$-coordinate oblique projection are shown in \Cref{fig:comp_uor_ET}.

\begin{figure}[H]
\begin{center}
\includegraphics[width=160mm]{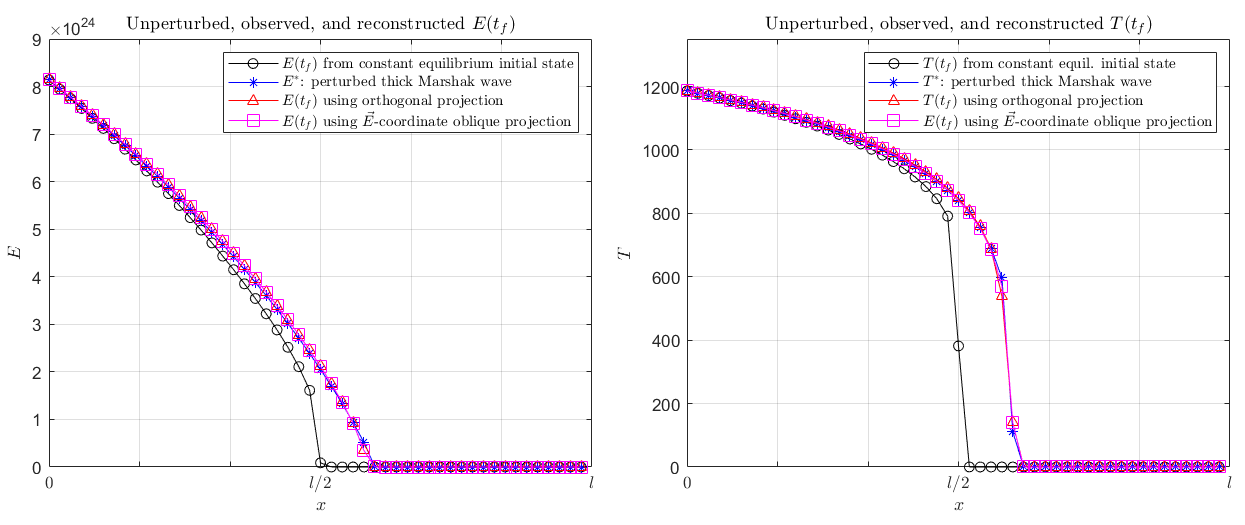}
\caption{Comparison of the unperturbed, observed and reconstructed values at the final time for $E$ (left) and $T$ (right).}
\label{fig:comp_uor_ET}
\end{center}
\end{figure}
\begin{remark}
    It is interesting to note that the initial states, shown in \Cref{fig:recon_ET}, are qualitatively much different between the initialized, reconstructed using orthogonal projection, and reconstructed using the $\vec{E}$-coordinate oblique projection. On the other hand, the terminal states are in close agreement, as shown in \Cref{fig:comp_uor_ET}. This can be understood from the ill-posedness of the inverse problem, in that multiple initial states can evolve toward the same terminal state, particularly due to the nonlinear diffusion appearing in \Cref{eq:radiation-diffusion}.
\end{remark}

\section{Conclusion}
In this paper, we introduced preconditioning transformations of adjoint systems for evolution equations inspired by the preconditioning of gradient descent in unconstrained optimization. We showed that these transformations can be naturally interpreted as symplectomorphisms of the canonical adjoint system and hence, preserve the property the adjoint equation backpropagates the derivative of an objective function. As an application, we discussed adjoint systems for coupled evolution equations and particularly discussed scale preconditioning of the adjoint equation for evolution equations that exhibit large scale-separation. An example of this scale preconditioning was used in the numerical example, where we considered an inverse problem for the radiation diffusion equations, which is a coupled evolution equation exhibiting large scale-separation.

For future work, it would be interesting to utilize the adjoint preconditioning induced from a state-dependent duality pairing to develop preconditioned adjoint methods that are adapted to the state dynamics; this would be particularly interesting for state dynamics where the features of the solution vary drastically over time. It would also be interesting to develop adjoint preconditioners for spatially multiscale problems. Furthermore, we plan to explore applications of the adjoint preconditioning induced from transformation of the state dynamics, particularly when these transformations are nonlinear, e.g., for systems with state-dependent mass matrices.

\section*{Acknowledgements}
BKT was supported by the Marc Kac Postdoctoral Fellowship at the Center for Nonlinear Studies at Los Alamos National Laboratory. BSS was supported by the DOE Office of Advanced Scientific Computing Research Applied Mathematics program through Contract No. 89233218CNA000001. HFB was supported by the Los Alamos National Laboratory Summer Student Program 2024, by the Laboratory Directed Research and Development program of Los Alamos National Laboratory under project number 20230068DR, and by the Department of Defense through the National Defense Science \& Engineering Graduate Fellowship Program. Los Alamos National Laboratory Report LA-UR-25-24953.

\section*{Data Availability Statement}
The data generated in this study is available upon reasonable request from the corresponding author.

\appendix


\bibliographystyle{plainnat}
\bibliography{adjointprec.bib}

\begin{thebibliography}{58}
\providecommand{\natexlab}[1]{#1}
\providecommand{\url}[1]{\texttt{#1}}
\expandafter\ifx\csname urlstyle\endcsname\relax
  \providecommand{\doi}[1]{doi: #1}\else
  \providecommand{\doi}{doi: \begingroup \urlstyle{rm}\Url}\fi

\bibitem[Amari(1998)]{natgrad0}
Shun-ichi Amari.
\newblock Natural gradient works efficiently in learning.
\newblock \emph{Neural Computation}, 10\penalty0 (2):\penalty0 251--276, 1998.
\newblock \doi{10.1162/089976698300017746}.

\bibitem[Anderson et~al.(2021)Anderson, Andrej, Barker, Bramwell, Camier,
  Cerveny, Dobrev, Dudouit, Fisher, Kolev, Pazner, Stowell, Tomov, Akkerman,
  Dahm, Medina, and Zampini]{mfem}
R.~Anderson, J.~Andrej, A.~Barker, J.~Bramwell, J.-S. Camier, J.~Cerveny,
  V.~Dobrev, Y.~Dudouit, A.~Fisher, Tz. Kolev, W.~Pazner, M.~Stowell, V.~Tomov,
  I.~Akkerman, J.~Dahm, D.~Medina, and S.~Zampini.
\newblock {MFEM}: A modular finite element methods library.
\newblock \emph{Computers \& Mathematics with Applications}, 81:\penalty0
  42--74, 2021.
\newblock \doi{10.1016/j.camwa.2020.06.009}.

\bibitem[Arnold et~al.(2000)Arnold, Brezzi, Cockburn, and Marini]{ipdg}
Douglas~N. Arnold, Franco Brezzi, Bernardo Cockburn, and Donatella Marini.
\newblock Discontinuous {G}alerkin methods for elliptic problems.
\newblock In Bernardo Cockburn, George~E. Karniadakis, and Chi-Wang Shu,
  editors, \emph{Discontinuous {G}alerkin Methods}, pages 89--101, Berlin,
  Heidelberg, 2000. Springer Berlin Heidelberg.
\newblock ISBN 978-3-642-59721-3.

\bibitem[Ascher et~al.(1997)Ascher, Ruuth, and Spiteri]{Ascher97}
Uri~M. Ascher, Steven~J. Ruuth, and Raymond~J. Spiteri.
\newblock Implicit-explicit {R}unge-{K}utta methods for time-dependent partial
  differential equations.
\newblock \emph{Applied Numerical Mathematics}, 25\penalty0 (2):\penalty0
  151--167, 1997.
\newblock ISSN 0168-9274.
\newblock \doi{https://doi.org/10.1016/S0168-9274(97)00056-1}.
\newblock Special Issue on Time Integration.

\bibitem[Beck and Teboulle(2003)]{mirrordescent1}
Amir Beck and Marc Teboulle.
\newblock Mirror descent and nonlinear projected subgradient methods for convex
  optimization.
\newblock \emph{Operations Research Letters}, 31\penalty0 (3):\penalty0
  167--175, 2003.
\newblock ISSN 0167-6377.
\newblock \doi{https://doi.org/10.1016/S0167-6377(02)00231-6}.

\bibitem[Bloch(2015)]{Bloch2015}
A.~M. Bloch.
\newblock \emph{Nonholonomic Mechanics and Control}.
\newblock Interdisciplinary Applied Mathematics. Springer New York, NY, 2015.
\newblock \doi{10.1007/978-1-4939-3017-3}.

\bibitem[Boscarino(2023)]{Boscarino.2023}
S.~Boscarino.
\newblock {High-Order Semi-implicit Schemes for Evolutionary Partial
  Differential Equations with Higher Order Derivatives}.
\newblock \emph{Journal of Scientific Computing}, 96\penalty0 (1):\penalty0 11,
  2023.
\newblock ISSN 0885-7474.
\newblock \doi{10.1007/s10915-023-02235-0}.

\bibitem[Boscarino et~al.(2015)Boscarino, B\"urger, Mulet, Russo, and
  Villada]{Boscarino.2015}
S.~Boscarino, R.~B\"urger, P.~Mulet, G.~Russo, and L.~M. Villada.
\newblock {Linearly Implicit {IMEX} {R}unge--{K}utta Methods for a Class of
  Degenerate Convection-Diffusion Problems}.
\newblock \emph{SIAM Journal on Scientific Computing}, 37\penalty0
  (2):\penalty0 B305--B331, 2015.
\newblock ISSN 1064-8275.
\newblock \doi{10.1137/140967544}.

\bibitem[Boscarino et~al.(2016)Boscarino, Filbet, and Russo]{Boscarino.2016}
S.~Boscarino, F.~Filbet, and G.~Russo.
\newblock {High Order Semi-implicit Schemes for Time Dependent Partial
  Differential Equations}.
\newblock \emph{Journal of Scientific Computing}, 68\penalty0 (3):\penalty0
  975--1001, 2016.
\newblock ISSN 0885-7474.
\newblock \doi{10.1007/s10915-016-0168-y}.

\bibitem[Buvoli and Southworth(2023)]{nprk1}
T.~Buvoli and B.~S. Southworth.
\newblock A new class of {R}unge--{K}utta methods for nonlinearly partitioned
  systems.
\newblock \emph{arXiv preprint: 2401.04859}, 2023.

\bibitem[Buvoli et~al.(2025)Buvoli, Tran, and Southworth]{nprk-mr}
T.~Buvoli, B.~K. Tran, and B.~S. Southworth.
\newblock Multirate {R}unge--{K}utta for nonlinearly partitioned systems.
\newblock \emph{arXiv preprint: 2504.03257}, 2025.

\bibitem[Cacuci(1981)]{Ca1981}
D.~G. Cacuci.
\newblock Sensitivity theory for nonlinear systems. {I}. {N}onlinear functional
  analysis approach.
\newblock \emph{J. Math. Phys.}, 22\penalty0 (12):\penalty0 2794--2802, 1981.

\bibitem[Campos et~al.(2015)Campos, Ober-Bl\"{o}baum, and
  Tr\'{e}lat]{CaObTr2014}
C\'{e}dric~M. Campos, Sina Ober-Bl\"{o}baum, and Emmanuel Tr\'{e}lat.
\newblock {High order variational integrators in the optimal control of
  mechanical systems}.
\newblock \emph{Discrete and Continuous Dynamical Systems}, 35\penalty0
  (9):\penalty0 4193--4223, 2015.
\newblock \doi{10.3934/dcds.2015.35.4193}.

\bibitem[Cao et~al.(2003)Cao, Li, Petzold, and Serban]{CaLiPeSe2003}
Y.~Cao, S.~Li, L.~Petzold, and R.~Serban.
\newblock Adjoint sensitivity analysis for differential-algebraic equations:
  The adjoint {DAE} system and its numerical solution.
\newblock \emph{SIAM J. Sci. Comput.}, 24\penalty0 (3):\penalty0 1076--1089 (14
  pages), 2003.

\bibitem[Castor(2004)]{castor_2004}
John~I. Castor.
\newblock \emph{Radiation Hydrodynamics}.
\newblock Cambridge University Press, 2004.
\newblock \doi{10.1017/CBO9780511536182}.

\bibitem[Chakrabarti et~al.(2020)Chakrabarti, Gupta, and Chopra]{precGD3}
Kushal Chakrabarti, Nirupam Gupta, and Nikhil Chopra.
\newblock Iterative pre-conditioning to expedite the gradient-descent method.
\newblock In \emph{2020 American Control Conference (ACC)}, pages 3977--3982,
  2020.
\newblock \doi{10.23919/ACC45564.2020.9147603}.

\bibitem[Derei et~al.(2024)Derei, Balberg, Heizler, Steinberg, McClarren, and
  Krief]{marshak}
Nitay Derei, Shmuel Balberg, Shay~I. Heizler, Elad Steinberg, Ryan~G.
  McClarren, and Menahem Krief.
\newblock The non-equilibrium {M}arshak wave problem in non-homogeneous media.
\newblock \emph{Physics of Fluids}, 36\penalty0 (12):\penalty0 127149, 2024.
\newblock \doi{10.1063/5.0244247}.

\bibitem[Dobrev et~al.(2012)Dobrev, Kolev, and Rieben]{DoKoRi2012}
Veselin~A. Dobrev, Tzanio~V. Kolev, and Robert~N. Rieben.
\newblock High-order curvilinear finite element methods for {L}agrangian
  hydrodynamics.
\newblock \emph{SIAM Journal on Scientific Computing}, 34\penalty0
  (5):\penalty0 B606--B641, 2012.
\newblock \doi{10.1137/120864672}.

\bibitem[Eichmeir et~al.(2020)Eichmeir, Lauß, Oberpeilsteiner, Nachbagauer,
  and Steiner]{EiLa2020}
Philipp Eichmeir, Thomas Lauß, Stefan Oberpeilsteiner, Karin Nachbagauer, and
  Wolfgang Steiner.
\newblock {The Adjoint Method for Time-Optimal Control Problems}.
\newblock \emph{Journal of Computational and Nonlinear Dynamics}, 16\penalty0
  (2):\penalty0 021003, 2020.
\newblock ISSN 1555-1415.
\newblock \doi{10.1115/1.4048808}.

\bibitem[Giles and Pierce(2000)]{GiPi2000}
M.~B. Giles and N.~A. Pierce.
\newblock An introduction to the adjoint approach to design.
\newblock \emph{Flow, Turbulence and Combustion}, 65:\penalty0 393--415, 2000.

\bibitem[Gunasekar et~al.(2021)Gunasekar, Woodworth, and
  Srebro]{mirrordescent3}
Suriya Gunasekar, Blake Woodworth, and Nathan Srebro.
\newblock Mirrorless mirror descent: A natural derivation of mirror descent.
\newblock In Arindam Banerjee and Kenji Fukumizu, editors, \emph{Proceedings of
  The 24th International Conference on Artificial Intelligence and Statistics},
  volume 130 of \emph{Proceedings of Machine Learning Research}, pages
  2305--2313. PMLR, 2021.

\bibitem[Gupta et~al.(2018)Gupta, Koren, and Singer]{gupta2018shampoo}
Vineet Gupta, Tomer Koren, and Yoram Singer.
\newblock Shampoo: Preconditioned stochastic tensor optimization.
\newblock In \emph{International Conference on Machine Learning}, pages
  1842--1850. PMLR, 2018.

\bibitem[Gustafson and Rao(1997)]{fov}
Karl~E. Gustafson and Duggirala K.~M. Rao.
\newblock \emph{Numerical Range: The Field of Values of Linear Operators and
  Matrices}.
\newblock Universitext. Springer New York, NY, 1997.
\newblock \doi{10.1007/978-1-4613-8498-4}.

\bibitem[Hairer et~al.(2006)Hairer, Wanner, and Lubich]{Hairer.2006}
Ernst Hairer, Gerhard Wanner, and Christian Lubich.
\newblock \emph{{Geometric Numerical Integration, Structure-Preserving
  Algorithms for Ordinary Differential Equations}}.
\newblock Springer Series in Computational Mathematics. Springer Berlin,
  Heidelberg, 2006.
\newblock \doi{10.1007/3-540-30666-8}.

\bibitem[Hauswirth et~al.(2016)Hauswirth, Bolognani, Hug, and Dörfler]{projGD}
Adrian Hauswirth, Saverio Bolognani, Gabriela Hug, and Florian Dörfler.
\newblock Projected gradient descent on {R}iemannian manifolds with
  applications to online power system optimization.
\newblock In \emph{2016 54th Annual Allerton Conference on Communication,
  Control, and Computing (Allerton)}, pages 225--232, 2016.
\newblock \doi{10.1109/ALLERTON.2016.7852234}.

\bibitem[Ibragimov(2006)]{Ib2006}
N.~H. Ibragimov.
\newblock Integrating factors, adjoint equations and {L}agrangians.
\newblock \emph{Journal of Mathematical Analysis and Applications},
  318\penalty0 (2):\penalty0 742--757, 2006.

\bibitem[Ibragimov(2007)]{Ib2007}
N.~H. Ibragimov.
\newblock A new conservation theorem.
\newblock \emph{J. Math. Anal. Appl.}, 333\penalty0 (1):\penalty0 311--328,
  2007.

\bibitem[Kaasschieter(1988)]{pcg}
E.F. Kaasschieter.
\newblock Preconditioned conjugate gradients for solving singular systems.
\newblock \emph{Journal of Computational and Applied Mathematics}, 24\penalty0
  (1):\penalty0 265--275, 1988.
\newblock \doi{10.1016/0377-0427(88)90358-5}.

\bibitem[Kennedy and Carpenter(2003)]{Kennedy.2003tv4}
Christopher~A. Kennedy and Mark~H. Carpenter.
\newblock {Additive {R}unge–{K}utta schemes for
  convection–diffusion–reaction equations}.
\newblock \emph{Applied Numerical Mathematics}, 44\penalty0 (1-2):\penalty0
  139--181, 2003.
\newblock ISSN 0168-9274.
\newblock \doi{10.1016/s0168-9274(02)00138-1}.

\bibitem[Kobayashi and Nomizu(1996)]{KoNo1996}
S.~Kobayashi and K.~Nomizu.
\newblock \emph{Foundations of Differential Geometry Volume I}.
\newblock Wiley Classics Library. Wiley, 1996.

\bibitem[Larsen(1988)]{Larsen.1988}
Edward Larsen.
\newblock {A grey transport acceleration method far time-dependent radiative
  transfer problems}.
\newblock \emph{Journal of Computational Physics}, 78\penalty0 (2):\penalty0
  459--480, 1988.
\newblock ISSN 0021-9991.
\newblock \doi{10.1016/0021-9991(88)90060-5}.

\bibitem[Li and Petzold(2004)]{LiPe2004}
S.~Li and L.~Petzold.
\newblock Adjoint sensitivity analysis for time-dependent partial differential
  equations with adaptive mesh refinement.
\newblock \emph{Journal of Computational Physics}, 198\penalty0 (1):\penalty0
  310--325, 2004.
\newblock \doi{10.1016/j.jcp.2003.01.001}.

\bibitem[Li(2018)]{precGD2}
Xi-Lin Li.
\newblock Preconditioned stochastic gradient descent.
\newblock \emph{IEEE Transactions on Neural Networks and Learning Systems},
  29\penalty0 (5):\penalty0 1454--1466, 2018.
\newblock \doi{10.1109/TNNLS.2017.2672978}.

\bibitem[Maddison et~al.(2021)Maddison, Paulin, Teh, and Doucet]{precGD1}
Chris~J. Maddison, Daniel Paulin, Yee~Whye Teh, and Arnaud Doucet.
\newblock Dual space preconditioning for gradient descent.
\newblock \emph{SIAM Journal on Optimization}, 31\penalty0 (1):\penalty0
  991--1016, 2021.
\newblock \doi{10.1137/19M130858X}.

\bibitem[Marsden and Ratiu(1999)]{MaRa1999}
J.~E. Marsden and T.~S. Ratiu.
\newblock \emph{Introduction to Mechanics and Symmetry}.
\newblock Texts in Applied Mathematics. Springer New York, NY, 2 edition, 1999.
\newblock \doi{10.1007/978-0-387-21792-5}.

\bibitem[Marsden and West(2001)]{MaWe2001}
{J. E.} Marsden and M.~West.
\newblock Discrete mechanics and variational integrators.
\newblock \emph{Acta Numer.}, 10:\penalty0 317--514, 2001.

\bibitem[Martens(2020)]{natgrad1}
James Martens.
\newblock New insights and perspectives on the natural gradient method.
\newblock \emph{J. Mach. Learn. Res.}, 21\penalty0 (1), 2020.
\newblock ISSN 1532-4435.

\bibitem[Martens and Grosse(2015)]{martens2015optimizing}
James Martens and Roger Grosse.
\newblock Optimizing neural networks with kronecker-factored approximate
  curvature.
\newblock In \emph{International conference on machine learning}, pages
  2408--2417. PMLR, 2015.

\bibitem[Matsubara et~al.(2023)Matsubara, Miyatake, and Yaguchi]{MaMiYa2023}
Takashi Matsubara, Yuto Miyatake, and Takaharu Yaguchi.
\newblock The symplectic adjoint method: Memory-efficient backpropagation of
  neural-network-based differential equations.
\newblock \emph{IEEE Transactions on Neural Networks and Learning Systems},
  pages 1--13, 2023.
\newblock \doi{10.1109/TNNLS.2023.3242345}.

\bibitem[mfem()]{mfem-web}
mfem.
\newblock {MFEM}: Modular finite element methods {[Software]}.
\newblock \url{mfem.org}.

\bibitem[Mihalas and Mihalas(1984)]{mihalas_1984}
Dimitri Mihalas and Barbara~Weibel Mihalas.
\newblock \emph{Foundations of Radiation Hydrodynamics}.
\newblock Oxford University Press, New York, 1984.
\newblock ISBN 0-19-503437-6.

\bibitem[Nguyen et~al.(2016)Nguyen, Georges, and Besan\c{c}on]{NgGeBe2016}
V.~T. Nguyen, D.~Georges, and G.~Besan\c{c}on.
\newblock State and parameter estimation in 1-{D} hyperbolic {PDEs} based on an
  adjoint method.
\newblock \emph{Automatica}, 67\penalty0 (C):\penalty0 185--191, May 2016.
\newblock ISSN 0005-1098.

\bibitem[Noack and Walther(2007)]{NoWa2007}
A.~Noack and A.~Walther.
\newblock Adjoint concepts for the optimal control of {B}urgers equations.
\newblock \emph{Comput Optim Applic}, 36:\penalty0 109--133, 2007.
\newblock \doi{10.1007/s10589-006-0393-7}.

\bibitem[Padilla and Von~Flotow(1992)]{PaVo92}
Carlos~E. Padilla and Andreas~H. Von~Flotow.
\newblock Nonlinear strain-displacement relations and flexible multibody
  dynamics.
\newblock \emph{Journal of Guidance, Control, and Dynamics}, 15\penalty0
  (1):\penalty0 128--136, 1992.
\newblock \doi{10.2514/3.20810}.

\bibitem[Pierce and Giles(2000)]{PiGi2000}
N.~A. Pierce and M.~B. Giles.
\newblock Adjoint recovery of superconvergent functionals from {PDE}
  approximations.
\newblock \emph{SIAM Rev.}, 42\penalty0 (2):\penalty0 247--264, 2000.

\bibitem[Raskutti and Mukherjee(2015)]{mirrordescent2}
Garvesh Raskutti and Sayan Mukherjee.
\newblock The information geometry of mirror descent.
\newblock \emph{IEEE Transactions on Information Theory}, 61\penalty0
  (3):\penalty0 1451--1457, 2015.
\newblock \doi{10.1109/TIT.2015.2388583}.

\bibitem[Ross(2005)]{Ro2005}
I.~M. Ross.
\newblock A roadmap for optimal control: The right way to commute.
\newblock \emph{Ann. NY Acad. Sci.}, 1065\penalty0 (1):\penalty0 210--231,
  2005.

\bibitem[Ruge and Stüben(1987)]{amg}
J.~W. Ruge and K.~Stüben.
\newblock \emph{Multigrid Methods Chapter 4: Algebraic Multigrid}, pages
  73--130.
\newblock Society for Industrial and Applied Mathematics, 1987.
\newblock \doi{10.1137/1.9781611971057.ch4}.

\bibitem[Sanders and Katopodes(2000)]{SaKa2000}
Brett~F. Sanders and Nikolaos~D. Katopodes.
\newblock Adjoint sensitivity analysis for shallow-water wave control.
\newblock \emph{Journal of Engineering Mechanics}, 126\penalty0 (9):\penalty0
  909--919, 2000.
\newblock \doi{10.1061/(ASCE)0733-9399(2000)126:9(909)}.

\bibitem[Sanz-Serna(2016)]{Sa2016}
J.~M. Sanz-Serna.
\newblock Symplectic {R}unge--{K}utta schemes for adjoint equations, automatic
  differentiation, optimal control, and more.
\newblock \emph{SIAM Review}, 58\penalty0 (1):\penalty0 3--33, 2016.

\bibitem[Tran and Leok(2024)]{TrLe2024}
B.~K. Tran and M.~Leok.
\newblock Geometric methods for adjoint systems.
\newblock \emph{J Nonlinear Sci}, 34\penalty0 (25), 2024.
\newblock \doi{10.1007/s00332-023-09999-7}.

\bibitem[Tran and Leok(2025)]{TrLe2025}
B.~K. Tran and M.~Leok.
\newblock A type {II} {H}amiltonian variational principle and adjoint systems
  for {L}ie groups.
\newblock \emph{J Dyn Control Syst}, 31\penalty0 (8), 2025.
\newblock \doi{10.1007/s10883-025-09730-7}.

\bibitem[Tran et~al.(2024)Tran, Southworth, and Leok]{TrSoLe2024}
B.~K. Tran, B.~S. Southworth, and M.~Leok.
\newblock On properties of adjoint systems for evolutionary {PDEs}.
\newblock \emph{J Nonlinear Sci}, 34\penalty0 (95), 2024.
\newblock \doi{10.1007/s00332-024-10071-1}.

\bibitem[Tran et~al.(2025)Tran, Southworth, and Buvoli]{nprk2}
B.~K. Tran, B.~S. Southworth, and T.~Buvoli.
\newblock Order conditions for nonlinearly partitioned {R}unge--{K}utta
  methods.
\newblock \emph{Electronic Transactions on Numerical Analysis}, 63:\penalty0
  171--198, 2025.
\newblock \doi{10.1553/etna_vol63s171}.

\bibitem[Völz and Graichen(2021)]{VoGr21}
Andreas Völz and Knut Graichen.
\newblock Gradient-based nonlinear model predictive control for systems with
  state-dependent mass matrix.
\newblock In \emph{2021 60th IEEE Conference on Decision and Control (CDC)},
  pages 1012--1017, 2021.
\newblock \doi{10.1109/CDC45484.2021.9683175}.

\bibitem[Yano and Ishihara(1973)]{YaIs1973}
K.~Yano and S.~Ishihara.
\newblock \emph{Tangent and cotangent bundles: differential geometry}.
\newblock Pure Appl. Math., No. 16. Marcel Dekker, Inc., New York, 1973.

\bibitem[Yao et~al.(2021)Yao, Gholami, Shen, Mustafa, Keutzer, and
  Mahoney]{yao2021adahessian}
Zhewei Yao, Amir Gholami, Sheng Shen, Mustafa Mustafa, Kurt Keutzer, and
  Michael Mahoney.
\newblock Adahessian: An adaptive second order optimizer for machine learning.
\newblock In \emph{proceedings of the AAAI conference on artificial
  intelligence}, volume~35, pages 10665--10673, 2021.

\bibitem[Zhang(2005)]{SchurComplement}
Fuzhen Zhang, editor.
\newblock \emph{{The Schur Complement and Its Applications}}.
\newblock Numerical Methods and Algorithms. Springer New York, NY, 2005.
\newblock \doi{10.1007/b105056}.

\end{thebibliography}

\end{document}